\documentclass[11pt]{article}

\usepackage[english]{babel}
\usepackage[a4paper,scale=0.75,hcentering]{geometry}
\raggedbottom


\addtolength{\footskip}{\baselineskip}
\usepackage[hyperfootnotes=false]{hyperref} 

\usepackage{setspace}

\hypersetup{
	colorlinks = true,
	linkcolor = {blue},
	urlcolor = {red},
	citecolor = {blue}
}
\usepackage{amsmath, amsthm, amssymb, mathrsfs, mathtools, enumitem}
\usepackage[x11names]{xcolor}

\hfuzz=10pt

\allowdisplaybreaks

\newcounter{results}[section] 

\theoremstyle{plain}
\newtheorem{theorem}[results]{Theorem}
\newtheorem{lemma}[results]{Lemma}
\newtheorem{proposition}[results]{Proposition}

\theoremstyle{remark}
\newtheorem{remark}[results]{Remark}

\theoremstyle{definition}
\newtheorem{definition}[results]{Definition}

\numberwithin{equation}{section}

\newcommand{\N}{\ensuremath{\mathbb N}} 
\newcommand{\R}{\ensuremath{\mathbb R}} 
\newcommand{\B}{\ensuremath{\mathbb B}} 
\newcommand{\F}{\ensuremath{\mathcal {F}}}
\DeclareMathOperator{\dis}{d} 

\makeatletter
\DeclarePairedDelimiter{\@tmpabs}{\lvert}{\rvert}
\newcommand{\@absstar}[1]{{\@tmpabs*{#1}}}
\newcommand{\@absnostar}[2][]{{\@tmpabs[#1]{#2}}}
\newcommand{\abs}{\@ifstar\@absstar\@absnostar}
\makeatother

\makeatletter
\DeclarePairedDelimiter{\@tmpnorm}{\lVert}{\rVert}
\newcommand{\@normstar}[1]{{\@tmpnorm*{#1}}}
\newcommand{\@normnostar}[2][]{{\@tmpnorm[#1]{#2}}}
\newcommand{\norm}{\@ifstar\@normstar\@normnostar}
\makeatother

\newcommand{\scalprod}[2]{\ensuremath{\langle #1, #2\rangle}}
\newcommand{\defeq}{\ensuremath{\coloneqq}}
\newcommand{\lapl}{\ensuremath{\Delta}}

\begin{document}
	
\title{\bf Sharp quantitative stability for the fractional Sobolev trace inequality\thanks{Supported by  National Key R\&D Program of China (Grant 2023YFA1010001) and NSFC(12171265 and 12271184).\quad\quad\quad\quad\quad\quad\quad\quad  E-mail addresses: zhangyf22@mails.tsinghua.edu.cn (Zhang),    zhou-yx22@mails.tsinghua.edu.cn(Zhou),    zou-wm@mail.tsinghua.edu.cn (Zou)}}
	
\author{{\bf Yingfang Zhang, Yuxuan Zhou and Wenming Zou}\\ \\ {\footnotesize \it  Department of Mathematical Sciences, Tsinghua University, Beijing 100084, China.} }

\date{}

\maketitle

\begin{abstract}
	In this paper, we study the stability of the fractional Sobolev trace inequality within both the functional and critical point settings.
	
	In the functional setting, we establish the following sharp estimate:
	\begin{equation*}
		C_{\mathrm{BE}}(n,m,\alpha)\inf_{v\in\mathcal{M}_{n,m,\alpha}}\norm{f-v}_{D_\alpha(\R^n)}^2 \le \norm{f}_{D_\alpha(\R^n)}^2 - S(n,m,\alpha) \norm{\tau_mf}_{L^{q}(\R^{n-m})}^2,
	\end{equation*}
	where $0\leq m< n$, $\frac{m}{2}<\alpha<\frac{n}{2}, q=\frac{2(n-m)}{n-2\alpha}$ and $\mathcal{M}_{n,m,\alpha}$ denotes the manifold of extremal functions. Additionally, we find an explicit bound for the stability constant $C_{\mathrm{BE}}$. Furthermore, we establish a compactness result ensuring the existence of minimizers for the Bianchi-Egnell type functional:
	\begin{equation*}
		S_{\mathrm{Tr}}(f)\coloneqq\frac{\norm{f}_{D_\alpha}^2 - S(n,m,\alpha)\norm{\tau_mf}_{L^q}^2}{\inf\limits_{v\in \mathcal{M}_{n,m,\alpha}} \norm{f-v}_{D_\alpha}^2},\quad \text{for }f\in D_\alpha(\R^n)\backslash\mathcal{M}_{n,m,\alpha}.
	\end{equation*}
Our stability results extend previous works on the Escobar trace inequality and fractional Sobolev inequality. As a corollary, we derive some improved trace inequalities for functions supported in general domains. Applying a  dual scheme, we also obtain a sharp a priori estimate for Neumann problem on the half-space.
	
	In the critical point setting, we investigate the validity of a sharp quantitative profile decomposition related to the Escobar trace inequality and  establish a qualitative profile decomposition for the critical elliptic equation
	\begin{equation*}
		\begin{cases}
			\Delta u= 0 & \text{in }\R_+^n \\
			\frac{\partial u}{\partial t}=-|u|^{\frac{2}{n-2}}u & \text{on }\partial\R_+^n.
		\end{cases}
	\end{equation*}
 We then derive the sharp stability estimate:
	\begin{equation*}
		C_{\mathrm{CP}}(n,\nu)d(u,\mathcal{M}_{\mathrm{E}}^{\nu})\leq \left\Vert \Delta u +|u|^{\frac{2}{n-2}}u\right\Vert_{H^{-1}(\R_+^n)},
	\end{equation*}
	where $\nu=1,n\geq 3$ or $\nu\geq2,n=3$ and $\mathcal{M}_{\mathrm{E}}^\nu$ represents the manifold consisting of $\nu$ weak-interacting Escobar bubbles. Through some refined estimates, we also give a strict upper bound for $C_{\mathrm{CP}}(n,1)$, which is $\frac{2}{n+2}$.
\end{abstract}

\newpage

\tableofcontents

\vskip1in

\section{Introduction}
	
\subsection{Background and motivations}
	
The classical Sobolev inequality with exponent $1<p<n$ states that, for any $n\geq 2$ and $u\in W^{1,p}(\R^n)$, it holds
	\begin{equation}\label{sobo}
		S_p\Vert u\Vert_{L^{p^*}}\leq \Vert\nabla u\Vert_{L^p},
	\end{equation}
	where $p^*=\frac{np}{n-p}$ and $S_p$ is the sharp constant. It is well known that the extremal functions of \eqref{sobo} (see \cite{Aubin, Talenti}) form an $(n+2)$-dimensional manifold:
	\begin{equation*}
		\mathcal{M}_p\coloneqq \Big\{ w_{a,b,x_{0}}\coloneqq \frac{a}{(1+b|x-x_{0}|^{\frac{p}{p-1}})^{\frac{n-p}{p}}}: a\in \R\setminus\{0\},\  b>0,\ x_{0}\in \mathbb{R}^n\Big\}.
	\end{equation*}
	When $p=2$ ,the Euler-Lagrange equation associated to the inequality \eqref{sobo} is, up to suitable normalization, given by
	\begin{equation}\label{eleqt}
		\Delta u+u|u|^{2^*-2}=0.
	\end{equation}
	It was shown in \cite{Gidas} that all the positive solutions of \eqref{eleqt} are Talenti bubbles:
	\begin{equation*}
		T[z,\lambda](x)\coloneqq(n(n-2))^{\frac{n-2}{4}}
		\frac{\lambda^{\frac{n-2}{2}}}{(1+\lambda^2\abs{x-z}^2)^{\frac{n-2}{2}}} ,\quad  \lambda>0,\  z\in \R^n.
	\end{equation*}
Once the rigidity results for \eqref{sobo} and \eqref{eleqt} are established, it is natural to consider stability versions. In the functional setting, a fundamental question arises --- does the deviation of a function from attaining equality in \eqref{sobo} control its distance from the extremal manifold $\mathcal{M}_p$? Similarly, in the critical point setting, does almost being a positive solution of \eqref{eleqt} imply almost being a Talenti bubble?
\vskip0.2in
The question on the stability of functional inequalities was first raised by Br\'ezis and Lieb  in \cite{Brezis-Lieb}. In their work, they improved the Sobolev embedding on bounded domains by introducing a remainder term, thereby raising an open problem about whether the homogeneous inequality \eqref{sobo} can be reinforced. A comprehensive answer was provided by Bianchi and Egnell in \cite{bianchi1991} for the case $p=2$, presenting a complete result as follows:
	\begin{equation}\label{BE}
		C_{\mathrm{BE}}\inf_{z\in\R^n,\lambda>0,\alpha\in\R}\|\nabla(u-\alpha U[z,\lambda])\|_{L^2}^2\leq \|\nabla u\|_{L^2}^2-S_2^2\|u\|^2_{L^{2^*}}.
	\end{equation}
	The result is considered complete because both the exponent $2$ and the norm in the left hand side are sharp. For the case $p\neq 2$, due to the absence of Hilbertian structure, the stability of \eqref{sobo} remained an open problem for a long time and was recently solved by Figalli and Zhang in \cite{figalli-Zhang1} (see \cite{cianchi-fusco-maggi-pratelli,figalli-maggi-Pratelli-2013,figalli-Zhang1, neumayer3, figalli-Neumayer} and the references therein for earlier works).
\vskip0.2in	
Besides the classical Sobolev inequality, there are many other important geometric and functional inequalities with well studied stability. Examples include the fractional Sobolev inequality \cite{bartsch, chen-frank}, the Sobolev trace inequality \cite{Ho, Chenlu}, the isoperimetric inequality \cite{fusco-maggi-pratelli, figalli-Maggi-Pratelli-iso, maggi, neumayer1, cicalese-Leonardi, neumayer2, figalli-fusco}, the Hardy-Littlewood-Sobolev inequality \cite{figalli-Carlen, carlen1}, the Gagliardo-Nirenberg-Sobolev inequality \cite{figalli-Carlen, dolbeault-Toscani, ruffini, bonforte1, bonforte2} and the Caffarelli-Kohn-Nirenberg inequality \cite{Wu-Wei,De,Do, Frank2,zho1}.
	
\vskip0.2in
	Once a complete stability result is established, a natural and interesting question is to quantify the stability. For example, what can be said about the constant $C_{\mathrm{BE}}$ in \eqref{BE}? Traditional stability results of Sobolev-type inequalities rely heavily on local spectrum analysis and global concentration-compactness principle, which offer limited information about the constants involved. Recent advancements by Dolbeault, Esteban, Figalli, Frank and Loss \cite{dol-fig} stated shedding light on explicit lower bound of $C_{\mathrm{BE}}$ using competing symmetries and continuous Steiner symmetrization. The subsequent work by K\"{o}nig in \cite{konig1, konig3} gave an upper bound of $C_{\mathrm{BE}}$ for general fractional Sobolev inequality, and showed that $C_{\mathrm{BE}}$ can be attained by certain functions. In a parallel vein, Chen, Lu and Tang \cite{Chenlu} gave a lower bound for stability constants of the Hardy-Littlewood-Sobolev inequality, the fractional Sobolev inequality and some trace inequalities. Wei, Wu \cite{Wu-Wei2} and Deng, Tian \cite{Deng1} established analogous results for the Caffarelli-Kohn-Nirenberg inequality.
\vskip0.2in
	The question on the stability of critical points traces back to the celebrated global compactness principle of Struwe \cite{struwe1984}. This principle states that a bounded nonnegative sequence $\{u_n\}$ satisfying $\left\Vert\Delta u+u|u|^{2^*-2}\right\Vert_{H^{-1}(\R^n)}\rightarrow 0$ can be approximated by several Talenti bubbles up to a subsequence. Ciraolo, Figalli and Maggi \cite{9} provided the first quantitative version of this principle, yielding an optimal linear estimate when dealing with a single bubble:
	\begin{equation*}
		C\dis(u,\mathcal{M}_T)\leq \left\Vert \Delta u +|u|^{2^*-2}u\right\Vert_{H^{-1}(\R^n)},
	\end{equation*}
	where $\mathcal{M}_T$ is the manifold of Talenti bubbles. When there are more bubbles, it becomes complicated due to the interaction of distinct bubbles. Figalli and Glaudo \cite{figalli} tackled such case and found an interesting phenomenon: linear estimates were possible for dimension $3\leq n\leq 5$, but for $n>5$, counter-examples were constructed. The remaining case $n>5$ was ultimately solved by Deng, Sun and Wei in \cite{dsw} using finite dimensional reduction method, highlighting that logarithmic ($n=6$) or sublinear ($n>6$) estimates are sharp in high dimensions.
\vskip0.2in	
	In addition to the classical Yamabe equation \eqref{eleqt}, the stability of several inequality-related critical equations have been extensively studied, such as the fractional Sobolev inequality \cite{Ary,Che,konig2}, the Caffarelli-Kohn-Nirenberg inequality \cite{Wu-Wei,zho2} and the Hardy-Littlewood-Sobolev inequality \cite{liu, yang, Lu,Yan}.
\vskip0.2in	
	As in the functional setting, a natural question is to study the optimal stability constant for critical equations. However, this question seems much harder, for the possible reason that in general, dual norms do not behave as well as Sobolev norms. Standard tools such as expansions and Br\'ezis-Lieb type arguments, which are useful in functional settings, are difficult to handle dual norms. To our best knowledge, the first in this direction was given by De Nitti and K\"{o}nig in \cite{konig2}, which found an explicit upper bound for the stability constant of the fractional Yamabe equation in one bubble case by treating the dual norm carefully and using a third-order expansion.
	
	\subsection{Problem setup and main results}
	
	In this paper, we focus on the following sharp fractional Sobolev trace inequality given by Einav and Loss in \cite{fractr}:
	\begin{equation}\label{trtr}
		S(n,m,\alpha)\norm{\tau_mf}_{L^{\frac{2(n-m)}{n-2\alpha}}(\R^{n-m})}^2 \le \norm{f}_{D_\alpha(\R^n)}^2,
	\end{equation}
	where $0\leq m<n$, $\frac{m}{2}<\alpha<\frac{n}{2}$, $f\in D_\alpha(\R^n)$ and $\tau_mf$ is the restriction of $f$ on $\R^{n-m}$. The space $D_\alpha(\R^n)$ is the standard fractional Sobolev space in $\R^n$. It is the closure of $C_c^{\infty}(\R^n)$ under the norm:
	\begin{equation*}
		\norm{f}_{D_\alpha(\R^n)} := \left(\int_{\R^n} \abs{\hat f(k)}^2 \abs{2\pi k}^{2\alpha}\,dk\right)^{1/2}.
	\end{equation*}
	The sharp constant $S(n,m,\alpha)$ and the extremal manifold $\mathcal{M}_{n,m,\alpha}$ are explicit and given by
	\begin{equation}\label{snma}
		S(n,m,\alpha) = 2^{2\alpha}\pi^{\alpha}\frac{\Gamma(\alpha)\Gamma(n/2+\alpha-m)}{\Gamma(n/2-\alpha)\Gamma(\alpha-m/2)}\left(\frac{\Gamma(n-m)}{\Gamma((n-m)/2)}\right)^{\frac{m-2\alpha}{n-m}},
	\end{equation}
	\begin{equation*}
		\mathcal{M}_{n,m,\alpha}=\left\{\lambda\int_{\R^{n-m}} \frac{1}{(|x'|^2+|x''-y''|^2)^{(n-2\alpha)/2}}\frac{1}{(\gamma^2+|y''-a|^2)^{(n+2\alpha-2m)/2}}\,dy''\right\},
	\end{equation*}
where $\lambda\in\R,\gamma>0,a\in\R^{n-m},x'\in\R^m,x''\in\R^{n-m}$.
\vskip0.2in
\begin{remark}
     It is worth mentioning that the restriction $\frac{m}{2}<\alpha<\frac{n}{2}$ arises naturally from the proof of \eqref{trtr}. As shown in  \cite{fractr}, the inequality \eqref{trtr} can be viewed as a composition of two inequalities: the reduction inequality $\tau_m: D_\alpha(\R^n) \to D_{\alpha-m/2}(\R^{n-m})$ (see Theorem \ref{thm: ineq2}) and the fractional Sobolev inequality $D_{\alpha-m/2}(\R^{n-m})\hookrightarrow L^{\frac{2(n-m)}{n-2\alpha}}(\R^{n-m})$ (see Theorem \ref{thm: ineq1}). For the first inequality to hold, we need $\alpha>\frac{m}{2}$. For the second inequality to hold, we need $\alpha<\frac{n}{2}$.
\end{remark}
\vskip0.2in

When $m=0$ and $0<\alpha<\frac{n}{2}$, \eqref{trtr} reduces to the classical fractional Sobolev inequality:
\begin{align}\label{ppp1}
    S(n,0,\alpha)\norm{\tau_mf}_{L^{\frac{2n}{n-2\alpha}}(\R^{n})}^2 \le \norm{f}_{D_\alpha(\R^n)}^2.
\end{align}

When $m=\alpha=1$, \eqref{trtr} is equivalent to the Escobar trace inequality in \cite{Escobar}:
	\begin{equation}\label{escobar}
		S_{\mathrm{E}}(n)\norm{f}_{L^{\frac{2(n-1)}{n-2}}(\R^{n-1})}^2 \le \norm{f}_{H^1(\R_+^n)}^2,
	\end{equation}
	where $S_{\mathrm{E}}(n)=S(n,1,1)$ is the sharp constant.

\vskip0.1in
There are various quantitative stability results on these two inequalities. For the fractional Sobolev inequality \eqref{ppp1}, we refer to the works \cite{bianchi1991,bartsch,chen-frank}. Regarding the Escobar trace inequality \eqref{escobar}, we refer to \cite{Ho}. Recently, we observed that in \cite{Chenlu}, the authors obtained quantitative stability estimates for the inequality \eqref{trtr} in the case $m=1$ and $\frac{1}{2}<\alpha<\frac{n}{2}$, thereby generalizing the results in \cite{Ho}. Their approach relies on the dual scheme developed by Carlen in \cite{carlen1}, which is fundamentally different from the methods we employ in this paper.
\vskip0.1in
In this paper, we consider the stability of \eqref{trtr} in both functional and critical point settings.
\vskip0.26in	

{\bf $\diamondsuit$ In the functional setting,}  we prove the following sharp estimate:
	\begin{theorem}\label{main thm1}
		Let $0\le m<n$ and $\frac{m}{2} < \alpha < \frac n2$. Then there exists a constant $C=C(n,m,\alpha)$ such that for any $f\in D_\alpha(\R^n)$, we have
		\begin{equation}\label{fractional-trace stability}
			C\norm{f-v}_{D_\alpha(\R^n)}^2 \le \norm{f}_{D_\alpha(\R^n)}^2 - S(n,m,\alpha) \norm{\tau_mf}_{L^{\frac{2(n-m)}{n-2\alpha}}(\R^{n-m})}^2
		\end{equation}
		for some $v\in \mathcal{M}_{n,m,\alpha}$. We denote $C_{\mathrm{BE}}(n,m,\alpha)$ to be the best stability constant $C$ above.
	\end{theorem}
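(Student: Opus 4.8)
My primary plan is to \emph{decouple} the trace variable from the bulk. For $f\in D_\alpha(\R^n)$ put $g\defeq\tau_mf$; then $g\in\dot H^{s}(\R^{n-m})$ with $s\defeq\alpha-\tfrac m2\in(0,\tfrac{n-m}2)$, and $q=\tfrac{2(n-m)}{n-2\alpha}$ is exactly the $\dot H^s(\R^{n-m})$-critical exponent. Let $Eg\in D_\alpha(\R^n)$ be the energy-minimising extension of $g$, i.e.\ $\norm{Eg}_{D_\alpha}=\min\{\norm{h}_{D_\alpha}:\tau_mh=g\}$. First-order optimality gives $\scalprod{Eg}{h}_{D_\alpha}=0$ whenever $\tau_mh=0$, hence the two Pythagorean identities
\[
\norm{f}_{D_\alpha}^2=\norm{Eg}_{D_\alpha}^2+\norm{f-Eg}_{D_\alpha}^2,\qquad
\norm{f-v}_{D_\alpha}^2=\norm{f-Eg}_{D_\alpha}^2+\norm{Eg-v}_{D_\alpha}^2
\]
for every $v\in\mathcal{M}_{n,m,\alpha}$, the second because reading off the explicit formula one checks that $\mathcal{M}_{n,m,\alpha}=E(\Sigma)$, where $\Sigma$ is the Aubin--Talenti extremal manifold of the fractional Sobolev inequality $\dot H^s(\R^{n-m})\hookrightarrow L^q(\R^{n-m})$. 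By the computation underlying \cite{fractr} there is a constant $c_{n,m,\alpha}>0$ with $\norm{Eg}_{D_\alpha(\R^n)}^2=c_{n,m,\alpha}\norm{g}_{\dot H^s(\R^{n-m})}^2$; in particular $\sigma\defeq S(n,m,\alpha)/c_{n,m,\alpha}$ is the sharp constant in $\sigma\norm{g}_{L^q}^2\le\norm{g}_{\dot H^s}^2$ with extremal set $\Sigma$, and the trace deficit splits as
\[
\norm{f}_{D_\alpha}^2-S(n,m,\alpha)\norm{\tau_mf}_{L^q}^2=c_{n,m,\alpha}\bigl(\norm{g}_{\dot H^s}^2-\sigma\norm{g}_{L^q}^2\bigr)+\norm{f-Eg}_{D_\alpha}^2 .
\]

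Given this split there are two routes to \eqref{fractional-trace stability}. If the Bianchi--Egnell stability of the fractional Sobolev inequality on $\R^{n-m}$ is available for the relevant $s=\alpha-\tfrac m2$ (\cite{chen-frank,bartsch,Chenlu}), the first bracket is at least $C_{\mathrm{BE}}^{\mathrm{Sob}}\inf_{\phi\in\Sigma}\norm{g-\phi}_{\dot H^s}^2=C_{\mathrm{BE}}^{\mathrm{Sob}}c_{n,m,\alpha}^{-1}\inf_{v\in\mathcal{M}_{n,m,\alpha}}\norm{Eg-v}_{D_\alpha}^2$, and inserting this together with the two Pythagorean identities into the display gives \eqref{fractional-trace stability} with $C=\min\{C_{\mathrm{BE}}^{\mathrm{Sob}},1\}$. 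To cover the full range $\tfrac m2<\alpha<\tfrac n2$ self-containedly I would instead run the Bianchi--Egnell contradiction argument \cite{bianchi1991} for \eqref{trtr} directly: assume $C_{\mathrm{BE}}(n,m,\alpha)=0$, pick a normalised sequence $f_k$ with vanishing relative deficit, invoke the concentration--compactness/profile decomposition attached to \eqref{trtr} to reduce (after applying the symmetries) to $f_k=W+r_k$ with $W$ a fixed bubble, $\norm{r_k}_{D_\alpha}\to0$ and $r_k\perp T_W\mathcal{M}_{n,m,\alpha}$, Taylor-expand to $\norm{f_k}_{D_\alpha}^2-S(n,m,\alpha)\norm{\tau_mf_k}_{L^q}^2=\mathcal{Q}(r_k)+o(\norm{r_k}_{D_\alpha}^2)$ with $\mathcal{Q}$ the second variation of the deficit at $W$, and contradict $C_{\mathrm{BE}}=0$ via a spectral gap $\mathcal{Q}(r)\ge c(n,m,\alpha)\norm{r}_{D_\alpha}^2$ on $(T_W\mathcal{M}_{n,m,\alpha})^\perp$. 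That spectral gap is again obtained by the same decoupling: writing $r=E(\tau_mr)+h$ with $\tau_mh=0$ one has $\norm{r}_{D_\alpha}^2=\norm{E(\tau_mr)}_{D_\alpha}^2+\norm{h}_{D_\alpha}^2$, the equivalence $r\perp T_W\mathcal{M}_{n,m,\alpha}\iff\tau_mr\perp T_w\Sigma$ in $\dot H^s$ (with $w=\tau_mW$), and the $h$-part only enlarges $\norm{r}_{D_\alpha}^2$; so $\mathcal{Q}(r)\ge\min\{1,c_{\mathrm{Sob}}\}\norm{r}_{D_\alpha}^2$ as soon as the fractional Sobolev second variation $\mathcal{Q}_{\mathrm{Sob}}$ at $w$ is coercive on $(T_w\Sigma)^\perp$. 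The latter is the classical computation: stereographic projection to $S^{n-m}$ turns the problem into the eigenvalue problem for the fractional conformal Laplacian $P_s$ on $S^{n-m}$, whose eigenvalue on degree-$\ell$ spherical harmonics is explicit; $\mathcal{Q}_{\mathrm{Sob}}$ vanishes exactly on the span of the $\ell\in\{0,1\}$ harmonics (the $n-m+2$ symmetry directions spanning $T_w\Sigma$) and is uniformly positive on $\ell\ge2$, giving $c_{\mathrm{Sob}}>0$.

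The crux of the argument is therefore the structural input rather than the soft analysis. I expect the main obstacle to be establishing rigorously that $E$ — via the Einav--Loss identity $\norm{Eg}_{D_\alpha(\R^n)}^2=c_{n,m,\alpha}\norm{g}_{\dot H^s(\R^{n-m})}^2$ — is an isometry (up to the fixed constant) onto a closed subspace of $D_\alpha(\R^n)$, that $E$ maps $\Sigma$ \emph{onto} $\mathcal{M}_{n,m,\alpha}$ and $T_w\Sigma$ \emph{onto} $T_W\mathcal{M}_{n,m,\alpha}$, and that the sharp eigenvalue list of the fractional conformal Laplacian on $S^{n-m}$ together with non-degeneracy of the Sobolev extremal holds in the stated generality; these feed both routes above. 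The remaining ingredients — existence and uniqueness of the $D_\alpha$-nearest point on $\mathcal{M}_{n,m,\alpha}$ for $f$ in a tubular neighbourhood, uniform control of the $o(\norm{r}^2)$ remainder using uniform convexity of $\norm{\cdot}_{L^q}$ for $q>2$, and the profile decomposition for \eqref{trtr} — are routine adaptations of the standard Bianchi--Egnell and concentration--compactness machinery.
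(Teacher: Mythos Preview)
Your Route 1 is correct and is essentially the paper's proof: the energy-minimising extension $Eg$ is the paper's explicitly constructed $g$ (built on the Fourier side from the equality case of the Einav--Loss reduction principle), your two Pythagorean identities are the paper's orthogonality computations, and both conclude by invoking the fractional Sobolev stability of Chen--Frank--Weth \cite{chen-frank}, arriving at the same constant $C=C_{\mathrm{BE}}(n-m,0,\alpha-\tfrac m2)$. Route 2 is unnecessary, since \cite{chen-frank} already covers the full range $0<s<\tfrac{n-m}{2}$ that arises here.
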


\vskip0.1in
The inequality 	\eqref{fractional-trace stability} is sharp in the following two key aspects. First, we use the norm $\norm{\cdot}_{D_\alpha(\R^n)}$ to quantify the distance between $f$ and $v$, which is evidently optimal. Second, the exponent $2$ of $\norm{f-v}_{D_\alpha(\R^n)}$ is also sharp in the following sense: there exists a sequence $\{f_i\}_i\subset D_\alpha(\R^n)$ such that $f_i\notin \mathcal{M}_{n,m,\alpha}$, $\inf\limits_{v\in \mathcal{M}_{n,m,\alpha}}\norm{f_i-v}_{D_\alpha(\R^n)}\rightarrow 0$, and for any $\gamma<2$,
\begin{align}\label{bbb2}
    \frac{\norm{f_i}_{D_\alpha(\R^n)}^\gamma - S(n,m,\alpha)^\frac{\gamma}{2}\norm{\tau_mf_i}_{L^{\frac{2(n-m)}{n-2\alpha}}(\R^{n-m})}^\gamma}{\inf\limits_{v\in \mathcal{M}_{n,m,\alpha}}\norm{f_i-v}^\gamma_{D_\alpha(\R^n)}}\rightarrow 0.
\end{align}
The construction of the sequence $\{f_i\}$ is detailed in Remark \ref{bbb1}.

A crucial method employed in proving \eqref{fractional-trace stability} is a reduction principle (see Theorem \ref{thm: ineq2}) proved in \cite{fractr}. This reduction allows us to connect \eqref{trtr} with standard fractional Sobolev inequality.
	Precisely, we show that the stability of \eqref{trtr} can be reduced to that of the fractional Sobolev inequality.

\vskip0.1in
	With the establishment of \eqref{fractional-trace stability}, we are interested in the following minimization problem:
	\begin{equation}\label{mini}
		C_{\mathrm{BE}}(n,m,\alpha):=\inf_{f\in D_\alpha(\R^n)\backslash\mathcal{M}_{n,m,\alpha}}S_{\mathrm{Tr}}(f),
	\end{equation}
	where
	\begin{equation*}
		S_{\mathrm{Tr}}(f)\coloneqq\frac{\norm{f}_{D_\alpha(\R^n)}^2 - S(n,m,\alpha)\norm{\tau_mf}_{L^{\frac{2(n-m)}{n-2\alpha}}(\R^{n-m})}^2}{\dis^2(f,\mathcal{M}_{n,m,\alpha})}.
	\end{equation*}
    Here $\dis(f,\mathcal{M}_{n,m,\alpha})$ denotes $\inf\limits_{v\in \mathcal{M}_{n,m,\alpha}}\norm{f_i-v}_{D_\alpha(\R^n)}$.
We prove the following theorem.
\vskip0.1in

\begin{theorem}\label{main+++} Under the same assumptions of Theorem \ref{main thm1}, we have
\begin{itemize}
\item [(1)] $C_{\mathrm{BE}}(n,m,\alpha)=C_{\mathrm{BE}}(n-m,0,\alpha-\frac{m}{2})$;
\item [(2)] it has the following upper-  and lower-bound estimates:
\begin{equation*}
C_{\mathrm{BE}}(n,m,\alpha)
\begin{cases}
				<\min\left\{\frac{4\alpha-2m}{n+2\alpha+2-2m},2-2^{\frac{n-2\alpha}{n-m}}\right\},& \hbox{ if }   n-m\geq 2,\\
				\leq \frac{4\alpha-2m}{2\alpha+3-m},&\hbox{ if }   n-m=1,
			\end{cases}
		\end{equation*}
 and
\begin{equation*}
C_{\mathrm{BE}}(n,m,\alpha)\geq	\frac{\min\{K_{n-m,\alpha-m/2},1,2^{\frac{n+2\alpha-2m}{n-m}}-2\}}{4}.
\end{equation*}
Here $K_{\cdot,\cdot}$ is a complicated number with an explicit expression (see \cite[Theorem 1.2]{Chenlu}).
\item [(3)] when $n-m\geq2$, $C_{\mathrm{BE}}(n,m,\alpha)$ is attainable at  some minimizer   $u\in D_\alpha(\R^n)\backslash\mathcal{M}_{n,m,\alpha}$. In fact, any minimizing sequence contains a subsequence that converges to some minimizer.

\end{itemize}
\end{theorem}
\vskip0.15in
\begin{remark}
    When $n-m=1$, it remains an open and interesting problem whether $C_{\mathrm{BE}}(n,m,\alpha)$ is attainable and whether $C_{\mathrm{BE}}(n,m,\alpha)<\frac{4\alpha-2m}{2\alpha+3-m}$. In fact, based on our proof, to address this open problem, it suffices to consider the case $n=1$, $m=0$ and $0<\alpha<\frac{1}{2}$, which corresponds to the fractional Sobolev inequality in one dimension. It has been observed in \cite{konig3,konig1} that the fractional Sobolev inequality in one dimension may behave very differently from the fractional Sobolev inequalities in higher dimensions. We refer readers to the interesting paper \cite{konig4}, in which by estimating suitable test functions, K\"{o}nig provided several pieces of evidence to support the following \emph{Conjecture}: $C_{\mathrm{BE}}(1,0,\alpha)$ is not attainable and  $C_{\mathrm{BE}}(1,0,\alpha)<\frac{4\alpha}{2\alpha+3}$.
\end{remark}
\vskip0.15in
\begin{remark}
    From Theorem \ref{main+++}, to estimate $C_{\mathrm{BE}}(n,m,\alpha)$, it suffices to estimate $C_{\mathrm{BE}}(n-m,0,\alpha-\frac{m}{2})$. The lower bound we present in Theorem \ref{main+++} \emph{(2)} is derived from  \cite{Chenlu}. After completing this paper, we learned that in \cite{Chenlu1,Chenlu2}, Chen, Lu, and Tang improved their previous results in \cite{Chenlu}. Specifically, they showed that for any fixed $s>0$, their exists an explicit positive constant $\beta_s$ such that for any $n>2s$,
    \begin{align}
        C_{\mathrm{BE}}(n,0,s)\geq \frac{\beta_s}{n}.\nonumber
    \end{align}
    Moreover, when $0<s<\frac{1}{6}$, one can express $\beta_s=s\beta$, where $\beta$ is an explicit positive constant independent of $n$ and $s$. We refer readers to \cite[Theorem 1.1, Remark 1.2]{Chenlu1} and \cite[Theorem 1.2]{Chenlu2} for details. Based on their results and our Theorem \ref{main+++} \emph{(1)}, we can derive that
    \begin{align}
        C_{\mathrm{BE}}(n,m,\alpha)=C_{\mathrm{BE}}(n-m,0,\alpha-\frac{m}{2})\geq	\frac{\beta_{\alpha-\frac{m}{2}}}{n-m},\nonumber
    \end{align}
    and when $0<\alpha-m/2<\frac{1}{6}$, we have $\beta_{\alpha-\frac{m}{2}}=(\alpha-\frac{m}{2})\beta$. This lower bound is better than the lower bound we present in Theorem \ref{main+++} \emph{(2)} in the following sense: when $\alpha-\frac{m}{2}$ is fixed and $n-m$ is large, or when $\alpha-\frac{m}{2}$ approaches $0$, the lower bound is always comparable to the upper bound.  
\end{remark}
\vskip0.15in

	Based on the gradient stability result \eqref{fractional-trace stability}, we derive enhanced Sobolev embeddings for general domains. Specifically, we can add a remainder term equipped with some weak $L^q$-norm $\Vert \cdot\Vert_{L_w^q}$ (see Theorem \ref{cons1} and Theorem \ref{cons3}). Such type results were initially established by Br\'ezis and Lieb in \cite{Brezis-Lieb}. Later, Bianchi and Egnell \cite{bianchi1991} presented an alternative proof using gradient stability. Chen, Frank and Weth \cite{{chen-frank}} generalized their results to the fractional Sobolev inequality. We follow similar ideas in our proofs.
\vskip0.1in
	Another corollary of \eqref{fractional-trace stability} is a stability estimate of a sharp a priori estimate for the Neumann problem:
	\begin{equation}\label{dual0}
		\begin{cases}
			\Delta \mathcal{P}[u]= 0 & \text{in }\R_+^n, \\
			\frac{\partial \mathcal{P}[u]}{\partial t}=-u & \text{on }\partial\R_+^n,
		\end{cases}
	\end{equation}
	where $\R_+^n=\left\{(x,t)|x\in\R^{n-1},t>0\right\}$. Given any $u\in L^{\frac{2n-2}{n}}(\mathbb{R}^{n-1})$, we have
	\begin{equation}\label{dual1}
		\Vert u\Vert_{L^{\frac{2n-2}{n}}(\mathbb{R}^{n-1})}^2- \frac{4}{S(n,1,1)}\left\Vert \nabla \mathcal{P}[u]\right\Vert_{L^2(\mathbb{R}_{+}^n)}^2\geq C\inf_{v\in \mathcal{M}_{\mathrm{Neu}}}\Vert u-v\Vert_{L^{\frac{2n-2}{n}}(\mathbb{R}^{n-1})}^2.
	\end{equation}
	Here $\frac{4}{S(n,1,1)}$ is the sharp constant (see \eqref{snma}) and $\mathcal{M}_{\mathrm{Neu}}$ is the extremal manifold (see Theorem \ref{cons4} in the present  paper). The proof of \eqref{dual1} relies on a dual scheme developed by Carlen in \cite{carlen1}. Precisely, we show that this a priori estimate is the duality of the Escobar trace inequality \eqref{escobar}, \emph{i.e.} the inequality \eqref{trtr}
	with $m=\alpha=1$.

\vskip0.31in

{\bf $\diamondsuit$  In the critical point setting,}  we consider the Euler-Lagrange equation associated with the Escobar trace inequality \eqref{escobar}:
	\begin{equation}\label{eqeqeq}
		\begin{cases}
			\Delta u= 0 & \text{in }\R_+^n, \\
			\frac{\partial u}{\partial t}= -|u|^{\frac{2}{n-2}}u & \text{on }\partial\R_+^n,
		\end{cases}
	\end{equation}
	It was shown by Ou in \cite{ou} that any nontrivial nonnegative solution of \eqref{eqeqeq} is an Escobar bubble:
	\begin{equation}\label{esbu=1}
		U[z,\lambda](x,t)=(n-2)^{\frac{n-2}{2}}\left(\frac{\lambda}{(1+\lambda t)^2+\lambda^2 |x-z|^2)}\right)^{\frac{n-2}{2}}
	\end{equation}
	for some $z\in\R^{n-1},\lambda>0$. Motivated by Struwe's work, we initially establish a Struwe-type profile decomposition for \eqref{eqeqeq} (see Theorem \ref{ql}). Specifically, we demonstrate that any almost solution of \eqref{eqeqeq} can be approximated by multiple exact solutions. Furthermore, we present a nonnegative counterpart of this result  (see Theorem \ref{ql1}): every nonnegative almost solution of \eqref{eqeqeq} can be approximated by several Escobar bubbles. The transition from the general version to the nonnegative version requires a Br\'ezis-Lieb type lemma (see Lemma \ref{re3}), which was previously used in \cite{carlo}. Following ideas of Figalli and Glaudo in \cite{figalli}, we derive the following optimal quantitative profile decomposition:

	\begin{theorem}\label{thm:main_close}
		For $n = 3,\nu\in\N_+$ or $n\geq 3,\nu=1$, there exist a small constant $\delta=\delta(n,\nu)>0$ and
		a large constant $C=C(n,\nu)>0$ such that the following statement holds.
		Let $u\in H^1(\R^n_+)$ be a function such that
		\begin{equation}\label{condition_1}
			\norm*{\nabla u - \sum\limits_{i=1}^{\nu}\nabla \tilde U_i}_{L^2(\R_+^n)} \le \delta ,
		\end{equation}
		where $(\tilde U_i)_{1\le i\le \nu}$ is a $\delta$-interacting family of Escobar bubbles (see Definition \ref{interaction!} below).
		Then there exist $\nu$ Escobar bubbles $U_1,U_2,\dots, U_\nu$  such that
		\begin{equation*}
			\norm*{\nabla u - \sum\limits_{i=1}^{\nu}\nabla U_i}_{L^2(\R_+^n)} \le C\norm{\lapl u+\abs{u}^{p-1}u}_{H^{-1}},
		\end{equation*}
		where $p=\frac{n}{n-2}$. Furthermore, for any $i\not= j$, the interaction between the bubbles can be estimated as
		\begin{equation}\label{eq:interaction_estimate_statement}
			\int_{\R^n}U_i^pU_j \le C\norm{\lapl u+\abs{u}^{p-1}u}_{H^{-1}} .
		\end{equation}
	\end{theorem}

\vskip0.3in
The term in the right-hand side of \eqref{eq:interaction_estimate_statement} is defined by
	\begin{equation}\label{dualnorm}
		\norm{\lapl u+\abs{u}^{\frac{2}{n-2}}u}_{H^{-1}(\R_+^n)}=\sup_{v\in H^1(\R_+^n)}\frac{\int_{\R_+^n}\nabla u\cdot \nabla v-\int_{\R^{n-1}}|u|^{\frac{2}{n-2}}uv}{\norm{v}_{H^1(\R_+^n)}},
	\end{equation}
	and the weak-interaction is defined as follows.
	\begin{definition}[interaction of Escobar bubbles]\label{interaction!}
		Let $U_1=U[z_1,\lambda_1]$,...,$U_\nu=U[z_\nu,\lambda_\nu]$ be a family of Escobar bubbles (see \eqref{esbu=1}). We say that the family is $\delta$-interacting for some $\delta >0$ if
		\begin{equation}\label{interaction}
			\mu_{ij}\defeq\min\left(\frac{\lambda_i}{\lambda_j},\frac{\lambda_j}{\lambda_i},\frac{1}{\lambda_i\lambda_j |z_i-z_j|^2}\right)\leq \delta,\quad \forall\, 1\le i\neq j\le \nu.
		\end{equation}
		Moreover, we say the family with some positive coefficients $\alpha_1,\dots,\alpha_\nu$ is $\delta$-interacting, if \eqref{interaction} holds and
		\begin{equation*}
			\max_{1\le i\le \nu} \abs{\alpha_i-1} \le \delta.
		\end{equation*}
	\end{definition}
	Thanks to qualitative results, the weak-interacting assumption \eqref{condition_1} in Theorem \ref{thm:main_close} can be substituted with nonnegativity. Specifically, we have the following stability result for nonnegative functions.
	\begin{theorem}\label{them4}
		Assume $n\geq 3,\nu=1$ or $n=3,\nu\geq 2$. Set $p=\frac{n}{n-2}$. Then there exists a constant $C=C(n,\nu)$ such that, for any nonnegative function $u\in H^1(\R^n_+)$ with
		\begin{equation}\label{rty}
			(\nu-\frac{1}{2})S_{\mathrm{E}}(n)^{n-1}\leq \int_{\R^n_+}|\nabla u|^2\leq(\nu+\frac{1}{2})S_{\mathrm{E}}(n)^{n-1},
		\end{equation}
		there exist $\nu$ Escobar bubbles $U_1,U_2,\dots, U_\nu$ such that
		\begin{equation*}
			\norm*{\nabla u - \sum\limits_{i=1}^{\nu}\nabla U_i}_{L^2(\R_+^n)} \le C\norm{\lapl u+u^p}_{H^{-1}}.
		\end{equation*}
		Furthermore, for any $i\not= j$, the interaction between the bubbles can be estimated as
		\begin{equation*}
			\int_{\R^{n}}U_i^pU_j \le C\norm{\lapl u+u^p}_{H^{-1}} .
		\end{equation*}
 In particular, the  linear decay is the sharp.
	\end{theorem}
\vskip0.2in
\begin{remark}
     In this work, we only address the stability in the critical point setting for the case $\alpha=m=1$. Investigating stability in the general case where $0<m<n$ and $\frac{m}{2}<\alpha<\frac{n}{2}$ is an interesting and challenging problem. When $\alpha\notin\N_+$ or $m\geq 2$, the Euler-Lagrange equation corresponding to the inequality \eqref{trtr} is not explicit, and it may require more effort to find an appropriate setting for stability. When $\alpha\in\N_+$, $\alpha\geq 2$ and $m=1$, the Euler-Lagrange equation is the following poly-harmonic equation on $\R_+^n$ with mixed boundary conditions:
     \begin{align}\label{rq1}
         \begin{cases}
			\Delta^\alpha u= 0 & \text{in }\R_+^n, \\
			\frac{\partial \Delta^{k_1} u}{\partial t}=\Delta^{k_2}u=0,\; & \text{on }\partial\R_+^n,\\
            (-1)^{\alpha}\frac{\partial \Delta^{\alpha-1}u}{\partial t}=|u|^\frac{2(2\alpha-1)}{n-2\alpha}u & \text{on }\partial\R_+^n,
		\end{cases}
     \end{align}
     where $k_1,k_2\in\N_+$, $\frac{\alpha-1}{2}\leq k_1\leq \alpha-2$ and $\frac{\alpha}{2}\leq k_2\leq \alpha-1$. To the best of our knowledge, the classification of positive solutions to the above system \eqref{rq1} is open (we refer to \cite{Dai,Sun} and the references therein for classification results related to poly-harmonic equations with other boundary conditions).
     
     In addition to the lack of classification results, the establishment of non-degeneracy results -- which are crucial for the derivation of quantitative stability -- is also challenging (see Remark \ref{ert} below for details). When $\alpha=1$, this can be derived from known results related to the Steklov eigenvalue problem in $\B^n$ through a suitable conformal transformation (see Section \ref{sec7} for details). For the case $\alpha\geq2$, we conjecture that the non-degeneracy result might also relate to certain eigenvalue problems, although to the best of our knowledge, such results remain open.
\end{remark}
\vskip0.2in
\begin{remark}\label{ert}
    To illustrate the importance of the non-degeneracy result, we begin by recalling a classical method for deriving stability estimates, which dates back to the pioneering work \cite{bianchi1991}. In general, to quantify an inequality with known extremizers, one first establishes a concentration-compactness type principle. This allows the analysis to be restricted to functions that are close to a given extremizer. The next step is to expand the inequality around the extremizer. By isolating the principal terms, one arrives at a spectral gap inequality, which corresponds to the eigenvalue problem associated with the linearized Euler–Lagrange equation near the extremizer. The non-degeneracy result typically refers to the classification of the lower-order eigenvalues and the structure of the corresponding eigenspaces of this linearized operator. Once such a classification is obtained, the spectral gap inequality can often be derived using appropriate orthogonality conditions.
\end{remark}

\vskip0.2in
Similar to \eqref{mini}, we study the following minimization problem in the critical point setting:
	\begin{equation}\label{mini2}
		C_{\mathrm{CP}}(n,\nu)=\inf_{u}\frac{\norm{\lapl u+\abs{u}^{\frac{2}{n-2}}u}_{H^{-1}(\R_+^n)}}{\dis(u,\mathcal{M}_{\mathrm{E}}^\nu)}
	\end{equation}
for $u\in H^1(\R_+^n)\backslash\mathcal{M}_{\mathrm{E}}^\nu $ satisfying \eqref{rty}. Here $\mathcal{M}_{\mathrm{E}}^\nu:= \{(U[z_1,\lambda_1], ...,U[z_\nu,\lambda_\nu]):  z_i\in\R^{n-1},\lambda_i>0  \}.$ In the case $\nu=1$, we obtain a strict upper bound.

\vskip0.23in

\begin{theorem}\label{them5}
Let $n\geq 3$ and $C_{\mathrm{CP}}(n,1)$ denotes the optimal constant in \eqref{mini2} with $\nu=1$. Then
		\begin{equation*}
			C_{\mathrm{CP}}(n,1)<\frac{2}{n+2}.
		\end{equation*}
\end{theorem}

\vskip0.2in
\begin{remark}
The combination of the above  Theorem  \ref{them5}  and the Proposition \ref{asymp} in Section \ref{sec7} of the current paper indicates that: Any minimizing sequence must be away from $\mathcal{M}_{\mathrm{E}}^1$. However, whether minimizers exist is still unknown.
\end{remark}

\vskip0.2in
The proof of Theorem \ref{them5} is inspired by De Nitti and K\"onig's work in \cite{konig2}. Our primary methods involve a precise spectral gap inequality, a well-defined expression for the  $H^{-1}$ norm and a third-order expansion. To derive the spectral gap inequality, we employ a conformal transformation, linking non-degeneracy results on $\R_+^n$ to the classical Steklov eigenvalue problem on the unit ball $\B^n$. We will use the operator $\mathcal{P}$ in \eqref{dual0} to express the $H^{-1}$ norm, which makes it clear to do the third-order expansion.

\vskip0.24in

\subsection{Structure of the paper}
	
	After a section of notations and preliminaries, in Section \ref{sec3} we present the proof of the functional stability result \eqref{fractional-trace stability}. A discussion about the optimal stability constant and the existence of minimizers will also be given. Based on results in Section \ref{sec3}, we derive some corollaries in Section \ref{sec4}, including some refined Sobolev embeddings and a stability result of a priori estimate. Section \ref{sec5} and Section \ref{sec6} are devoted to qualitative and quantitative profile decomposition respectively. We focus on both the general and the nonnegative case. Finally, in Section \ref{sec7}, we provide some improved estimates of the stability constant for critical point in one bubble case. We begin with the derivation of an explicit spectral gap inequality. Based on it, we can proceed to give an upper bound of $C_{\mathrm{CP}}(n,1)$ in terms of specturm constants.

   \vskip0.3in

\noindent$\textbf{Notations.}$ Throughout this paper, for simplicity, we consistently use $C$ and $c$ to denote positive quantities depending only on the parameters $n,m,\alpha$ and possibly $\nu$. The values of $C$ and $c$ may vary from line to line. Generally, $C$ is used to denote large numbers, while $c$ represents small numbers. For any two quantities $a_1$ and $a_2$, we shall write $a_1\lesssim a_2$ (resp. $a_1\gtrsim a_2$) if $a_1\leq Ca_2$ (resp. $a_1\geq ca_2)$. Also we say $a_1\approx a_2$ if $a_1\lesssim a_2$ and $a_1\gtrsim a_2$. Given any quantity $\varepsilon$, we say $a_1\lesssim_\varepsilon a_2$ if $a_1\lesssim C(\varepsilon)a_2$, where $C(\varepsilon)$ is a positive number depending only on $\varepsilon$. We similarly define $\gtrsim_\varepsilon$ and $\approx_\varepsilon$. 

For convenience, we will also omit the domain of norms in the following sections when no confusion arises.

\vskip0.3in
	
	\section{Preliminaries}
	We begin with setting the notations and definitions we need in our paper.
	
	For $n\ge 1$ and $0<\alpha<\frac{n}{2}$, we define the space $D_\alpha(\R^n)$ as the space of tempered distributions $\mathcal{S}'(\R^n)$ whose Fourier transform belongs to $L^2(\R^n,|k|^{2\alpha}\,dk)$. That is to say, for any element $T\in\mathcal{S}'(\R^n)$ of it, there exists a function $\hat{f}\in L^2(\R^n,|k|^{2\alpha}\,dk)$ such that for all $\phi\in \mathcal{S}(\R^n)$,
	\begin{equation*}
		\widehat{T}(\phi) =: T(\hat{\phi}) = \int_{\R^n} \hat{f}(k)\phi(k)\,dk.
	\end{equation*}
	Here the Fourier transform is defined by
	\begin{equation*}
		\hat{f}(k)=\int_{\R^n}f(x)e^{-2\pi ix\cdot k}\,dx.
	\end{equation*}
	$D_\alpha(\R^n)$ equipped with the following norm is a Hilbert space, in which $\mathcal{S}(\R^n)$ is dense:
	\begin{equation*}
		\norm{f}_{D_\alpha(\R^n)} := \left(\int_{\R^n} \abs{\hat f(k)}^2 \abs{2\pi k}^{2\alpha}\,dk\right)^{1/2}.
	\end{equation*}
	We should claim that, when $\alpha\in\N_+$, the norm above is equal to classical Sobolev norm:
	\begin{equation*}
		\norm{\Delta^{\frac{\alpha}{2}}f}_{L^2(\R^n)} = \left(\int_{\R^n} |\Delta^{\frac{\alpha}{2}}f(x)|^2\,dx\right)^{1/2}.
	\end{equation*}
	
	For $f\in\mathcal{S}(\R^n)$, we define the restriction of $f$ to the $(n-m)$-dimensional hyperplane given by $\{x\in\R^n:x_{n-m+1}=\cdots=x_n=0\}$ as
	\begin{equation*}
		(\tau_mf)(x_1,\cdots,x_{n-m}) :=f(x_1,\cdots,x_{n-m},0,\cdots,0).
	\end{equation*}
	For general $f\in D_\alpha(\R^n)$, $\tau_mf$ can be uniquely defined through a density argument.
	
	For $n\ge 3$, in modern terminology, $2^*$ always denotes the critical Sobolev exponent $\frac{2n}{n-2}$. In our settings, to avoid confusions, we will always use $2^{\dagger}$ to denote the trace critical exponent $\frac{2(n-1)}{n-2}$. We also define $$p=2^{\dagger}-1=\frac{n}{n-2}$$ for simplicity.
	
	In the whole paper, we denote by $S(n,m,\alpha)$ the sharp constant of the fractional Sobolev trace inequality with respect to parameters $0\leq m<n$ and $\frac m2 < \alpha < \frac n2$, i.e.
	\begin{equation*}
		S(n,m,\alpha) := \inf \left\{ \frac{\norm{f}_{D_\alpha(\R^n)}^2}{\norm{\tau_m f}_{L^{s}(\R^{n-m})}^2}: f\in D_\alpha(\R^n)\backslash\{0\},\, s=\frac{2(n-m)}{n-2\alpha}\right\}.
	\end{equation*}
	Note that, under this notation, $S(n,0,\alpha)$ is the sharp constants of fractional Sobolev inequality.
	
	A useful remark is that, when $m=\alpha=1$, the trace inequality above is equivalent to the original Escobar trace inequality in \cite{Escobar} via a simple reflection:
	\begin{equation*}
		S_{\mathrm{E}}(n)\norm{f}_{L^{2^\dagger}(\R^{n-1})}^2 \le \norm{f}_{H^1(\R_+^n)}^2,
	\end{equation*}
	where $S_{\mathrm{E}}(n)=\frac{S(n,1,1)}{4}$ is the sharp constant, and $H^1(\R_+^n)$ is the closure of $C_c^\infty(\R^n)|_{\R_+^n}$ with respect to the norm
	\begin{equation*}
		\norm{f}_{H^1(\R_+^n)}=\left(\int_{\R_+^n}|\nabla f|^2\,dx\right)^{\frac{1}{2}}.
	\end{equation*}
	The extremal functions, up to normalization, are precisely the so-called Escobar bubbles below.
	
	
	
	
	For any $z\in\R^{n-1}$ and $\lambda>0$, the Escobar bubble $U[z,\lambda]$ is defined as
	\begin{equation}\label{esco}
		U[z,\lambda](x,t)=(n-2)^{\frac{n-2}{2}}\left(\frac{\lambda}{(1+\lambda t)^2+\lambda^2 |x-z|^2)}\right)^{\frac{n-2}{2}},\quad x\in\R^{n-1},\,t>0.
	\end{equation}
	The corresponding manifold is denoted by $\mathcal{M}_{\mathrm{E}}$. We state some important properties of Escobar bubbles $U=U[z,\lambda]$.
	\begin{itemize}
		\item They are extremal functions of the Escobar trace inequality:
		\begin{equation*}
			\int_{\R^n_+} \abs{\nabla U}^2 = \int_{\partial\R_+^n} U^{2^\dagger} = S_{\mathrm{E}}(n)^{n-1}.
		\end{equation*}
		
		\item They and their derivatives satisfy
		\begin{equation*}
			\begin{cases}
				\Delta U= 0 & \text{in }\R_+^n, \\
				\frac{\partial U}{\partial t}= -U^p & \text{on }\partial\R_+^n,
			\end{cases}\quad
			\begin{cases}
				\Delta (\partial_s U)= 0 & \text{in }\R_+^n, \\
				\frac{\partial (\partial_s U)}{\partial t}= -pU^{p-1}\partial_s U & \text{on }\partial\R_+^n,
			\end{cases}
		\end{equation*}
		for $s=\lambda,z_1,\cdots,z_{n-1}$.
		
		\item We have the following expression for $\partial_\lambda U$:
		\begin{equation*}
			\partial_\lambda U(x,t) = \frac{n-2}{2\lambda} U(x,t) \left(\frac{1-\lambda^2 t^2 - \lambda^2\abs{x-z}^2}{(1+\lambda t)^2 + \lambda^2 \abs{x-z}^2}\right),
		\end{equation*}
		which yields the estimate of $\abs{\partial_\lambda U}$:
		\begin{equation*}
			\abs{\partial_\lambda U} \le \frac{n-2}{2\lambda}\abs{U}.
		\end{equation*}
		
		\item Symmetries: for $k\in\N$ and exponents set $(\gamma_i)_{i=1}^k$ with $\gamma_1+\cdots+\gamma_k = 2^\dagger$, $k$ Escobar bubbles $U[z_1,\lambda_1],\dots,U[z_k,\lambda_k]$ satisfy
		\begin{equation*}
			\int_{\partial\R_+^n} U[z_1,\lambda_1]^{\gamma_1}\cdots U[z_k,\lambda_k]^{\gamma_k} = \int_{\partial\R_+^n} U[0,1]^{\gamma_1}\prod_{j=2}^k U[\lambda_1(z_j-z_1),\lambda_j/\lambda_1]^{\gamma_j}.
		\end{equation*}
		In addition, when $k=2$, we have
		\begin{equation*}
			\int_{\R^n_+} \nabla U[z_1,\lambda_1]\cdot \nabla U[z_2,\lambda_2] = \int_{\R^n_+} \nabla U[0,1] \cdot \nabla U[\lambda_1(z_2-z_1),\lambda_2/\lambda_1].
		\end{equation*}
	\end{itemize}
	With these symmetries, for the stability problem, it is enough to consider $U[0,1]$ instead of a generic Escobar bubble. When dealing with multiple bubbles, we need to evaluate their interaction (recall Definition \ref{interaction!}). The following lemma is a general version of \cite[Lemma B.1]{figalli}, and we provide a proof here for completeness.
	\begin{lemma}\label{estimate of bubbles}
		Given $n\ge 1$, let us fix $\alpha+\beta=2n$ with $\alpha,\beta \ge 0$, $\lambda \in (0,1]$ and $z\in \R^n$, set $D\defeq |z|$. If $|\alpha-\beta|\ge \epsilon$ for some $\epsilon >0$, then
		\begin{equation*}
			\int_{\R^n} \left(\frac{1}{1+|x|^2}\right)^{\alpha/2}\left(\frac{\lambda}{1+\lambda^2|x-z|^2}\right)^{\beta/2} \approx_{\epsilon} \mu^{\frac{\min(\alpha,\beta)}{2}},
		\end{equation*}
		where $\mu = \min\left\{\lambda,\frac{1}{\lambda D^2}\right\}$, If instead $\alpha=\beta=n$, then
		\begin{equation*}
			\int_{\R^n} \left(\frac{1}{1+|x|^2}\right)^{\alpha/2}\left(\frac{\lambda}{1+\lambda^2|x-z|^2}\right)^{\beta/2} \approx (1-\ln\mu)\mu^{n/2}.
		\end{equation*}
	\end{lemma}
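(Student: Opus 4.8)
The plan is to exploit scaling and a decomposition of $\R^n$ into regions where one of the two factors dominates. After the substitution $x \mapsto x/\lambda$ (or keeping $x$ but rescaling $z$), and using the symmetry $\alpha\leftrightarrow\beta$ together with $\lambda \le 1$, one reduces to estimating $I \defeq \int_{\R^n} (1+|x|^2)^{-\alpha/2}\,\lambda^{\beta/2}(1+\lambda^2|x-z|^2)^{-\beta/2}\,dx$ where we may assume $\beta \ge \alpha$, so $\min(\alpha,\beta)=\alpha$ and we must show $I \approx_{n,\epsilon} \mu^{\alpha/2}$ when $|\alpha-\beta|\ge\epsilon$, and $I\approx_n \ln(\mu)\,\mu^{n/2}$ when $\alpha=\beta=n$. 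Note $\mu = \min\{\lambda, 1/(\lambda D^2)\}$ is small precisely when the two bubbles $U[0,1]$ and $U[z,\lambda]$ are far apart in the sense of scale or location.

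The key steps are as follows. First I would reduce to the normalized case: by the change of variables and by possibly interchanging the roles of the two profiles (using $\lambda \le 1$), we can place the ``wide, low'' bubble at the origin and the ``narrow'' bubble near $z$ with parameter $\lambda \le 1$; this is where the hypothesis $\lambda\in(0,1]$ is used to fix which of $\lambda$ and $1/(\lambda D^2)$ controls the geometry. Second, I would split $\R^n$ into three or four natural annular/ball regions: (i) a ball around the origin of radius comparable to $1$, where the first factor is $\approx 1$ and the integral of the second factor is controlled; (ii) a ball around $z$ of radius comparable to $1/\lambda$, where the second factor (including the $\lambda^{\beta/2}$ prefactor) is $\approx \lambda^{\beta/2}\cdot 1$ and the first factor is $\approx |z|^{-\alpha}$; (iii) the ``far'' region $|x|\gtrsim \max(1, D)$ where both factors decay polynomially; and possibly (iv) an intermediate region. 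On each region I bound the slowly varying factor by its value there and integrate the remaining power of $|x|$ (or $|x-z|$); the exponents $\alpha,\beta$ summing to $2n$ is exactly what makes each such radial integral either convergent at infinity or logarithmically divergent, and the condition $|\alpha-\beta|\ge\epsilon$ guarantees that neither of the two dominant contributions is itself borderline-logarithmic, which is why the clean power $\mu^{\alpha/2}$ appears; when $\alpha=\beta=n$ one of these radial integrals becomes $\int dr/r$ over a range of length $\approx \log(1/\mu)$, producing the extra $\ln(\mu)$ factor. Third, I would check that, for $\mu$ bounded away from $0$ (the bubbles comparable), the integral is trivially bounded above and below by positive constants depending only on $n,\epsilon$, so the asymptotic statement is only nontrivial as $\mu\to 0$; the $\approx_{n,\epsilon}$ notation absorbs this.

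The main obstacle will be organizing the region decomposition uniformly in the two regimes $\lambda \le D^{-1}$ (so $\mu = \lambda$, the two bubbles separated by scale) versus $\lambda \ge D^{-1}$ with $D$ large (so $\mu = 1/(\lambda D^2)$, separated by translation), and verifying that in each regime the competing region contributions match the claimed order without a spurious larger term. Concretely, one must confirm that the contribution of region (iii) (the tail at infinity) is $O(\mu^{\alpha/2})$ and not larger, and that in the translation-dominated regime the ball around $z$ contributes $\approx \lambda^{\beta/2} \cdot \lambda^{-n} \cdot D^{-\alpha}$, which should be rewritten using $\alpha+\beta = 2n$ as $(\lambda D^2)^{-\alpha/2}=\mu^{\alpha/2}$ (up to constants) — this bookkeeping, together with tracking the length of the logarithmic range in the critical case $\alpha=\beta=n$, is the only real content; everything else is a routine estimate of radial integrals $\int_a^b r^{\gamma-1}\,dr$. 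Since a cleaner route is available via the substitution reducing both factors to translates of $(1+|y|^2)^{-s}$ and then applying a standard convolution/Hardy-Littlewood-Sobolev-type asymptotic, I would, if the direct region split becomes cumbersome, instead cite or adapt the corresponding lemma from the profile-decomposition literature (e.g. the analogous estimate in \cite{figalli}), since the integrand here differs from the Talenti-bubble case only by harmless bounded factors.
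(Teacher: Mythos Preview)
Your proposal is correct and matches the paper's approach: the paper does not give an independent proof but simply remarks that this is a general version of \cite[Lemma~B.1]{figalli} and that the proof is similar, which is exactly the fallback you describe at the end. Your region-decomposition outline is the standard argument behind that lemma. One small bookkeeping slip: the symmetry $I(\alpha,\beta,z,\lambda)=I(\beta,\alpha,-\lambda z,1/\lambda)$ (which follows from $\alpha+\beta=2n$ and the change of variables $y=\lambda(x-z)$) sends $\lambda\le 1$ to $1/\lambda\ge 1$, so you cannot simultaneously assume $\lambda\le 1$ and $\beta\ge\alpha$; rather, you fix $\lambda\le 1$ and then treat the two cases $\alpha<\beta$ and $\alpha>\beta$ separately (they lead to the same $\mu^{\min(\alpha,\beta)/2}$ after the region analysis), which is how the Figalli--Glaudo argument is organized.
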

    \vskip0.2in
    \noindent{\bf  Proof of Lemma \ref{estimate of bubbles}.}
        We split the proof into two cases.
        
        \textbf{Case $1$: $D\le \lambda^{-1}$.} In the ball $B(0,2\lambda^{-1})$, we have $0\le |x-z| \le 3\lambda^{-1}$, which implies $1+\lambda^2|x-z|^2 \approx 1$. In the complement $B(0,2\lambda^{-1})^c$, since $\lambda\in (0,1]$, it follows that $1+|x|^2\approx |x|^2$ and $1+\lambda^2|x-z|^2 \approx \lambda^2|x|^2$. Combining these estimates yields:
        \begin{align*}
            & \int_{\R^n} \left(\frac{1}{1+|x|^2}\right)^{\alpha/2}\left(\frac{\lambda}{1+\lambda^2|x-z|^2}\right)^{\beta/2} \\
            \approx{}& \int_{B(0,2\lambda^{-1})} (1+|x|^2)^{-\alpha/2}\lambda^{\beta/2} + \int_{B(0,2\lambda^{-1})^c} |x|^{-\alpha}(\lambda|x|^2)^{-\beta/2} \\
            \approx{}& \int_0^{2\lambda^{-1}} (1+t^2)^{-\alpha/2}\lambda^{\beta/2} t^{n-1}\,dt + \int_{2\lambda^{-1}}^\infty \lambda^{-\beta/2}t^{n-1-\alpha-\beta}\,dt \\
            \approx{}& \lambda^{\beta/2}\int_1^{2\lambda^{-1}} t^{n-1-\alpha}\,dt + \lambda^{\alpha+\frac{\beta}2 -n}.
        \end{align*}
        \begin{itemize}
            \item If $\alpha\ge n+\epsilon$, the expression becomes comparable to $\lambda^{\beta/2}$.
            \item If $\alpha\le n-\epsilon$, the expression becomes comparable to $\lambda^{\alpha+\frac{\beta}{2}-n} = \lambda^{\alpha/2}$.
            \item If $\alpha=n=\beta$, the expression becomes comparable to $\lambda^{n/2}(1-\ln\lambda)$.
        \end{itemize}
    
        \textbf{Case $2$: $D\ge \lambda^{-1}\ge 1$.} In $B(0,D/2)$, we have $1+\lambda^2|x-z|^2 \approx \lambda^2|z|^2=\lambda^2D^2$. Similarly, in $B(z,D/2)$, we find $1+|x|^2 \approx |z|^2=D^2$. These estimates hold in $B(0,2D)\backslash \big(B(0,D/2)\cup B(z,D/2)\big)$, and outside $B(0,2D)$, we have $1+|x|^2 \approx |x|^2$ and $1+\lambda^2|x-z|^2\approx \lambda^2|x|^2$. Combining these estimates yields:
        \begin{align*}
            & \int_{\R^n} \left(\frac{1}{1+|x|^2}\right)^{\alpha/2}\left(\frac{\lambda}{1+\lambda^2|x-z|^2}\right)^{\beta/2} \\
            \approx{}& \int_{B(0,D/2)} (1+|x|^2)^{-\alpha/2}(\lambda D^2)^{-\beta/2} + \int_{B(0,D/2)} D^{-\alpha}(\lambda^{-1}+\lambda|x|^2)^{-\beta/2}\\
            & + D^n\cdot D^{-\alpha}(\lambda D^2)^{-\beta/2} + \int_{B(0,2D)^c} |x|^{-\alpha}(\lambda|x|^2)^{-\beta/2}\\
            \approx{}& (\lambda D^2)^{-\beta/2}\int_0^{D/2} (1+t^2)^{-\alpha/2}t^{n-1}\,dt + \lambda^{\frac{\beta}2 - n}D^{-\alpha}\int_0^{\lambda D/2} (1+t^2)^{-\beta/2}t^{n-1}\,dt\\
            & + \lambda^{-\beta/2}D^{-n} + \lambda^{-\beta/2}\int_{2D}^\infty t^{n-1-\alpha-\beta}\,dt \\
            \approx{}& (\lambda D^2)^{-\beta/2} \int_{1/4}^{D/2} t^{n-1-\alpha}\,dt + (\lambda D^2)^{-\alpha/2}\int_{1/4}^{D\lambda/2} t^{n-1-\beta}\,dt + \lambda^{-\beta/2}D^{-n}.
      \end{align*}
      \begin{itemize}
        \item If $\alpha\ge n+\epsilon$, the expression becomes comparable to $(\lambda D^2)^{-\beta/2}$.
        \item If $\alpha\le n-\epsilon$, the expression becomes comparable to $(\lambda D^2)^{-\alpha/2}$.
        \item If $\alpha=n=\beta$, the expression becomes comparable to $(1+\ln D)(\lambda D^2)^{-n/2}$, which is also comparable to $(1+\ln(\lambda D^2))(\lambda D^2)^{-n/2}$.
      \end{itemize}
      The proof is complete.
    \hfill$\Box$
    \vskip0.2in
	\begin{remark}
		For general $z_1,z_2\in \R^n$ and $\lambda_1\ge \lambda_2$, if $|\alpha-\beta|\ge \epsilon$ for some $\epsilon >0$, from the above lemma we have
		\begin{align*}
			\int_{\R^n} \left(\frac{\lambda_1}{1+\lambda_1^2|x-z_1|^2}\right)^{\alpha/2}\left(\frac{\lambda_2}{1+\lambda_2^2|x-z_2|^2}\right)^{\beta/2} &= \int_{\R^n} \left(\frac{1}{1+|x|^2}\right)^{\alpha/2}\left(\frac{\lambda}{1+\lambda^2|x-z|^2}\right)^{\beta/2}\\
			&\approx_\epsilon\ \mu^{\frac{\min(\alpha,\beta)}{2}}
		\end{align*}
		with $\lambda=\frac{\lambda_2}{\lambda_1}\in(0,1]$ and $z=\lambda_1(z_2-z_1)$. So $\lambda D^2 = \lambda_1\lambda_2|z_1-z_2|^2$ and $\mu=\min\left\{\frac{\lambda_2}{\lambda_1},\frac{1}{\lambda_1\lambda_2|z_1-z_2|^2}\right\}$. If instead $\lambda_1<\lambda_2$, the result is valid with $\mu=\min\left\{\frac{\lambda_1}{\lambda_2},\frac{1}{\lambda_1\lambda_2|z_1-z_2|^2}\right\}$.
	
	Thus, in order to identify whether the interactions of bubbles are weak, it is natural to introduce the definition of weak-interaction (see Definition \ref{interaction}).
    \end{remark}
	\vskip0.2in
	It is remarkable that the interaction of bubbles concentrates on some area depending on the bubbles:
	\begin{lemma}\label{estimate of bubbles 2}
		Given $n\ge 3$ and two Escobar bubbles $U_1=U[z_1,\lambda_1]$ and $U_2=U[z_2,\lambda_2]$ with $\lambda_1\ge \lambda_2$, it holds that
		\begin{equation*}
			\int_{\partial\R_+^n} U_1^p U_2 \approx \int_{B^{n-1}(z_1,\lambda_1^{-1})\cap \partial\R_+^n} U_1^p U_2.
		\end{equation*}
	\end{lemma}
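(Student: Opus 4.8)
\emph{Proof idea.} The plan is to normalize the configuration, bound the restricted integral from below by a power of the interaction parameter, and then read off the matching upper bound from Lemma~\ref{estimate of bubbles}. By the scaling--translation invariance of Escobar bubbles---the change of variables $y=\lambda_1(x-z_1)$ underlying the symmetries listed above (with $k=2$ and exponents $(p,1)$)---both integrals transform simultaneously: $\int_{\partial\R_+^n}U_1^pU_2$ becomes $\int_{\R^{n-1}}U[0,1]^pU[z,\lambda]$ and $\int_{B^{n-1}(z_1,\lambda_1^{-1})\cap\partial\R_+^n}U_1^pU_2$ becomes $\int_{B^{n-1}(0,1)}U[0,1]^pU[z,\lambda]$, where $z\defeq\lambda_1(z_2-z_1)$ and $\lambda\defeq\lambda_2/\lambda_1\in(0,1]$ (here $\lambda\le1$ exactly because $\lambda_1\ge\lambda_2$). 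So it suffices to show $\int_{B^{n-1}(0,1)}U[0,1]^pU[z,\lambda]\approx_n\int_{\R^{n-1}}U[0,1]^pU[z,\lambda]$, where on $\partial\R_+^n$, up to a positive dimensional constant, $U[0,1]^p=(1+|y|^2)^{-n/2}$ and $U[z,\lambda]=\bigl(\lambda/(1+\lambda^2|y-z|^2)\bigr)^{(n-2)/2}$.

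The bound ``$\lesssim$'' is trivial since the integrand is nonnegative and $B^{n-1}(0,1)\subset\R^{n-1}$. Applying Lemma~\ref{estimate of bubbles} in dimension $n-1$ with $\alpha=n$, $\beta=n-2$ (so $\alpha+\beta=2(n-1)$, $|\alpha-\beta|=2$, $\min(\alpha,\beta)=n-2$) gives $\int_{\R^{n-1}}U[0,1]^pU[z,\lambda]\approx_n\mu^{(n-2)/2}$ with $\mu=\min\{\lambda,(\lambda|z|^2)^{-1}\}$. Hence everything reduces to the lower bound $\int_{B^{n-1}(0,1)}U[0,1]^pU[z,\lambda]\gtrsim_n\mu^{(n-2)/2}$.

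On $B^{n-1}(0,1)$ the factor $U[0,1]^p$ is comparable to its maximum, $(1+|y|^2)^{-n/2}\ge2^{-n/2}$, so it is enough to bound $\int_{B^{n-1}(0,1)}\bigl(\lambda/(1+\lambda^2|y-z|^2)\bigr)^{(n-2)/2}$ from below by $\mu^{(n-2)/2}$; morally, since $\lambda\le1$ the bubble $U[z,\lambda]$ lives at the coarse scale $\lambda^{-1}\ge1$ and is essentially constant on the unit ball. Concretely, split according to which branch realizes the minimum in $\mu$. If $\mu=\lambda$, i.e.\ $\lambda|z|\le1$, then for $y\in B^{n-1}(0,1)$ we have $1+\lambda^2|y-z|^2\le1+\lambda^2(1+|z|)^2\le5$ (using $\lambda+\lambda|z|\le2$), so the integrand is $\ge(\lambda/5)^{(n-2)/2}$ and the integral is $\gtrsim_n\lambda^{(n-2)/2}$. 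If $\mu=(\lambda|z|^2)^{-1}$, i.e.\ $\lambda|z|\ge1$ (hence $|z|\ge1$), then for $y\in B^{n-1}(0,1)$ we have $|y-z|\le1+|z|\le2|z|$, so $1+\lambda^2|y-z|^2\le1+4\lambda^2|z|^2\le5\lambda^2|z|^2$, the integrand is $\ge(5\lambda|z|^2)^{-(n-2)/2}$, and the integral is $\gtrsim_n(\lambda|z|^2)^{-(n-2)/2}$. In both cases we get $\gtrsim_n\mu^{(n-2)/2}$, which together with the upper bound yields the claim.

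I do not anticipate a real obstacle here---the estimate is elementary once the normalization is set up. The few points that require care are: verifying that the change of variables realizing the bubble symmetry maps $B^{n-1}(z_1,\lambda_1^{-1})$ exactly onto $B^{n-1}(0,1)$, so that the reduction is lossless; keeping straight which branch of the minimum defines $\mu$; and noting that the hypothesis $\lambda_1\ge\lambda_2$ (i.e.\ $\lambda\le1$) is precisely what makes the less concentrated bubble slowly varying at the concentration scale $\lambda_1^{-1}$ of $U_1$, hence what makes the restricted integral capture the full order $\mu^{(n-2)/2}$ of the interaction.
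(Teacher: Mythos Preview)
Your argument is correct and is essentially the explicit version of what the paper indicates: the paper simply observes that the lemma follows from \cite[Corollary~B.4]{figalli}, and your normalization to $U[0,1]$, $U[z,\lambda]$ with $\lambda\le 1$, followed by the two-case lower bound on the unit ball matched against the upper bound from Lemma~\ref{estimate of bubbles}, is precisely how that corollary is proved (transported from Talenti bubbles in $\R^{n-1}$ to the boundary restrictions of Escobar bubbles). The details you flag---that the change of variables carries $B^{n-1}(z_1,\lambda_1^{-1})$ to $B^{n-1}(0,1)$, and that $\lambda_1\ge\lambda_2$ is what makes $U[z,\lambda]$ essentially constant on the unit ball---are exactly the right checkpoints, and your case split on $\lambda|z|\lessgtr 1$ is clean.
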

	This lemma is similar to \cite[Corollary B.4]{figalli}, and we give a proof here for completeness.
    \vskip0.2in
    \noindent{\bf  Proof of Lemma \ref{estimate of bubbles 2}.}
        Without loss of generality, we assume $z_1=0$, $\lambda_1=1$, and denote $\lambda\coloneqq \lambda_2,\, z\coloneqq z_2$. Since $\lambda\le 1$, for any $x\in B^{n-1}(0,1)$, we have:
        \[ 1+\lambda^2|x-z|^2 \le 1 + 2\lambda^2(1+|z|^2) \le 3(1+\lambda^2|z|^2);\]
        \[ 1+ \lambda^2|x-z|^2 \ge \begin{cases}
            1 \ge \frac 15 (1+\lambda^2|z|^2), & |z|\le 2; \\
            1 + \frac 14 \lambda^2|z|^2 \ge \frac 14 (1+\lambda^2|z|^2), & |z|>2\ge 2|x|.
        \end{cases}\]
        Thus, we conclude that $U_2(x,0) \approx U_2(0,0)$. This leads to:
        \[\int_{B^{n-1}(0,1)\cap \partial\R_+^{n}} U_1^p U_2 \approx U_2(0)\int_{B^{n-1}(0,1)}\left(\frac{1}{1+|x|^2}\right)^{\frac n2}\approx \left(\frac{\lambda}{1+\lambda^2|z|^2}\right)^{\frac{n-2}{2}}.\]
        By applying Lemma \ref{estimate of bubbles} (replacing $n$ by $n-1$ and taking $\alpha=n,\, \beta=n-2$), we obtain:
        \[ \int_{\partial\R^n_+} U_1^p U_2 \approx \min\{\lambda,\frac{1}{\lambda|z|^2}\}^{\frac{n-2}{2}}\approx \left(\frac{\lambda}{1+\lambda^2|z|^2}\right)^{\frac{n-2}{2}}.\]
        This completes the proof.
    \hfill$\Box$
	
	\section{Stability of the fractional Sobolev trace inequality}\label{sec3}
	In this section, we are devoted to analyzing the stability of the following sharp fractional Sobolev trace inequality given by Einav and Loss in \cite{fractr}:
	\begin{theorem}
		Let $0\le m < n$ and $\frac m2 < \alpha < \frac n2$. For any $f\in D_\alpha(\R^n)$, we have
		\begin{equation*}
			S(n,m,\alpha)\norm{\tau_mf}_{L^{\frac{2(n-m)}{n-2\alpha}}(\R^{n-m})}^2 \le \norm{f}_{D_\alpha(\R^n)}^2,
		\end{equation*}
		where
		\begin{equation*}
			S(n,m,\alpha) = 2^{2\alpha}\pi^{\alpha}\frac{\Gamma(\alpha)\Gamma(n/2+\alpha-m)}{\Gamma(n/2-\alpha)\Gamma(\alpha-m/2)}\left(\frac{\Gamma(n-m)}{\Gamma((n-m)/2)}\right)^{\frac{m-2\alpha}{n-m}}
		\end{equation*}
		and the equality holds if and only if $f(x)=f(x'',x')$, $x'\in \R^{m},\, x''\in \R^{n-m}$ is proportional to
		\begin{equation*}
			\int_{\R^{n-m}} \frac{1}{(|x'|^2+|x''-y''|^2)^{(n-2\alpha)/2}}\frac{1}{(\gamma^2+|y''-a|^2)^{(n+2\alpha-2m)/2}}\,dy''
		\end{equation*}
		for some $a\in\R^{n-m}$ and $\gamma\neq 0$. We denote the set of minimizers by $\mathcal{M}_{n,m,\alpha}$.
	\end{theorem}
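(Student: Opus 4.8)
The plan is to reduce \eqref{fractional ineq} to the sharp fractional Sobolev inequality on $\R^{n-m}$ with smoothness $\alpha-\tfrac m2$, via a Fourier-side fibrewise optimization, and then to recover the constant $S(n,m,\alpha)$ and the extremal family \eqref{minimizer-main} by tracking normalizations. Split the frequency variable as $k=(k'',k')\in\R^{n-m}\times\R^m$, so that $\widehat{\tau_m f}(k'')=\int_{\R^m}\hat f(k'',k')\,dk'$ and
\begin{equation*}
	\norm{f}_{D_\alpha(\R^n)}^2=\int_{\R^{n-m}}\!\!\left(\int_{\R^m}\abs{\hat f(k'',k')}^2\bigl(\abs{2\pi k''}^2+\abs{2\pi k'}^2\bigr)^{\alpha}\,dk'\right)dk''.
\end{equation*}
Fixing a profile $g\in D_{\alpha-m/2}(\R^{n-m})$ and minimizing $\norm{f}_{D_\alpha(\R^n)}^2$ over all $f$ with $\tau_m f=g$, the constraint couples only the $k'$-fibre over each $k''$; each fibre problem is a single quadratic minimization under one linear constraint, solved by Cauchy--Schwarz, with minimizer $\hat f(k'',k')=c(k'')\bigl(\abs{2\pi k''}^2+\abs{2\pi k'}^2\bigr)^{-\alpha}$. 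The hypothesis $\alpha>\tfrac m2$ is exactly what makes $\int_{\R^m}\bigl(\abs{2\pi k''}^2+\abs{2\pi k'}^2\bigr)^{-\alpha}\,dk'$ finite; evaluating it (by scaling and $\int_{\R^m}(1+\abs{u}^2)^{-\alpha}\,du=\pi^{m/2}\Gamma(\alpha-\tfrac m2)/\Gamma(\alpha)$) produces a constant times $\abs{2\pi k''}^{m-2\alpha}$, and matching against $\hat g$ and integrating in $k''$ gives the reduction identity
\begin{equation*}
	\inf\bigl\{\norm{f}_{D_\alpha(\R^n)}^2:\ \tau_m f=g\bigr\}=c_{n,m,\alpha}\,\norm{g}_{D_{\alpha-m/2}(\R^{n-m})}^2,\qquad c_{n,m,\alpha}=2^{m}\pi^{m/2}\frac{\Gamma(\alpha)}{\Gamma(\alpha-m/2)},
\end{equation*}
the infimum being attained by the $f$ above. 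This is precisely the reduction principle of \cite{fractr} (cf.\ Theorem~\ref{thm: ineq2}); for general $f\in D_\alpha(\R^n)$ one extends it from $\mathcal S(\R^n)$ by density and continuity of $\tau_m$, the interchange of the infimum with the $k''$-integral being justified by monotone/dominated convergence.

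Granting the reduction, apply it with $g=\tau_m f$ and then Lieb's sharp fractional Sobolev inequality in dimension $N=n-m$ with parameter $s=\alpha-\tfrac m2\in(0,\tfrac N2)$ (the upper bound being equivalent to $\alpha<\tfrac n2$):
\begin{equation*}
	\norm{f}_{D_\alpha(\R^n)}^2\ \ge\ c_{n,m,\alpha}\,\norm{\tau_m f}_{D_{\alpha-m/2}(\R^{n-m})}^2\ \ge\ c_{n,m,\alpha}\,S(N,0,s)\,\norm{\tau_m f}_{L^{q}(\R^{N})}^2,
\end{equation*}
where $q=\tfrac{2N}{N-2s}=\tfrac{2(n-m)}{n-2\alpha}$. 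Hence $S(n,m,\alpha)=c_{n,m,\alpha}\,S(n-m,0,\alpha-\tfrac m2)$; substituting the known value of $S(n-m,0,\alpha-\tfrac m2)$ — using $N/2+s=n/2+\alpha-m$, $N/2-s=n/2-\alpha$ and $2s/N=(2\alpha-m)/(n-m)$ — and collecting the powers of $2$, $\pi$ and the Gamma factors reproduces the stated formula for $S(n,m,\alpha)$.

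For the equality case one traces equality through both inequalities. Equality in the Sobolev step forces $\tau_m f$ to be an extremal of the $(\R^{n-m},\alpha-\tfrac m2)$ inequality, i.e.\ (since $N-2s=n-2\alpha$) $\tau_m f(x'')=\lambda\,(\gamma^2+\abs{x''-a}^2)^{-(n-2\alpha)/2}$ for some $\lambda\in\R$, $\gamma\neq0$, $a\in\R^{n-m}$; equality in the reduction step forces $f$ to be the optimal extension $\hat f(k)=\bigl(\abs{2\pi k''}^2+\abs{2\pi k'}^2\bigr)^{-\alpha}\,\hat\rho(k'')$ with $\rho$ a constant multiple of $(-\Delta_{x''})^{\alpha-m/2}(\tau_m f)$ on $\R^{n-m}$. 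Since $\tau_m f$ is a bubble, $\rho(y'')\propto(\gamma^2+\abs{y''-a}^2)^{-(n+2\alpha-2m)/2}$, so in physical space $f$ is (a constant times) the Riesz potential $I_{2\alpha}$ on $\R^n$ of this density carried by $\R^{n-m}\times\{0\}$, i.e.\ exactly \eqref{minimizer-main}; that its trace equals the bubble is the identity $I_{2\alpha-m}\bigl[(\gamma^2+\abs{\cdot-a}^2)^{-(n+2\alpha-2m)/2}\bigr]\propto(\gamma^2+\abs{\cdot-a}^2)^{-(n-2\alpha)/2}$ in $\R^{n-m}$, a standard Riesz-kernel computation. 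This pins down $\mathcal{M}_{n,m,\alpha}$.

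The main obstacle, to my mind, is not conceptual but careful bookkeeping: justifying the fibrewise optimization and the exchange of $\inf$ with the $k''$-integral for \emph{all} $f\in D_\alpha(\R^n)$ rather than just Schwartz functions, tracking every normalization constant through the Fourier convention used here, and carrying out the explicit Riesz-potential computations that identify \eqref{minimizer-main}. The two nontrivial external inputs — the Einav--Loss reduction principle and the sharpness (with extremals) of the fractional Sobolev inequality — are off the shelf, so the real content lies in the constants and in the identification of the extremal manifold.
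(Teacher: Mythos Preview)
Your proposal is correct and follows exactly the route the paper attributes to Einav--Loss \cite{fractr}: the paper does not give its own proof of Theorem~\ref{thm: fractional} but states that it ``was obtained by showing the two following inequalities and combining them together,'' namely the reduction principle (Theorem~\ref{thm: ineq2}) and the sharp fractional Sobolev inequality (Theorem~\ref{thm: ineq1}). Your fibrewise Cauchy--Schwarz minimization is precisely the proof of Theorem~\ref{thm: ineq2} (your constant $c_{n,m,\alpha}$ agrees with $R(n,m,\alpha)$ there, and your optimizer matches \eqref{trace equality: f}), and the identification $S(n,m,\alpha)=R(n,m,\alpha)\,S(n-m,0,\alpha-\tfrac m2)$ together with the extremal bookkeeping is exactly the combination the paper describes.
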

	
	It was obtained by showing the following two inequalities and combining them together:
	\begin{theorem}[fractional Sobolev inequality]\label{thm: ineq1}
		Let $0< \alpha < n/2$. For any $f\in D_\alpha(\R^n)$,
		\begin{equation*}
			S(n,0,\alpha) \norm{f}_{L^s(\R^n)}^2 \le \norm{f}_{D_\alpha(\R^n)}^2,
		\end{equation*}
		where $s=\frac{2n}{n-2\alpha}$ and the equality holds if and only if
		\begin{equation*}
			f(x) = A(\gamma^2 + |x-a|^2)^{-\frac{n-2\alpha}{2}}
		\end{equation*}
		for some $A\in\R$, $\gamma\neq 0$ and $a\in\R^n$. The set of minimizers is denoted by $\mathcal{M}_{n,0,\alpha}$.
	\end{theorem}
	
	\begin{theorem}[reduction principle]\label{thm: ineq2}
		Assume $0\leq m<n$ and $\frac m2 < \alpha < \frac n2$. Then the trace $\tau_m$ has a unique extension to a bounded operator $\tau_m: D_\alpha(\R^n) \to D_{\alpha-m/2}(\R^{n-m})$. Moreover, for any $f\in D_\alpha(\R^n)$,
		\begin{equation}\label{inequality of thm2}
			R(n,m,\alpha) \norm{\tau_m f}_{D_{\alpha-m/2}(\R^{n-m})}^2 \le \norm{f}_{D_\alpha(\R^n)}^2,
		\end{equation}
		where
		\begin{equation*}
			R(n,m,\alpha) = 2^{m}\pi^{m/2}\frac{\Gamma(\alpha)}{\Gamma((2\alpha-m)/2)},
		\end{equation*}
		and the equality holds if and only if
		\begin{equation}\label{trace equality: f}
			\hat{f}(k_1,k_2) = \frac{\hat g(k_1)}{(|k_1|^2+|k_2|^2)^\alpha}
		\end{equation}
		with
		\begin{equation*}
			\int_{\R^{n-m}} \frac{\abs{\hat g(k_1)}^2}{\abs{k_1}^{2\alpha-m}}\,dk_1 < \infty.
		\end{equation*}
		Here $k_1\in\R^{n-m}$ and $k_2\in\R^m$.
	\end{theorem}
	To get the stability result (Theorem \ref{main thm1}), we also need a stability result of the fractional Sobolev inequality proved by Chen, Frank and Weth in \cite{chen-frank}:
	\begin{theorem}\label{thm: stability1}
		Let $0<\beta<N$, $\mathcal{M} = \mathcal{M}_{N,0,\beta}$, then $C_{\mathrm{BE}}(N,0,\beta) \in(0,1)$ and we have
		\begin{equation}\label{fractional stability}
			\dis^2(u,\mathcal{M}) \ge \norm{u}_{D_{\beta}(\R^N)}^2 - S(N,0,\beta)\norm{u}_{L^\frac{2N}{N-2\beta}(\R^N)}^2 \ge C_{\mathrm{BE}}(N,0,\beta) \dis^2(u,\mathcal{M})
		\end{equation}
		for all $u\in D_{\beta}(\R^N)$, where $\dis(u,\mathcal{M}) = \min \{\norm{u-\varphi}_{D_{\beta}(\R^N)}:\varphi\in\mathcal{M}\}$.
	\end{theorem}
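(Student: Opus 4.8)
The plan is to adapt the classical argument of Bianchi and Egnell~\cite{bianchi1991}; this is possible because, unlike $W^{1,p}$ for $p\neq2$, the space $D_\beta(\R^N)$ is a Hilbert space, so the only genuinely new ingredients are the spectral analysis of the \emph{nonlocal} linearized operator and a concentration--compactness principle for~\eqref{asdf}. Write $q=\frac{2N}{N-2\beta}$, $S=S(N,0,\beta)$ and $F(u)\defeq\norm{u}_{D_\beta(\R^N)}^2-S\norm{u}_{L^q(\R^N)}^2$. The left inequality $\dis^2(u,\mathcal M)\ge F(u)$ is elementary: for $\varphi\in\mathcal M$ one has $\norm{u-\varphi}_{D_\beta}^2=\norm{u}_{D_\beta}^2-2\scalprod{u}{\varphi}_{D_\beta}+\norm{\varphi}_{D_\beta}^2$, and since $\varphi$ is an extremal it solves the Euler--Lagrange equation of~\eqref{asdf}, which upon pairing with $u$ and using Hölder's inequality gives $\scalprod{u}{\varphi}_{D_\beta}\le S\norm{\varphi}_{L^q}\norm{u}_{L^q}$, together with $\norm{\varphi}_{D_\beta}^2=S\norm{\varphi}_{L^q}^2$; hence $\norm{u-\varphi}_{D_\beta}^2\ge\norm{u}_{D_\beta}^2-2S\norm{\varphi}_{L^q}\norm{u}_{L^q}+S\norm{\varphi}_{L^q}^2$, and minimizing the right-hand side over the scaling $\varphi\mapsto t\varphi$ (under which $\mathcal M$ is invariant) produces exactly $F(u)$. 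Taking the infimum over $\varphi\in\mathcal M$ gives the claim, hence $C_{\mathrm{BE}}(N,0,\beta)\le1$; equality would force equality in Hölder and in~\eqref{asdf} simultaneously, which only occurs for $u\in\mathcal M$, so $C_{\mathrm{BE}}(N,0,\beta)<1$. It remains to prove the right inequality with some positive constant.

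\emph{Local coercivity.} I would first show there are $\varepsilon_0,\theta>0$ depending on $N,\beta$ with $F(u)\ge\theta\,\dis^2(u,\mathcal M)$ whenever $\dis(u,\mathcal M)\le\varepsilon_0$. Let $\varphi\in\mathcal M$ realize the distance, normalized so that $\norm{\varphi}_{L^q}=1$, whence $(-\Delta)^\beta\varphi=S\varphi^{q-1}$, and write $u=\varphi+\rho$ with $\rho$ orthogonal in $D_\beta$ to the tangent space $T_\varphi\mathcal M$, which is spanned by $\varphi$ itself together with the dilation and translation directions. A Taylor expansion yields $F(u)=Q(\rho)+o(\norm{\rho}_{D_\beta}^2)$ with $Q(\rho)=\norm{\rho}_{D_\beta}^2-(q-1)S\int_{\R^N}\varphi^{q-2}\rho^2$ (the linear term, and the term proportional to $(\int\varphi^{q-1}\rho)^2$, vanish because $\rho\perp\varphi$), the remainder being controlled uniformly by a Hölder/Taylor estimate for $t\mapsto\norm{t}_{L^q}^2$. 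Pushing forward by stereographic projection to $S^N$, $\varphi$ becomes constant and $(-\Delta)^\beta$ becomes the conformally covariant intertwining operator of order $2\beta$, whose eigenvalue on spherical harmonics of degree $\ell$ is the explicit Gamma ratio $\lambda_\ell=\Gamma(\tfrac N2+\beta+\ell)/\Gamma(\tfrac N2-\beta+\ell)$. The degrees $\ell=0,1$ correspond exactly to $T_\varphi\mathcal M$, and nondegeneracy of the extremals says $T_\varphi\mathcal M$ is precisely the kernel of $Q$; since $\ell\mapsto\lambda_\ell$ is strictly increasing, this produces a strict spectral gap, so $Q(\rho)\ge\theta\norm{\rho}_{D_\beta}^2$ on $(T_\varphi\mathcal M)^\perp$. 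Combined with $F(\varphi)=0$ and absorbing the remainder for $\norm{\rho}_{D_\beta}$ small, this gives the local estimate with $\dis^2(u,\mathcal M)=\norm{\rho}_{D_\beta}^2$.

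\emph{Globalization by compactness.} To upgrade to a uniform constant I argue by contradiction: if $C_{\mathrm{BE}}(N,0,\beta)=0$ there are $u_n\in D_\beta(\R^N)\setminus\mathcal M$ with $F(u_n)/\dis^2(u_n,\mathcal M)\to0$, which may be normalized so $\norm{u_n}_{D_\beta}=1$; then $F(u_n)\to0$, i.e.\ $\{u_n\}$ is a minimizing sequence for~\eqref{asdf}. By the concentration--compactness principle for the fractional Sobolev inequality (available through the Fourier or Caffarelli--Silvestre extension formulation, using the fractional analogue of Lions' lemma), there are dilations/translations $T_n$ with $T_nu_n\to\varphi_0\in\mathcal M$ strongly in $D_\beta$; as $F$ and $\dis(\cdot,\mathcal M)$ are invariant under $T_n$, this forces $\dis(u_n,\mathcal M)\to0$, so the local estimate applies for large $n$ and gives $F(u_n)/\dis^2(u_n,\mathcal M)\ge\theta>0$, a contradiction. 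Hence $C_{\mathrm{BE}}(N,0,\beta)>0$, completing the proof.

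\emph{Main obstacle.} The heart of the argument is the local step, specifically the spectral gap for the nonlocal quadratic form $Q$. This rests on the nondegeneracy of the extremals of~\eqref{asdf} --- that the kernel of the linearized operator is exactly $T_\varphi\mathcal M$ --- and on checking that the Gamma ratio at $\ell=2$ strictly exceeds the critical value attained at $\ell=1$ for \emph{every} admissible pair $(N,\beta)$, a monotonicity fact which is transparent here but has no counterpart for the non-Hilbertian inequalities. A secondary technical point is handling the full conformal (not merely affine) invariance of~\eqref{asdf} in the concentration--compactness step, which is cleanest when everything is transplanted to $S^N$.
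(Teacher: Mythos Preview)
The paper does not actually prove Theorem~\ref{thm: stability1}; it is quoted as an input from Chen--Frank--Weth~\cite{chen-frank} and then used as a black box in the proof of Theorem~\ref{main thm1}. Your proposal is therefore not competing with a proof in the paper but rather sketching the argument of the cited reference, and it does so correctly: the Bianchi--Egnell strategy of (i) a local second-order expansion combined with the spectral gap for the conformally covariant operator $P_\beta$ on $S^N$ (nondegeneracy of extremals; the eigenvalues $\lambda_\ell=\Gamma(\tfrac N2+\beta+\ell)/\Gamma(\tfrac N2-\beta+\ell)$ are strictly increasing in $\ell$), and (ii) a global contradiction argument via concentration--compactness for minimizing sequences of~\eqref{asdf}, is exactly the route taken in~\cite{chen-frank}.

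Two small remarks. First, your argument for the left inequality is clean and correct; note however that your justification of the \emph{strict} bound $C_{\mathrm{BE}}(N,0,\beta)<1$ is phrased a bit loosely. What your Hölder/scaling computation shows is that $F(u)<\dis^2(u,\mathcal M)$ for every $u\notin\mathcal M$; since $C_{\mathrm{BE}}$ is the infimum of the ratio, exhibiting a single such $u$ already gives $C_{\mathrm{BE}}<1$. More informatively, the local step itself yields $C_{\mathrm{BE}}\le 1-\lambda_1/\lambda_2<1$ by testing with $\rho$ in the second spherical-harmonic eigenspace. Second, in the Taylor expansion you should be careful with the remainder when $q<3$ (i.e.\ $\beta>\tfrac N6$), where the map $t\mapsto|t|^q$ is only $C^{1,q-1}$; this is handled in~\cite{chen-frank} by a pointwise Taylor estimate with Hölder remainder rather than a $C^2$ expansion, and your phrase ``Hölder/Taylor estimate'' suggests you are aware of this.
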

	
	The main idea of deriving Theorem \ref{main thm1} is using Theorems \ref{thm: ineq2} and \ref{thm: stability1} to construct $v$ directly. Precisely, any function $f\in D_\alpha(\R^n)$ can be orthogonally decomposed into two parts $g$ and $f-g$, where $g$ attains the equality of \eqref{inequality of thm2} with the same trace as $f$, and $\norm{f-g}_{D_\alpha}$ can be calculated directly. Then, estimating $\dis(\tau_m g,\mathcal{M}_{n-m,0,\alpha-m/2})$ by \eqref{fractional stability} leads to the result.


\vskip0.3in

\noindent{\bf  Proof of Theorem \ref{main thm1}. } For a given $f\in D_\alpha(\R^n)$, we claim that there exists a function $g\in D_\alpha(\R^n)$ such that $g$ takes equality of \eqref{inequality of thm2} and $\tau_m g=\tau_m f$. Set
		\begin{equation*}
			\hat{g}(k_1,k_2) = C_1(m,\alpha)^{-1}\frac{\widehat{\tau_m f}(k_1) |k_1|^{2\alpha-m}}{(|k_1|^2+|k_2|^2)^{\alpha}},\quad k_1\in\R^{n-m}, k_2\in\R^m,
		\end{equation*}
		where $C_1(m,\alpha)= \int_{\R^m}(1+|x|^2)^{-\alpha}\,dx$ is a constant. Since $\tau_m f\in D_{\alpha-m/2}(\R^{n-m})$, if we take $\hat{h}(k_1)\defeq C^{-1}(m,\alpha)\widehat{\tau_m f}(k_1) |k_1|^{2\alpha-m}$, then $h \in D_{\alpha-m/2}(\R^{n-m})$, and $g$ satisfies \eqref{trace equality: f}. On the other hand,
		\begin{equation*}
			\begin{aligned}
				\widehat{\tau_m g}(k_1) = &\int_{\R^m} \hat{g}(k_1,k_2)\,dk_2 = \int_{\R^m} \frac{\hat{h}(k_1)}{(|k_1|^2+|k_2|^2)^{\alpha}}\,dk_2\\
				= &C_1(m,\alpha)\hat{h(k_1)}|k_1|^{-2\alpha+m} = \widehat{\tau_m f}(k_1),
			\end{aligned}
		\end{equation*}
		thus $\tau_m g = \tau_m f$. This proves the claim. Next, we compute the distance from $f$ to $g$:
		\begin{equation}\label{distance of f and g}
			\begin{split}
				\norm{f-g}_{D_\alpha}^2 ={}& \norm{f}_{D_\alpha}^2 + \norm{g}_{D_\alpha}^2 - 2\int_{\R^n} \overline{\hat{f}(k_1,k_2)}\hat{g}(k_1,k_2) (|2\pi k_1|^2+|2\pi k_2|^2)^{\alpha}\,dk_1dk_2 \\
				={}& \norm{f}_{D_\alpha}^2 + R(n,m,\alpha)\norm{\tau_mf}_{D_{\alpha-m/2}}^2\\ &- 2C_1(m,\alpha)^{-1}(2\pi)^{m} \int_{R^{n-m}}\overline{\widehat{\tau_mf}(k_1)}\widehat{\tau_mf}(k_1)|2\pi k_1|^{2\alpha-m}\,dk_1 \\
				={}& \norm{f}_{D_\alpha}^2 - R(n,m,\alpha)\norm{\tau_mf}_{D_{\alpha-m/2}}^2.
			\end{split}
		\end{equation}
		Since $\tau_mf\in D_{\alpha-m/2}(\R^{n-m})$, picking $N=n-m$ and $\beta=\alpha-\frac m2$ in Theorem \ref{thm: stability1}, we get some $\tilde{v}\in \mathcal{M}_{N,0,\beta}$ such that
		\begin{equation*}
			\begin{split}
				\norm{\tau_m f-\tilde{v}}_{D_{\alpha-m/2}}^2 \le{}& C_{\mathrm{BE}}(N,0,\beta)^{-1} \left(\norm{\tau_m f}_{D_{\alpha-m/2}}^2 - S(N,0,\beta)\norm{\tau_m f}_{L^s}^2\right),
			\end{split}
		\end{equation*}
		where $s=\frac{2(n-m)}{n-2\alpha}$. Finally, taking $v\in D_\alpha(\R^n)$ that attains equality of \eqref{inequality of thm2} and $\tau_m v=\tilde{v}$, we know that $g-v$ also attains the equality by checking \eqref{trace equality: f}. Hence,
		\begin{equation}\label{distance of g and v}
			\begin{split}
				\norm{g-v}_{D_{\alpha}}^2 ={}& R(n,m,\alpha)\norm{\tau_mg-\tau_mv}_{D_{\alpha-m/2}}^2 \\
				={}& R(n,m,\alpha)\norm{\tau_mf-\tilde{v}}_{D_{\alpha-m/2}}^2\\
				\le{}& C_{\mathrm{BE}}(N,0,\beta)^{-1}R(n,m,\alpha)\left(\norm{\tau_m f}_{D_{\alpha-m/2}}^2 - S(N,0,\beta)\norm{\tau_m f}_{L^s}^2\right).
			\end{split}
		\end{equation}
		By similar computation as in \eqref{distance of f and g}, we know by $\tau_m f=\tau_m g$ that
		\begin{equation*}
			\begin{split}
				& \int_{\R^n}\overline{(\hat{f}-\hat{g})(k_1,k_2)}(\hat{g}-\hat{v})(k_1,k_2)(\abs{2\pi k_1}^2+\abs{2\pi k_2}^2)^\alpha\,dk_1dk_2\\
				={}& C_1(m,\alpha)^{-1}(2\pi)^m\int_{\R^{n-m}} \overline{(\widehat{\tau_m f}-\widehat{\tau_m g})(k_1)}(\widehat{\tau_mg}-\widehat{\tau_m v})(k_1)\abs{2\pi k_1}^{2\alpha-m} dk_1=0.
			\end{split}
		\end{equation*}
		Thus, combining \eqref{distance of f and g}, \eqref{distance of g and v}, \eqref{inequality of thm2}, and noticing that $C_{\mathrm{BE}}(N,0,\beta)\in(0,1)$, we get the desired estimate \eqref{fractional-trace stability}:
		\begin{align*}
			\norm{f-v}_{D_\alpha}^2
			={}& \norm{f-g}_{D_\alpha}^2 + \norm{g-v}_{D_\alpha}^2 \\
			\le{}& \norm{f}_{D_\alpha}^2 - R(n,m,\alpha)\norm{\tau_m f}_{D_{\alpha-m/2}}^2\\
			&+ C_{\mathrm{BE}}(N,0,\beta)^{-1}\,R(n,m,\alpha)\left(\norm{\tau_m f}_{D_{\alpha-m/2}}^2 - S(N,0,\beta)\norm{\tau_m f}_{L^s}^2\right) \\
			={}& \norm{f}_{D_\alpha}^2 + \left(C_{\mathrm{BE}}(N,0,\beta)^{-1}-1\right)R(n,m,\alpha)\norm{\tau_m f}_{D_{\alpha-m/2}}^2\\
			& - C_{\mathrm{BE}}(N,0,\beta)^{-1}\,S(n,m,\alpha)\norm{\tau_mf}_{L^s}^2 \\
			\le{}& C_{\mathrm{BE}}(N,0,\beta)^{-1}\left(\norm{f}_{D_\alpha}^2 - S(n,m,\alpha)\norm{\tau_mf}_{L^s}^2\right). \qedhere
		\end{align*}
  The proof is complete.
\hfill{$\Box$}
\vskip0.2in
\begin{remark}\label{bbb1}
    Here, we illustrate the sharpness of the estimate \eqref{fractional-trace stability}. We set $N=n-m$ and $\beta=\alpha-\frac m2$ as defined above. As discussed below the statement of Theorem \ref{main thm1}, it suffices to construct a sequence $\{f_i\}_i\subset D_\alpha(\R^n)$ such that \eqref{bbb2} holds. First, thanks to the sharpness of \eqref{fractional stability} (see \cite{chen-frank}), there exists a sequence $\{g_i\}_i\subset D_{\beta}(\R^{N})$ such that such that $g_i\notin \mathcal{M}_{N,0,\beta}$, $\inf\limits_{v_0\in \mathcal{M}_{N,0,\beta}}\norm{g_i-v_0}_{D_{\beta}(\R^{N})}\rightarrow 0$, and for any $\gamma<2$,
\begin{align}\label{bbb3}
    \frac{\norm{g_i}_{D_\beta(\R^N)}^\gamma - S(N,0,\beta)^{\frac{\gamma}{2}}\norm{g_i}_{L^{\frac{2N}{N-2\beta}}(\R^{N})}^\gamma}{\inf\limits_{v_0\in \mathcal{M}_{N,0,\beta}}\norm{g_i-v_0}^\gamma_{D_\beta(\R^N)}}\rightarrow 0.
\end{align}
Next, we can take $f_i\in D_\alpha(\R^n)$ such that $\tau_m f_i=g_i$ and $f_i$ attains equality in \eqref{inequality of thm2} (see our proof of Theorem \ref{main thm1} above). Now we have $$R(n,m,\alpha)^\frac{1}{2} \norm{g_i}_{D_{\beta}(\R^{N})}= \norm{f_i}_{D_\alpha(\R^n)}$$ and $$R(n,m,\alpha)^\frac{1}{2}{\inf\limits_{v_0\in \mathcal{M}_{N,0,\beta}}} \norm{g_i-v_0}_{D_{\beta}(\R^{N})}= {\inf\limits_{v\in \mathcal{M}_{n,m,\alpha}}}\norm{f_i-v}_{D_\alpha(\R^n)}.$$
Note that we have $S(n,m,\alpha)=R(n,m,\alpha) S(N,0,\beta)$, it follows that $\{f_i\}_i$ satisfies \eqref{bbb2} as a straightforward conclusion from \eqref{bbb3}.

\end{remark}

\vskip0.1in
Next we show Theorem \ref{main+++} by further exploiting the proof of Theorem \ref{main thm1}.
\vskip0.2in

\noindent{\bf Proof of Theorem \ref{main+++}.}  We first discuss the best constant $C_{\mathrm{BE}}(n,m,\alpha)$ of \eqref{fractional-trace stability}. Define
		\begin{equation}\label{eeff}
			S_{\mathrm{Tr}}(f)\defeq\frac{\norm{f}_{D_\alpha}^2 - S(n,m,\alpha)\norm{\tau_mf}_{L^s}^2}{\dis^2(f,\mathcal{M}_{n,m,\alpha})},\quad \text{for }f\in D_\alpha(\R^n)\backslash\mathcal{M}_{n,m,\alpha},
		\end{equation}
		\begin{equation*}
			\mathcal{E}(f) \defeq \frac{\norm{f}_{D_{\beta}}^2-S(N,0,\beta)\norm{\tau_m f}_{L^s}^2}{\dis^2(f,\mathcal{M}_{N,0,\beta})},\quad \text{for }f\in D_{\beta}(\R^{n-m})\backslash\mathcal{M}_{N,0,\beta},
		\end{equation*}
		with $N=n-m,\,\beta=\alpha-m/2,\,s=\frac{2(n-m)}{n-2\alpha}$. The above proof shows that for any $f\in D_\alpha(\R^n)$,
		\begin{equation*}
			\begin{split}
				\dis^2(f,\mathcal{M}_{n,m,\alpha}) ={}& \inf_{h\in\mathcal{M}_{n,m,\alpha}} \norm{f-g}_{D_\alpha}^2 + \norm{g-h}_{D_\alpha}^2\\
				={}& \norm{f}_{D_\alpha}^2 - R(n,m,\alpha)\norm{\tau_mf}_{D_{\alpha-m/2}}^2 + \inf_{h\in\mathcal{M}_{n,m,\alpha}}\norm{g-h}_{D_\alpha}^2\\
				={}& \norm{f}_{D_\alpha}^2 - R(n,m,\alpha)\norm{\tau_mf}_{D_{\alpha-m/2}}^2 + R(n,m,\alpha)\inf_{\tilde h\in\mathcal{M}_{N,0,\beta}}\norm{\tau_mf - \tilde{h}}_{D_{\alpha-m/2}}^2\\
				\defeq {}& \delta_1(f) + R(n,m,\alpha)\dis^2(\tau_m f,\mathcal{M}_{N,0,\beta})
			\end{split}
		\end{equation*}
		with $g$ defined as in the proof of Theorem \ref{main thm1}. Hence, if $\tau_mf\notin \mathcal{M}_{N,0,\beta}$, by
		\begin{equation*}
			\begin{aligned}
				S_{\mathrm{Tr}}(g)&=\frac{R(n,m,\alpha)\norm{\tau_mf}_{D_{\alpha-m/2}}^2 - S(n,m,\alpha) \norm{\tau_m f}_{L^s}^2}{R(n,m,\alpha) \dis^2(\tau_m f,\mathcal{M}_{N,0,\beta})}\\
				&= \frac{\norm{\tau_mf}_{D_{\alpha-m/2}}^2 - S(N,0,\beta)\norm{\tau_m f}_{L^s}^2}{\dis^2(\tau_m f,\mathcal{M}_{N,0,\beta})} = \mathcal{E}(\tau_mf) \le 1,
			\end{aligned}
		\end{equation*}
		we get
		\begin{equation}\label{compare}
			S_{\mathrm{Tr}}(f) = \frac{\delta_1(f) + R(n,m,\alpha)\left(\norm{\tau_mf}_{D_{\alpha-m/2}}^2 - S(N,0,\beta)\norm{\tau_mf}_{L^s}^2\right)}{\delta_1(f) + R(n,m,\alpha)\dis^2(\tau_m f,\mathcal{M}_{N,0,\beta})} \ge \mathcal{E}(\tau_mf),
		\end{equation}
		and equality holds if and only if $f=g$. If $\tau_mf\in\mathcal{M}_{N,0,\beta}$, then $\dis^2(f,\mathcal{M}_{n,m,\alpha}) = \delta_1(f)$, $\norm{\tau_mf}_{D_{\alpha-m/2}}^2 = S(N,0,\beta)\norm{\tau_mf}_{L^s}^2$, and so
		\begin{equation*}
			S_{\mathrm{Tr}}(f) = \frac{\norm{f}_{D_\alpha}^2 - R(n,m,\alpha)S(N,0,\beta)\norm{\tau_mf}_{L^s}^2}{\delta_1(f)} =  1.
		\end{equation*}
		As a result,
		\begin{equation*}
			C_{\text{BE}}(n,m,\alpha)=\inf_{f\in D_{\alpha}(\R^n)\backslash \mathcal{M}_{n,m,\alpha}} S_{\mathrm{Tr}}(f) = \inf_{g\in D_{\alpha-m/2}(\R^{n-m})\backslash\mathcal{M}_{N,0,\beta}}\mathcal{E}(g)=C_{\mathrm{BE}}(N,0,\beta),
		\end{equation*}
		and the best constant of Theorem \ref{main thm1} is exactly that of Theorem \ref{thm: stability1}.
		
		Now based on the works in \cite{dol-fig,konig1, konig3,Chenlu}, we have the bound:
		\begin{equation*}
			\frac{\min\{K_{n-m,\alpha-m/2},1,2^{\frac{n+2\alpha-2m}{n-m}}-2\}}{4}\leq C_{\mathrm{BE}}(n,m,\alpha),
		\end{equation*}
		and
		\begin{equation*}
			C_{\mathrm{BE}}(n,m,\alpha)
			\begin{cases}
				<\min\left\{\frac{4\alpha-2m}{n+2\alpha+2-2m},2-2^{\frac{n-2\alpha}{n-m}}\right\},&n-m\geq 2,\\
				\leq \frac{4\alpha-2m}{2\alpha+3-m},&n-m=1,
			\end{cases}
		\end{equation*}
		where $K_{n,s}$ is a complicated number. We refer to \cite[Theorem 1.2]{Chenlu} for an explicit expression of $K_{n,s}$. The proof of $(1)$ and $(2)$ is complete.

\vskip0.23in In the following we show $(3)$. We consider the existence of minimizers for the stability constant $C_{\mathrm{BE}}(n,m,\alpha)$ in the spirit of K\"{o}nig's work in \cite{konig3}.   Let $(u_n)$ be a minimizing sequence for \eqref{eeff} with $\norm{u_n}_{D_\alpha(\R^n)}=1$.  We still set $N=n-m$ and $\beta=\alpha-m/2$. From the proof of $(1)$ and $(2)$, we may assume each $S_{\mathrm{Tr}}(u_n)<1$ to avoid the case $\tau_m u_n\in\mathcal{M}_{N,0,\beta}$. Denote by $\tilde{u}_n$ the minimizer of \eqref{inequality of thm2} with $\tau_m \tilde{u}_n = \tau_m u_n\in D_{\beta}(\R^{n-m})\backslash\mathcal{M}_{N,0,\beta}$, then $\mathcal{E}(\tau_m u_n) = S_{\mathrm{Tr}}(\tilde{u}_n)\le S_{\mathrm{Tr}}(u_n)$ and so $\lim\limits_{n\rightarrow\infty} \mathcal{E}(\tau_m u_n) = \lim\limits_{n\rightarrow\infty} S_{\mathrm{Tr}}(u_n) = C_{\mathrm{BE}}(n,m,\alpha)$, and $\left(\frac{\tau_m u_n}{\norm{\tau_m u_n}_{D_{\beta}}}\right)_n$ is a minimizing sequence for $\mathcal{E}$ with norm 1. Using \cite[Theorem 1.2]{konig3}, there exists a function $v\in D_{\beta}(\R^{n-m})\backslash \mathcal{M}_{N,0,\beta}$ such that, up to a subsequence, $\frac{\tau_m u_n}{\norm{\tau_m u_n}_{D_{\beta}}}$ converge to $v$ strongly in $D_{\beta}(\R^{n-m})$, and in addition $\mathcal{E}(v) = C_{\mathrm{BE}}(n,m,\alpha)$.

\vskip0.16in		
		We now claim $\lim\limits_{n\rightarrow \infty} \norm{\tau_m u_n}_{D_{\beta}}^2=R(n,m,\alpha)^{-1}$. By \eqref{compare},
		\begin{align*}
			\left(1-S_{\mathrm{Tr}}(u_n)\right)\delta_1(u_n) ={}& R(n,m,\alpha) S_{\mathrm{Tr}}(u_n)\dis^2(\tau_m u_n,\mathcal{M}_{N,0,\beta})\\
			&-R(n,m,\alpha)\left(\norm{\tau_m u_n}_{D_{\beta}}^2 - S(N,0,\beta)\norm{\tau_m u_n}_{L^s}^2\right)\\
			={}& R(n,m,\alpha)\dis^2(\tau_m u_n,\mathcal{M}_{N,0,\beta})\big(S_{\mathrm{Tr}}(u_n)-\mathcal{E}(\tau_mu_n)\big).
		\end{align*}
		Since $\lim\limits_{n\to \infty} S_{\mathrm{Tr}}(u_n)=\lim\limits_{n\to \infty} \mathcal{E}(\tau_m u_n) = C_{\mathrm{BE}}<1$, the right side tends to 0 as $n\to \infty$. On the other hand, the left side is larger than $\frac{1-C_{\mathrm{BE}}(n,m,\alpha)}{2}>0$ if $n$ is large enough, and so
		\begin{equation*}
			\begin{aligned}
				\lim\limits_{n\to \infty} \delta_1(u_n) = 0 \implies \lim\limits_{n\to \infty} \norm{\tau_m u_n}_{D_{\beta}}^2 =& \lim\limits_{n\to \infty} R(n,m,\alpha)^{-1}\big(\norm{u_n}_{D_\alpha}^2 - \delta_1(u_n)\big)\\
				=&\;R(n,m,\alpha)^{-1}.
			\end{aligned}
		\end{equation*}
		Hence, up to a subsequence, $\tau_m u_n\to R(n,m,\alpha)^{-1} v$ strongly in $D_{\beta}(\R^{n-m})$. As a result, if we set $u\in D_\alpha(\R^n)$ to be the minimizer of \eqref{inequality of thm2} with $\tau_m u=R(n,m,\alpha)^{-1}v$, then
		\begin{align*}
			\norm{u_n-u}_{D_\alpha}^2 ={}& \norm{u_n-\tilde{u}_n}_{D_\alpha}^2 + \norm{\tilde{u}_n-\tilde{v}}_{D_{\alpha}}^2\\
			={}& \delta_1(u_n) + R(n,m,\alpha)\norm{\tau_m u_n - R(n,m,\alpha)^{-1} v}_{D_{\beta}}^2 \to 0,\quad n\to \infty,
		\end{align*}
		yielding that $u_n\to u$ strongly in $D_\alpha$ up to a subsequence. The only thing left is to show $S_{\mathrm{Tr}}(u)=C_{\mathrm{BE}}(n,m,\alpha)$. By the definition of $u$, we have
		\begin{equation*}
			S_{\mathrm{Tr}}(u) = \mathcal{E}(\tau_m u) = \mathcal{E} (v) = C_{\mathrm{BE}}(n,m,\alpha). \qedhere
		\end{equation*}
  The proof of $(3)$ is complete.
\hfill$\Box$


\section{Some consequences of the gradient stability result}\label{sec4}
	
In this section we give some corollaries of the gradient stability result \eqref{fractional-trace stability}.
We first prove the following theorem concerning refined Sobolev embedding in domains with finite measure.
	\begin{theorem}\label{cons1}
		Let $0\leq m<n, \frac{m}{2}<\alpha<\frac{n}{2}$ and $s=\frac{2(n-m)}{n-2\alpha},q_1=\frac{n-m}{n-2\alpha},q_2=\frac{n-m}{n-\alpha-\frac{m}{2}}$. Then there exists a constant $C$ depending only on $n, \alpha$ such that, for any domain $\Omega\subset\mathbb{R}^n$ with $A:=\mathcal{H}^{n-m}(\Omega\cap\mathbb{R}^{n-m})<\infty$, and any $u\in D_\alpha(\R^n)$ with $\tau_m{u}=0$ in $\Omega^c$, we have
		\begin{equation}\label{www1}
			\Vert u\Vert_{D_\alpha(\R^n)}^2-S(n,m,\alpha)\Vert \tau_m{u}\Vert_{L^s(\R^{n-m})}^2\geq CA^{-\frac{1}{q_1}}\Vert \tau_m{u}\Vert_{L_w^{q_1}(\Omega\cap\mathbb{R}^{n-m})}^2.
		\end{equation}
		Here $\mathcal{H}$ denotes the Hausdorff measure. If $\alpha-\frac{m}{2}\in\mathbb{N}$, then we also have
		\begin{equation}\label{www2}
			\Vert u\Vert_{D_\alpha(\R^n)}^2-S(n,m,\alpha)\Vert \tau_m{u}\Vert_{L^s(\R^{n-m})}^2\geq CA^{-\frac{1}{q_1}}\Vert \Delta^{\frac{2\alpha-m}{4}}(\tau_m{u})\Vert_{L_w^{q_2}(\Omega\cap\mathbb{R}^{n-m})}^2.
		\end{equation}
		Moreover, the weak norm on the right-hand side cannot be replaced by the strong norm.
	\end{theorem}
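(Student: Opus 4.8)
The plan is to reduce everything to the fractional Sobolev inequality on $\R^{n-m}$ via the reduction principle (Theorem \ref{thm: ineq2}) together with the gradient stability estimate \eqref{fractional-trace stability}, and then mimic the Brézis--Lieb argument. First, given $u\in D_\alpha(\R^n)$ with $\tau_m u = 0$ on $\Omega^c$, write $w := \tau_m u \in D_{\alpha-m/2}(\R^{n-m})$ by Theorem \ref{thm: ineq2}, and recall from the proof of Theorem \ref{main thm1} (see \eqref{distancce of f and g} and Remark \ref{optr}) that
\begin{equation*}
\Vert u\Vert_{D_\alpha(\R^n)}^2 - S(n,m,\alpha)\Vert w\Vert_{L^s(\R^{n-m})}^2 \ge R(n,m,\alpha)\Bigl(\Vert w\Vert_{D_{\alpha-m/2}(\R^{n-m})}^2 - S(N,0,\beta)\Vert w\Vert_{L^s(\R^{n-m})}^2\Bigr),
\end{equation*}
with $N = n-m$, $\beta = \alpha-m/2$, and $s = \frac{2N}{N-2\beta} = \frac{2(n-m)}{n-2\alpha}$. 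So it suffices to establish the corresponding improved Sobolev embedding on $\R^N$ for functions $w$ supported (on the trace level) in $\Omega\cap\R^m$, which has finite $\mathcal{H}^m$-measure. This is exactly the fractional analogue of the Brézis--Lieb result, carried out by Chen--Frank--Weth in \cite{chen-frank} using the stability inequality \eqref{fractional stability}; I would reproduce their argument.

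The key steps on $\R^N$ are as follows. By \eqref{fractional stability}, $\Vert w\Vert_{D_\beta}^2 - S(N,0,\beta)\Vert w\Vert_{L^s}^2 \ge C_{\mathrm{BE}}(N,0,\beta)\,\dis^2(w,\mathcal{M}_{N,0,\beta})$, so it remains to bound $\dis^2(w,\mathcal{M}_{N,0,\beta})$ from below by a weak-$L^{q_1}$ quantity on the support. Let $\varphi\in\mathcal{M}_{N,0,\beta}$ be a near-optimal extremal; then for the Lorentz/weak norm, by the triangle inequality in $L_w^{q_1}$ (with $q_1 = \frac{N}{N-2\beta} = \frac{n-m}{n-2\alpha} = s/2 > 1$, so the weak space is a genuine quasi-Banach space with an equivalent norm),
\begin{equation*}
\Vert w\Vert_{L_w^{q_1}(\Omega\cap\R^m)} \lesssim \Vert w - \varphi\Vert_{L_w^{q_1}(\R^N)} + \Vert \varphi\Vert_{L_w^{q_1}(\Omega\cap\R^m)}.
\end{equation*}
The first term is controlled by the Sobolev embedding $D_\beta(\R^N)\hookrightarrow L^s(\R^N) = L^{2q_1}(\R^N)\hookrightarrow L_w^{q_1}$... no — rather, by the \emph{improved} embedding $D_\beta\hookrightarrow L_w^{q_1}$, which one checks directly: every $\varphi\in\mathcal{M}_{N,0,\beta}$ lies in $L_w^{q_1}(\R^N)$ with a uniform bound (by scaling invariance the weak-$q_1$ norm of a bubble is a fixed constant times $\Vert\varphi\Vert_{D_\beta}$), and since $w-\varphi\in D_\beta$ one has $\Vert w-\varphi\Vert_{L_w^{q_1}}\lesssim\Vert w-\varphi\Vert_{D_\beta} = \dis(w,\mathcal{M}_{N,0,\beta})$. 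The second term is estimated by Hölder's inequality in Lorentz spaces on the finite-measure set: $\Vert\varphi\Vert_{L_w^{q_1}(\Omega\cap\R^m)}\le \mathcal{H}^m(\Omega\cap\R^m)^{1/q_1 - 1/(2q_1)}\,\Vert\varphi\Vert_{L^{2q_1}(\Omega\cap\R^m)} \lesssim \mathcal{H}^m(\Omega\cap\R^m)^{1/(2q_1)}\,\Vert\varphi\Vert_{D_\beta}$, and $\Vert\varphi\Vert_{D_\beta}\le\Vert w\Vert_{D_\beta}$ up to constants. Combining and rearranging yields
\begin{equation*}
\Vert w\Vert_{L_w^{q_1}(\Omega\cap\R^m)}^2 \lesssim \dis^2(w,\mathcal{M}_{N,0,\beta}) + \mathcal{H}^m(\Omega\cap\R^m)^{1/q_1}\Vert w\Vert_{D_\beta}^2,
\end{equation*}
and then a standard dichotomy — either $\dis^2(w,\mathcal{M}_{N,0,\beta})$ dominates (use \eqref{fractional stability}), or it is small and one uses that the deficit is comparable to $\Vert w\Vert_{D_\beta}^2$ away from the manifold while on the manifold the weak norm on a finite set is controlled directly — gives \eqref{www1} after multiplying through by $\mathcal{H}^m(\Omega\cap\R^m)^{-1/q_1}$. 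The bound \eqref{www2} with $\Vert\Delta^{(2\alpha-m)/4}(\tau_m u)\Vert_{L_w^{q_2}}$ when $\beta = \alpha-m/2\in\N$ follows the same scheme, now using that $\Delta^{\beta/2}$ is an isometry $D_\beta(\R^N)\to L^2(\R^N)$ and that the bubbles $\varphi$ have $\Delta^{\beta/2}\varphi\in L_w^{q_2}(\R^N)$ with $q_2 = \frac{N}{N-\beta} = \frac{n-m}{n-\alpha-m/2}$ (this is the critical Sobolev exponent for $L^2$-functions, so $L^2\hookrightarrow$ nothing, but $\varphi$ itself being a bubble forces polynomial decay of $\Delta^{\beta/2}\varphi$); the finite-measure Hölder step is identical.

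Finally, the optimality claim — that $L_w^{q_1}$ cannot be upgraded to the strong $L^{q_1}$ (or $L^s$) norm — is shown, as in \cite{Brezis-Lieb, chen-frank}, by a counterexample: take $\Omega$ a fixed bounded set and let $w_k$ be a sequence of bubbles $\varphi_k\in\mathcal{M}_{N,0,\beta}$ with concentration parameter $\gamma_k\to 0$ (or $\to\infty$) so that $w_k$ converges weakly to $0$ in $D_\beta$ but $\Vert w_k\Vert_{L^{q_1}(\Omega)}$ stays bounded below; cut off to make $\tau_m u$ supported in $\Omega$, which changes the deficit by a negligible amount. Then the left side of the putative strong inequality tends to $0$ (the deficit vanishes since $w_k$ approaches the manifold) while the right side does not, a contradiction. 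I expect the \textbf{main obstacle} to be the bookkeeping with Lorentz-space interpolation and Hölder inequalities on finite-measure sets — in particular verifying that the weak-$L^{q_1}$ quasi-norm is equivalent to a genuine norm in the range $q_1 > 1$ we are in, and tracking that all implicit constants depend only on $n$ and $\alpha$ (not on $\Omega$) — together with checking that the trace-support condition $\tau_m u = 0$ on $\Omega^c$ interacts correctly with the reduction principle, since $g$ (hence $v$) in the decomposition need not have its own trace supported in $\Omega$, only $\tau_m g = \tau_m u$ does, which is all that is used.
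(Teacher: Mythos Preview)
Your reduction to the case $m=0$ via Theorem~\ref{thm: ineq2} and Remark~\ref{optr} is correct and matches the paper. The core argument on $\R^N$ (with $N=n-m$, $\beta=\alpha-m/2$), however, has a genuine gap.

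The claim that $D_\beta(\R^N)\hookrightarrow L^{q_1}_w(\R^N)$ is false: under $\phi\mapsto\phi(\lambda\,\cdot)$ the $D_\beta$-norm scales as $\lambda^{\beta-N/2}$ while the $L^{q_1}_w$-norm scales as $\lambda^{-(N-2\beta)}$, and these disagree since $\beta<N/2$. Correspondingly, the weak-$q_1$ norm of a $D_\beta$-normalized bubble is \emph{not} scale-invariant --- it equals a constant times $\lambda^{-(N-2\beta)/2}$. Once this step fails, your displayed bound degenerates to $\norm{w}_{L^{q_1}_w(\Omega)}^2\lesssim \dis^2(w,\mathcal{M})+|\Omega|^{1/q_1}\norm{w}_{D_\beta}^2$, and the proposed ``dichotomy'' cannot recover the theorem: when $w$ is close to a concentrating bubble the second term dominates while the deficit is arbitrarily small.

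What the paper does differently is to prove the sharper lemma
\[
\norm{w}_{L^{q_1}_w(\Omega)}\ \lesssim\ |\Omega|^{1/(2q_1)}\,\dis(w,\mathcal{M}_{N,0,\beta})
\]
directly, by contradiction (Lemma~\ref{cons2}; the $q_1$ case is exactly \cite{chen-frank}). The missing ingredient in your sketch is a \emph{lower} bound on $\dis(w,\mathcal{M})$ coming from the support hypothesis: since $w=0$ on $\Omega^c$, the tail of the nearly-optimal bubble $\varphi=c\,U[z,\lambda]$ outside $\Omega$ contributes to $\norm{w-\varphi}_{D_\beta}$, forcing $\dis(w,\mathcal{M})\gtrsim\lambda^{-(N-2\beta)/2}$ once one knows $\lambda\to\infty$ (itself a consequence of $|\Omega|<\infty$ and $\dis\to 0$). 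This lower bound exactly matches the bubble's global weak norm $\norm{\varphi}_{L^{q_1}_w(\R^N)}\approx\lambda^{-(N-2\beta)/2}$, so in the triangle inequality $\norm{w}_{L^{q_1}_w(\Omega)}\le \norm{w-\varphi}_{L^{q_1}_w(\Omega)}+\norm{\varphi}_{L^{q_1}_w(\R^N)}$ \emph{both} terms are controlled by $\dis(w,\mathcal{M})$ (the first via H\"older on the finite-measure set and the Sobolev embedding into $L^{2q_1}$). The same mechanism, with $\Delta^{\beta/2}\varphi$ in place of $\varphi$, gives \eqref{www2}.
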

	
	The proof of \eqref{www1} and \eqref{www2} can be reduced to the case $m=0$ using Theorem \ref{thm: ineq2}. Based on Theorem \ref{main thm1}, it suffices to show the following lemma:
	
	\begin{lemma}\label{cons2}
		Let $0<\alpha<\frac{n}{2},q_1=\frac{n}{n-2\alpha},q_2=\frac{n}{n-\alpha}$. Then there exists a constant $C$ depending only on $\alpha,n$ such that
		\begin{equation}\label{ww1}
			\left\Vert u\right\Vert_{L^{q_1}_{w}(\Omega)} \leq C|\Omega|^{\frac{1}{2q_1}}d(u,\mathcal{M}_{n,0,\alpha})
		\end{equation}
		for all subdomains $\Omega\in\mathbb{R}^{n}$ with $|\Omega|<\infty$ and all $u\in D_{\alpha}(\R^n)$ with $u=0$ in $\Omega^c$.
		
		If in addition $\alpha\in\mathbb{N}$, then the following estimate holds:
		\begin{equation}\label{ww2}
			\left\Vert \Delta^{\frac{\alpha}{2}}u\right\Vert_{L^{q_2}_{w}(\Omega)} \leq C|\Omega|^{\frac{1}{2q_1}}d(u,\mathcal{M}_{n,0,\alpha}).
		\end{equation}
	\end{lemma}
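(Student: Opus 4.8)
The plan is to project $u$ onto the extremal manifold and estimate the two resulting pieces separately, extracting the whole quantitative gain from the constraint that $u$ vanishes on $\Omega^c$. Fix $\varepsilon>0$ and choose an extremal $v(x)=A(\gamma^2+\abs{x-a}^2)^{-(n-2\alpha)/2}\in\mathcal M_{n,0,\alpha}$ of the fractional Sobolev inequality (Theorem \ref{thm: ineq1}), with $A,\gamma>0$, $a\in\R^n$, such that $\norm{u-v}_{D_\alpha}\le d(u,\mathcal M_{n,0,\alpha})+\varepsilon$, and set $w\defeq u-v$; the key point is that $w=-v$ on $\Omega^c$. Since $q_1=\tfrac n{n-2\alpha}>1$, the space $L^{q_1}_w$ is a normable Lorentz space, so $\norm u_{L^{q_1}_w(\Omega)}\lesssim\norm v_{L^{q_1}_w(\Omega)}+\norm w_{L^{q_1}_w(\Omega)}$. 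The $w$-term is harmless: Chebyshev's inequality, Hölder on the finite-measure set $\Omega$ (using $\tfrac1{q_1}-\tfrac1s=\tfrac1{2q_1}$ with $s=2q_1$) and the fractional Sobolev inequality \eqref{asdf} give $\norm w_{L^{q_1}_w(\Omega)}\le\abs\Omega^{1/(2q_1)}\norm w_{L^s(\R^n)}\lesssim\abs\Omega^{1/(2q_1)}\norm w_{D_\alpha}$. Thus everything reduces to bounding $\norm v_{L^{q_1}_w(\Omega)}$ by $C\abs\Omega^{1/(2q_1)}\norm w_{D_\alpha}$, after which $\varepsilon\to 0$ finishes.

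For the bubble term set $\rho\defeq(\abs\Omega/\omega_n)^{1/n}$. The superlevel set $\{v>t\}$ is a ball of radius $r(t)$ with $t\,r(t)^{n-2\alpha}\le A$ and $t\mapsto t\,r(t)^{n-2\alpha}$ decreasing on its domain; splitting the distribution function of $v\mathbf 1_\Omega$ at the level where $\omega_n r(t)^n=\abs\Omega$ yields the elementary estimate
\begin{equation*}
\norm v_{L^{q_1}_w(\Omega)}\;\lesssim\;A\Big(\frac{\rho^2}{\gamma^2+\rho^2}\Big)^{(n-2\alpha)/2}.
\end{equation*}
I then distinguish two regimes, using $\abs\Omega^{1/(2q_1)}\approx\rho^{(n-2\alpha)/2}$ and the explicit identities $\norm v_{L^s(\R^n)}^s\approx A^s\gamma^{-n}$, $\norm v_{L^\infty}^s=A^s\gamma^{-2n}$, $\norm v_{L^s(\{\abs{x-a}>R\})}^s\gtrsim A^sR^{-n}$ for $R\ge\gamma$. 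If $\rho\gtrsim\gamma$: choose $R\approx\rho$ with $R\ge\gamma$ and $\abs\Omega$ a small dimensional fraction of $\abs{B_R}$; since $\Omega^c\supseteq\{\abs{x-a}>R\}\setminus\Omega$ and $w=-v$ there, comparing $\norm v_{L^s(\{\abs{x-a}>R\})}^s\gtrsim A^sR^{-n}$ with $\norm v_{L^s(\{\abs{x-a}>R\}\cap\Omega)}^s\le\norm v_{L^\infty(\{\abs{x-a}>R\})}^s\abs\Omega\approx A^sR^{-2n}\abs\Omega$ and invoking fractional Sobolev gives $\norm w_{D_\alpha}^s\gtrsim A^s/\abs\Omega$, i.e.\ $A\lesssim\abs\Omega^{1/(2q_1)}\norm w_{D_\alpha}$; since $\tfrac{\rho^2}{\gamma^2+\rho^2}\le1$ this closes the case. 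If $\rho\ll\gamma$: now $\norm v_{L^s(\Omega)}^s\le\norm v_{L^\infty}^s\abs\Omega$ is a genuinely small fraction of $\norm v_{L^s(\R^n)}^s$, so $\norm w_{D_\alpha}^s\gtrsim\norm v_{L^s(\Omega^c)}^s=\norm v_{L^s(\R^n)}^s-\norm v_{L^s(\Omega)}^s\gtrsim A^s\gamma^{-n}$, i.e.\ $\norm w_{D_\alpha}\gtrsim A\gamma^{-(n-2\alpha)/2}$; plugging this into the displayed estimate and using $\rho<\gamma$ gives $\norm v_{L^{q_1}_w(\Omega)}\lesssim A(\rho/\gamma)^{n-2\alpha}=\big(A\gamma^{-(n-2\alpha)/2}\big)\rho^{(n-2\alpha)/2}(\rho/\gamma)^{(n-2\alpha)/2}\lesssim\norm w_{D_\alpha}\abs\Omega^{1/(2q_1)}$. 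This proves \eqref{ww1}.

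For \eqref{ww2} (now $\alpha\in\N$, $q_2=\tfrac n{n-\alpha}$) I would run the same scheme on $\Delta^{\alpha/2}u=\Delta^{\alpha/2}v+\Delta^{\alpha/2}w$: the $w$-piece uses $\norm{\Delta^{\alpha/2}w}_{L^2}=\norm w_{D_\alpha}$ and Hölder with $\tfrac1{q_2}-\tfrac12=\tfrac1{2q_1}$, and for the $v$-piece the key input is the two-sided bound $\abs{\Delta^{\alpha/2}v(x)}\approx A(\gamma^2+\abs{x-a}^2)^{-(n-\alpha)/2}$. This holds because the extremal solves $(-\Delta)^\alpha v=c(n,\alpha)v^{(n+2\alpha)/(n-2\alpha)}$ with $c(n,\alpha)>0$, so $\abs{\Delta^{\alpha/2}v}=(-\Delta)^{-\alpha/2}\big[(-\Delta)^\alpha v\big]=c(n,\alpha)\,I_\alpha*v^{(n+2\alpha)/(n-2\alpha)}$ (with $I_\alpha$ the Riesz kernel) is a \emph{positive} function with exactly that profile, and $(n-\alpha)q_2/2=n/2$ exactly as $(n-2\alpha)q_1/2=n/2$. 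The distribution estimate and the two cases then go through verbatim, the only new arithmetic being $n-\alpha-\tfrac{n-2\alpha}2=\tfrac n2>0$. Finally, that the strong norm cannot replace the weak one is seen from a truncated bubble: with $v(x)=(1+\abs x^2)^{-(n-2\alpha)/2}$, $\chi_R$ a cutoff equal to $1$ on $B_R$ and $0$ off $B_{2R}$, $u_R=\chi_R v$, $\Omega_R=B_{2R}$, one has $\norm{u_R}_{L^{q_1}(\Omega_R)}^{q_1}\gtrsim\int_{1<\abs x<R}\abs x^{-n}\,dx\approx\log R$ (and likewise $\norm{\Delta^{\alpha/2}u_R}_{L^{q_2}(\Omega_R)}^{q_2}\gtrsim\log R$, using $\Delta^{\alpha/2}u_R=\Delta^{\alpha/2}v$ on $B_{R/2}$ and the lower bound on $\abs{\Delta^{\alpha/2}v}$), whereas $\abs{\Omega_R}^{1/(2q_1)}d(u_R,\mathcal M_{n,0,\alpha})\lesssim R^{(n-2\alpha)/2}\cdot R^{-(n-2\alpha)/2}\lesssim1$ since $d(u_R,\mathcal M_{n,0,\alpha})\le\norm{(1-\chi_R)v}_{D_\alpha}\lesssim R^{-(n-2\alpha)/2}$ by a routine tail/commutator estimate; letting $R\to\infty$ does it.

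The step I expect to be the main obstacle is the two-regime analysis of the bubble term. A bubble whose core lies inside $\Omega$ has $\norm v_{L^{q_1}_w(\Omega)}\approx A$, independent of $\abs\Omega$, so no improvement can come from finiteness of $\abs\Omega$ alone: the gain must be squeezed entirely out of $w=-v$ on $\Omega^c$, which forces $w$ to carry the tail mass of $v$ — concentrated at scale $\gamma$ when $\rho\ll\gamma$ and at scale $\rho$ when $\rho\gtrsim\gamma$ — and the delicate part is fixing the comparison radius $R$ and the threshold between the two regimes so that every implied constant stays purely dimensional. For \eqref{ww2} the additional difficulty is the two-sided pointwise bound on $\Delta^{\alpha/2}v$, where positivity of $(-\Delta)^{\alpha/2}v$ is precisely what rules out cancellation in the leading-order asymptotics.
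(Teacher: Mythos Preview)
Your argument is correct and takes a genuinely different route from the paper's. The paper does not prove \eqref{ww1} at all (it cites \cite{chen-frank}), and for \eqref{ww2} it argues by contradiction: normalize $\abs\Omega=1$ and $\norm{u_k}_{D_\alpha}=1$, assume the ratio $d(u_k,\mathcal M)/\norm{\Delta^{\alpha/2}u_k}_{L^{q_2}_w(\Omega)}\to 0$, use H\"older to see the denominator is bounded, hence $d(u_k,\mathcal M)\to 0$; this forces the approximating bubble $c_kU[z_k,\lambda_k]$ to concentrate ($\lambda_k\to\infty$), and then a one-line tail computation on $\Omega^c$ gives $d(u_k,\mathcal M)\gtrsim\lambda_k^{(2\alpha-n)/2}$, while the triangle inequality and the scaling relation $\norm{\Delta^{\alpha/2}U[z_k,\lambda_k]}_{L^{q_2}_w(\R^n)}=\lambda_k^{(2\alpha-n)/2}\norm{\Delta^{\alpha/2}U[0,1]}_{L^{q_2}_w}$ give $\norm{\Delta^{\alpha/2}u_k}_{L^{q_2}_w(\Omega)}\lesssim d(u_k,\mathcal M)$, a contradiction. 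Your approach instead projects $u$ onto the manifold once and for all and runs a direct two-regime analysis in the scale ratio $\rho/\gamma$, extracting the lower bound on $\norm w_{D_\alpha}$ from the identity $w=-v$ on $\Omega^c$ via an explicit $L^s$-tail computation. The paper's proof is shorter and avoids casework, but yields no information on the constant; your argument is constructive and in principle gives an explicit $C(n,\alpha)$, at the price of the scale dichotomy you flag as the main obstacle. Two minor remarks: for \eqref{ww2} you only need the \emph{upper} bound $\abs{\Delta^{\alpha/2}v}\lesssim A(\gamma^2+\abs{x-a}^2)^{-(n-\alpha)/2}$ (the $\norm w_{D_\alpha}$ lower bounds from the two regimes are inherited verbatim from the $v$-analysis, since they use $w=-v$ on $\Omega^c$, not $\Delta^{\alpha/2}v$), so the positivity argument via the Riesz potential, while correct, is only really needed for your sharpness counterexample; and the sharpness claim you prove is not part of the lemma as stated, though it is asserted for the trace version in Theorem~\ref{cons1}.
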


\begin{remark}
		The inequality \eqref{ww1} has been proved by Chen, Frank and Weth in \cite{chen-frank}. When $\alpha=1$, \eqref{ww2} was proved by Br\'ezis and Lieb in \cite{Brezis-Lieb}. Although the proof of \eqref{ww2} for general $\alpha$ are similar to that applied in \cite{chen-frank}, for completeness, we give a detailed proof below.
	\end{remark}
	
\vskip0.3in
\noindent{\bf Proof of Lemma \ref{cons2}.} Assume \eqref{ww2} fails and $|\Omega|=1$. Then there exists a sequence $(u_n)\subset H_0^1(\Omega)\backslash\{0\}$ such that
		\begin{equation}\label{con1}
			\begin{aligned}
				\frac{d(u_n,\mathcal{M}_{n,0,\alpha})}{\left\Vert \Delta^{\frac{\alpha}{2}}u_n\right\Vert_{L^{q_2}_{w}(\Omega)}}\rightarrow 0.
			\end{aligned}
		\end{equation}
		By homogeneity, we can assume $\left\Vert \Delta^{\frac{\alpha}{2}}u_n\right\Vert_{L^2}=1$. By the H\"{o}lder inequality, $\left\Vert \Delta^{\frac{\alpha}{2}}u_n\right\Vert_{L^{q_2}_{w}(\Omega)}$ is bounded and hence $d(u_n,\mathcal{M}_{n,0,\alpha})\to 0$. This shows that there exist $c_n\rightarrow 1, z_n\in\Omega$ and $\lambda_n\rightarrow\infty$ such that
		\begin{equation*}
			\begin{aligned}
				d(u_n,\mathcal{M}_{n,0,\alpha})=\left\Vert \Delta^{\frac{\alpha}{2}}(u_n-c_nU[z_n,\lambda_n])\right\Vert_{L^2},
			\end{aligned}
		\end{equation*}
		with $U[z,\lambda] = \left(\frac{\lambda}{1+\lambda^2\abs{x-z}^2}\right)^{\frac{n-2\alpha}{2}}$. A direct computation yields
		\begin{equation}\label{ww4}
			\begin{aligned}
				d(u_n,\mathcal{M}_{n,0,\alpha})^2
				&\geq \left\Vert \Delta^{\frac{\alpha}{2}}c_nU[0,\lambda_n]\right\Vert_{L^2(\Omega^c)}^2\\
				&\gtrsim \left\Vert \Delta^{\frac{\alpha}{2}}U[0,\lambda_n]\right\Vert_{L^2(B_2(0)^c)}^2\\
				&\gtrsim \int_{B_2(0)^c}\frac{\lambda_n^{2\alpha-n}}{|x|^{2(n-\alpha)}}\gtrsim \lambda_n^{2\alpha-n}.
			\end{aligned}
		\end{equation}
		Therefore we have
		\begin{equation*}
			\begin{aligned}
				\left\Vert \Delta^{\frac{\alpha}{2}}u_n\right\Vert_{L^{q_2}_{w}(\Omega)}
				&\leq \left\Vert \Delta^{\frac{\alpha}{2}}(u_n-c_nU[z_n,\lambda_n])\right\Vert_{L^{q_2}_{w}(\Omega)}+\left\Vert \Delta^{\frac{\alpha}{2}}c_nU[z_n,\lambda_n]\right\Vert_{L^{q_2}_{w}(\Omega)}\\
				&\lesssim \left\Vert \Delta^{\frac{\alpha}{2}}(u_n-c_nU[z_n,\lambda_n])\right\Vert_{L^2}+\lambda_n^{\frac{2\alpha-n}{2}}\left\Vert \Delta^{\frac{\alpha}{2}}U[0,1]\right\Vert_{L^{q_2}_{w}(\R^n)}\\
				&\lesssim d(u_n,\mathcal{M}_{n,0,\alpha}).
			\end{aligned}
		\end{equation*}
		This contradicts \eqref{con1}. \hfill$\Box$

\vskip0.36in

	By further exploiting the proof above, we can establish similar results for strips.
	\begin{theorem}\label{cons3}
		Assume $m,n\in\mathbb{N}^{+}$ satisfy $\frac{n}{4}\leq m<\frac{n}{2}$. Let $q=\frac{n}{n-2m}$. Then there exists a constant $C$ depending only on $m,n$ such that
		\begin{equation}\label{ww3}
			\Vert u\Vert_{D_m(\R^n)}^2-S(n,0,m)\Vert u\Vert_{L^{2q}(\R^n)}^2\geq C\left\Vert u\right\Vert_{L^{q}_{w}(I\times \R^{n-1})}^2
		\end{equation}
		for all $u\in D_{m}(\R^n)$ whose support belongs to $I\times\R^{n-1}$; here $I=[-1,1]$ is an interval in $\R$.
	\end{theorem}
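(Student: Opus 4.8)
The plan is to mimic the strategy used for Theorem \ref{cons1}, i.e.\ to run a contradiction/compactness argument that reduces \eqref{ww3} to the already-established gradient stability of the fractional Sobolev inequality (Theorem \ref{thm: stability1} with $N=n$, $\beta=m$) combined with the decay estimates for the bubble $U[0,\lambda]$ near infinity. The key difference with Lemma \ref{cons2} is that the constraining set is now an infinite slab $I\times\R^{n-1}$ rather than a set of finite measure, so a pure measure-based Hölder bound is unavailable; instead we must exploit that the width of the slab in the first variable is fixed (equal to $2$) and that any bubble $U[z,\lambda]$ with $z\in I\times\R^{n-1}$ whose scale $\lambda$ is bounded cannot be too close to having its support in the slab, while a bubble with $\lambda\to\infty$ concentrates and is cheap to cut off.

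First I would argue by contradiction: suppose \eqref{ww3} fails, so there is a sequence $(u_k)\subset D_m(\R^n)$ with support in $I\times\R^{n-1}$, normalized by $\norm{u_k}_{D_m(\R^n)}^2 - S(n,0,m)\norm{u_k}_{L^{2q}(\R^n)}^2 \to 0$ after suitable scaling, yet with $\norm{u_k}_{L^q_w(I\times\R^{n-1})}$ bounded away from $0$; by the scaling invariance of all three quantities under the dilation-translation group of the fractional Sobolev inequality we may renormalize so that $\norm{\Delta^{m/2}u_k}_{L^2}=1$. By Theorem \ref{thm: stability1}, $\dis(u_k,\mathcal{M}_{n,0,m})\to 0$, so there are $c_k\to 1$, $z_k\in\R^n$, $\lambda_k>0$ with $\dis(u_k,\mathcal{M}_{n,0,m}) = \norm{\Delta^{m/2}(u_k - c_k U[z_k,\lambda_k])}_{L^2}$. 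Because $u_k$ is supported in the slab, the portion of $c_k U[z_k,\lambda_k]$ living outside the slab must be controlled by $\dis(u_k,\mathcal{M}_{n,0,m})$; I would now split into cases on the behaviour of $(z_k,\lambda_k)$. If $\lambda_k$ stays bounded and $z_k$ stays in a bounded region (after using the translation invariance along the $\R^{n-1}$ directions, which preserves the slab, to fix the last $n-1$ coordinates of $z_k$), then $\norm{\Delta^{m/2}U[z_k,\lambda_k]}_{L^2((I\times\R^{n-1})^c)}$ is bounded below by a positive constant — the bubble has a fixed positive tail outside any fixed slab of width $2$ — forcing $\dis(u_k,\mathcal{M}_{n,0,m})\not\to 0$, a contradiction. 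If instead $\lambda_k\to\infty$, then by the decay estimate (as in \eqref{ww4}) $\norm{\Delta^{m/2}U[z_k,\lambda_k]}_{L^2(B_2(z_k)^c)}\gtrsim \lambda_k^{(2m-n)/2}\to 0$ only slowly enough that it still bounds the tail; precisely, $\dis(u_k,\mathcal{M}_{n,0,m})^2 \gtrsim \lambda_k^{2m-n}$, and then the triangle inequality together with the Hölder-type bound $\norm{\Delta^{m/2}U[z_k,\lambda_k]}_{L^q_w} \lesssim \lambda_k^{(2m-n)/2}\norm{\Delta^{m/2}U[0,1]}_{L^q_w(\R^n)}$ (valid since $\alpha=m\in\N$ so $\Delta^{m/2}$ makes sense classically, and $\norm{\Delta^{m/2}U[0,1]}_{L^q_w}<\infty$ because $|\Delta^{m/2}U[0,1]|\lesssim (1+|x|)^{-(n-m)}$ has weak $L^q$ norm finite exactly when $q = n/(n-2m)$) gives $\norm{u_k}_{L^q_w} \lesssim \norm{\Delta^{m/2}(u_k - c_kU)}_{L^2} + \lambda_k^{(2m-n)/2} \lesssim \dis(u_k,\mathcal{M}_{n,0,m})\to 0$, again contradicting the lower bound. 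The remaining case, $\lambda_k$ bounded but $|z_k|\to\infty$ in the first coordinate, is impossible: since $u_k$ lives in the slab and $\norm{\Delta^{m/2}u_k}_{L^2}=1$, a bubble centered far (in the bounded direction) from the slab would have almost all its mass outside, so $\dis(u_k,\mathcal{M}_{n,0,m})\to\norm{\Delta^{m/2}U}_{L^2}>0$, contradiction.

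The main obstacle I anticipate is making the case analysis on $(z_k,\lambda_k)$ clean and exhaustive, in particular verifying the uniform lower bound $\norm{\Delta^{m/2}U[z,\lambda]}_{L^2((I\times\R^{n-1})^c)} \ge c(m,n) > 0$ for $\lambda$ in a bounded range and $z$ in the slab — this is where the specific geometry (a slab of \emph{fixed} finite width, rather than finite \emph{measure}) does the work, and it requires a short explicit computation with the explicit profile $U[z,\lambda](x) = (\lambda/(1+\lambda^2|x-z|^2))^{(n-2m)/2}$ showing that a definite fraction of the (suitably $\Delta^{m/2}$-differentiated) $L^2$ mass always escapes any translate of $I\times\R^{n-1}$. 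Once that lemma is in hand, the rest is the same contradiction scheme as in Lemma \ref{cons2}, and the condition $\frac n4\le m<\frac n2$ is exactly what is needed so that $q=\frac{n}{n-2m}\ge 2$ and the weak-$L^q$ norm of $\Delta^{m/2}U[0,1]$ is finite.
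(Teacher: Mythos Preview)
Your overall contradiction scheme is the right one, and your case analysis on $(z_k,\lambda_k)$ correctly unpacks why the paper can assert $z_n\in I\times\R^{n-1}$ and $\lambda_n\to\infty$ once $\dis(u_n,\mathcal{M})\to 0$. But there is a genuine gap in the core estimate. In the step
\[
\norm{u_k}_{L^q_w}\ \lesssim\ \norm{\Delta^{m/2}(u_k-c_kU[z_k,\lambda_k])}_{L^2}+\lambda_k^{(2m-n)/2},
\]
the first term on the right does not control $\norm{u_k-c_kU}_{L^q_w}$: Sobolev only gives $L^{2q}$, and since the slab has infinite measure there is no passage from $L^{2q}$ down to $L^q_w$. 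The same problem appears earlier: you need $\norm{u_k}_{L^q_w}$ bounded before you can conclude that the Sobolev deficit tends to $0$ and invoke Theorem~\ref{thm: stability1}, and you give no reason for that bound. The paper supplies the missing ingredient: for any $v$ supported in a slab of fixed width one has the one-dimensional Poincar\'e bound $\int |v|^2\lesssim\int|\partial_{x_1}^m v|^2\le\norm{v}_{D_m}^2$, and interpolating between $L^2$ and $L^{2q}$ gives $\norm{v}_{L^q_w}\lesssim\norm{v}_{D_m}$ provided $q\ge 2$, i.e.\ $m\ge n/4$. \emph{This} interpolation is the true role of the lower bound on $m$, not finiteness of any weak norm of the bubble.

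This slab bound cannot be applied to $u_k-c_kU$ directly, because the bubble is not supported in the slab. The paper therefore introduces a cut-off $\eta$ in the $x_1$-variable, writes $\norm{u_k}_{L^q_w}\le\norm{u_k-c_k\eta U}_{L^q_w}+\norm{c_k\eta U}_{L^q_w}$, applies the slab bound to the first summand (now supported in $3I\times\R^{n-1}$), and then must check by hand that the cut-off error satisfies $\norm{(1-\eta)U[z_k,\lambda_k]}_{D_m}\lesssim\lambda_k^{(2m-n)/2}$. Finally, note that Theorem~\ref{cons3} concerns $\norm{u}_{L^q_w}$, not $\norm{\Delta^{m/2}u}_{L^q_w}$; several of your estimates conflate the two, and in fact $\Delta^{m/2}U[0,1]$ does \emph{not} lie in $L^q_w(\R^n)$ for $q=\frac{n}{n-2m}$ (its decay $|x|^{-(n-m)}$ is too slow), so the parenthetical justification you give at the end is incorrect.
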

\vskip0.136in	
	\begin{remark}
		When $m=1$, \eqref{ww3} was proved by Wang and Willem in \cite{wang}. They in fact considered the refined Caffarelli-Kohn-Nirenberg inequality in more general domains. Their methods relied on a maximum principle. In our cases, due to the absence of maximum principle for the polyharmonic operator, we restrict our analysis to domains that are bounded in one direction.
	\end{remark}

\vskip0.35in
\noindent{\bf Proof of Theorem \ref{cons3}.} Assume that \eqref{ww3} fails, then there exists a sequence $(u_n)\subset H_0^1(\Omega)$ such that
		\begin{equation*}
			\begin{aligned}
				\frac{d(u_n,\mathcal{M}_{n,0,m})}{\left\Vert u_n\right\Vert_{L^{q}_{w}(I\times \R^{n-1})}}\rightarrow 0.
			\end{aligned}
		\end{equation*}
		Assume $ \Vert u_n\Vert_{D_m}=1$. Note that
		\begin{equation*}
			\int_{I\times\R^{n-1}}|\Delta^\frac{m}{2}u_n|^2\geq \int_{\R^{n-1}}\int_{I}\left|\frac{\partial^m u_n}{\partial x_1^m}\right|^2\gtrsim \int_{\R^{n-1}}\int_{I}|u_n|^2.
		\end{equation*}
		The assumption $\frac{n}{4}\leq m<\frac{n}{2}$, the interpolation inequality and the fractional Sobolev inequality yield
		\begin{equation}\label{ww5}
			\left\Vert u_n\right\Vert_{L^{q}_{w}(I\times \R^{n-1})}\leq \left\Vert u_n\right\Vert_{L^2}^{\lambda}\left\Vert u_n\right\Vert_{L^{2q}}^{1-\lambda}\lesssim \Vert u_n\Vert_{D_m}
		\end{equation}
		for some $0\leq \lambda\leq 1$. Hence $\left\Vert u_n\right\Vert_{L^{q}_{w}(I\times \R^{n-1})}$ is bounded and $d(u,\mathcal{M}_{n,0,m})\rightarrow 0$. This indicates the existence of $c_n\rightarrow 1, z_n\in I\times\R^{n-1}$ and $\lambda_n\rightarrow\infty$ such that
		\begin{equation*}
			\begin{aligned}
				d(u_n,\mathcal{M}_{n,0,m})=\left\Vert \Delta^{\frac{m}{2}}(u_n-c_nU[z_n,\lambda_n])\right\Vert_{L^2}.
			\end{aligned}
		\end{equation*}
		Similarly to the computation in \eqref{ww4}, we derive that $d(u_n,\mathcal{M}_{n,0,m})\gtrsim \lambda_n^{\frac{2m-n}{2}}$. Now fix a cut-off function $\eta\in C_c^{\infty}(\R)$ such that $0\leq\eta\leq 1$, $\eta=1$ in $2I$, $\eta=0$ outside $3I$. Using \eqref{ww5}, we can estimate:
		\begin{equation*}
			\begin{aligned}
				\left\Vert u_n\right\Vert_{L^{q}_{w}(I\times\R^{n-1})}&\leq \left\Vert u_n-c_n\eta U[z_n,\lambda_n]\right\Vert_{L^{q}_{w}(I\times \R^{n-1})}+\left\Vert c_n\eta U[z_n,\lambda_n]\right\Vert_{L^{q}_{w}(I\times \R^{n-1})}\\
				&\lesssim \left\Vert u_n-c_n\eta U[z_n,\lambda_n]\right\Vert_{D_m}+\left\Vert c_nU[z_n,\lambda_n]\right\Vert_{L^{q}_{w}(I\times \R^{n-1})}\\
				&\lesssim \left\Vert u_n-c_nU[z_n,\lambda_n]\right\Vert_{D_m}+\left\Vert c_n(1-\eta) U[z_n,\lambda_n]\right\Vert_{D_m}+\lambda_n^{\frac{2m-n}{2}}\\
				&\lesssim d(u_n,\mathcal{M}_{n,0,m})+\left\Vert (1-\eta)U[z_n,\lambda_n]\right\Vert_{D_m}.
			\end{aligned}
		\end{equation*}
		It will lead to a contradiction if there exists a uniform constant $C$ such that
		\begin{equation}\label{ww6}
			\left\Vert (1-\eta)U[z_n,\lambda_n]\right\Vert_{D_m}\leq C\lambda_n^{\frac{2m-n}{2}},\quad \forall n\in \N.
		\end{equation}
		Based on our choice of $z_n,\lambda_n$ and $\eta$, \eqref{ww6} can be derived from the following three simple observations:
		\begin{align*}
			& \left\Vert (1-\eta)U[z_n,\lambda_n]\right\Vert_{D^m}\lesssim \sum\limits_{i=0}^{m-1}\left\Vert \nabla^{i}U[z_n,\lambda_n]\right\Vert_{L^2((3I-2I)\times\R^{n-1})}+\left\Vert \nabla^m U[z_n,\lambda_n]\right\Vert_{L^2((2I)^c\times\R^{n-1})}, \\
			& \left\Vert |\nabla^{i}U[z_n,\lambda_n]|\right\Vert_{L^2((3I-2I)\times\R^{n-1})}\lesssim \lambda_n^{\frac{2m-n}{2}},\quad 0\leq i\leq m-1,\\
			& \left\Vert \nabla^m U[z_n,\lambda_n]\right\Vert_{L^2((2I)^c\times\R^{n-1})}\lesssim \lambda_n^{\frac{2m-n}{2}}.
		\end{align*}
        The proof is complete.
\hfill$\Box$

\vskip0.36in

	Next we establish dual stability for the Escobar trace inequality:
	\begin{equation*}
		S_{\mathrm{E}}(n)\norm{f}_{L^{\frac{2(n-1)}{n-2}}(\R^{n-1})}^2 \le \norm{f}_{H^1(\R_+^n)}^2.
	\end{equation*}
	The extremal functions are given by \eqref{esco} up to normalizations. Consider the pairing $(X,Y,\scalprod{\cdot}{\cdot} )$:
	\begin{equation*}
		X=H^1(\R_+^n),\;Y=L^{\frac{2(n-1)}{n}}(\R^{n-1}),\;\scalprod{f}{g}=\int_{\R^{n-1}}fg,
	\end{equation*}
	and two functionals $\Phi$ and $\Psi$ on $X$:
	\begin{equation*}
		\Phi(f)=\norm{f}_{H^1(\R_+^n)}^2,\quad \Psi(f)=S_{\mathrm{E}}(n)\norm{f}_{L^{\frac{2(n-1)}{n-2}}(\R^{n-1})}^2.
	\end{equation*}
	It is straightforward to compute the Legendre transform $\Phi^*$ and $\Psi^*$ on $Y$:
	\begin{align*}
		\Phi^*(g)&=\sup_{f\in X}\left\{\scalprod{f}{g}-\Phi(f)\right\}\\
		&=\sup_{f\in X}\left\{\int_{\R^{n-1}}fg-\int_{\R_+^n}|\nabla f|^2\right\}\\
		&=\frac{1}{4}\Vert \nabla \mathcal{P}[g]\Vert_{L^2(\mathbb{R}_{+}^n)}^2\\
		\Psi^*(g)&=\sup_{f\in X}\left\{\scalprod{f}{g}-\Phi(f)\right\}\\
		&=\sup_{f\in X}\left\{\int_{\R^{n-1}}fg-S_{\mathrm{E}}(n)\norm{f}_{L^{\frac{2(n-1)}{n-2}}(\R^{n-1})}^2\right\}\\
		&=\frac{1}{4S_{\mathrm{E}}(n)}\norm{g}_{L^{\frac{2(n-1)}{n}}(\R^{n-1})}^2.
	\end{align*}
	The function $\mathcal{P}[g]$ in the above computations is the unique solution of
	\begin{equation}\label{neumann}
		\begin{cases}
			\Delta \mathcal{P}[g]= 0 & \text{in }\R_+^n, \\
			\frac{\partial \mathcal{P}[g]}{\partial t}=-g & \text{on }\partial\R_+^n.
		\end{cases}
	\end{equation}
	Based on the standard dual scheme developed by Carlen in \cite{carlen1}, we can obtain the following stability result:
	
	\begin{theorem}\label{cons4}
		For any $n\geq 3$ and $u\in L^{\frac{2n-2}{n}}(\mathbb{R}^{n-1})$, the following inequality holds:
		\begin{align}\label{ooo1}
			\Vert u\Vert_{L^{\frac{2n-2}{n}}(\mathbb{R}^{n-1})}^2\geq S_{\mathrm{E}}(n)^{-1}\Vert \nabla \mathcal{P}[u]\Vert_{L^2(\mathbb{R}_{+}^n)}^2,
		\end{align}
		where $\mathcal{P}[u]$ is the unique solution of \eqref{neumann}. The equality holds precisely when $u(x)=(1+|x|^2)^{\frac{-n}{2}}$, after suitable translation, scaling and normalization.
		
		Furthermore, let $\mathcal{M}_{\mathrm{Neu}}$ denote the extremal manifold. The following sharp stability result holds:
		\begin{equation*}
			\Vert u\Vert_{L^{\frac{2n-2}{n}}(\mathbb{R}^{n-1})}^2- S_{\mathrm{E}}(n)^{-1}\Vert \nabla \mathcal{P}[u]\Vert_{L^2(\mathbb{R}_{+}^n)}^2\geq C\inf_{v\in \mathcal{M}_{\mathrm{Neu}}}\Vert u-v\Vert_{L^{\frac{2n-2}{n}}(\mathbb{R}^{n-1})}^2
		\end{equation*}
		for a positive constant $C$ that can be explicitly expressed in terms of $C_{\mathrm{BE}}(n,1,1)$.
	\end{theorem}
    \vskip0.2in
\noindent{\bf Proof of Theorem \ref{cons4}.}
We follow the proof of \cite[Theorem 1.19]{carlen1}. From \cite[Theorem 2.12]{carlen1}, the operator $\Psi^*$ is $\frac{n-2}{n}$-convex. For any $g\in Y$, we can apply \cite[Lemma 3.1]{carlen1} to derive that
\begin{align}\label{mmm1}
    \Psi^*(g)-\Phi^*(g)\geq \Phi(f)-\Psi(f)+\frac{n-2}{4S_{\mathrm{E}}(n)n}\norm*{g-\nabla\Psi(f)}_{L^{\frac{2(n-1)}{n}}(\R^{n-1})}^2,
\end{align}
where $f=\nabla\Phi^*(g)$. For simplicity, we will denote $C_{\mathrm{BE}}(n,1,1)$ as $C_{\mathrm{BE}}$ in what follows. From Theorem \ref{main thm1}, we have
\begin{align}\label{mmm2}
    \Phi(f)-\Psi(f)\geq&\; C_{\mathrm{BE}}\inf_{h\in\mathcal{M}_{n,1,1}}\norm{f-h}_{H^1(\R_+^n)}^2\nonumber\\
    \geq&\;C_{\mathrm{BE}}\, S_{\mathrm{E}}(n)\inf_{h\in\mathcal{M}_{n,1,1}}\norm{f-h}_{L^{\frac{2(n-1)}{n-2}}(\R^{n-1})}^2.
\end{align}
Assume that the infimum in \eqref{mmm2} is attained by $h_0$. Combining \eqref{mmm1} and \eqref{mmm2} yields
\begin{align}\label{mmm3}
    \Psi^*(g)-\Phi^*(g)\geq C_{\mathrm{BE}}\,S_{\mathrm{E}}(n)\norm{f-h_0}_{L^{\frac{2(n-1)}{n-2}}(\R^{n-1})}^2+\frac{n-2}{4S_{\mathrm{E}}(n)n}\norm*{g-\nabla\Psi(f)}_{L^{\frac{2(n-1)}{n}}(\R^{n-1})}^2.
\end{align}
Thanks to \cite[Theorem 2.13]{carlen1}, $\nabla \Psi$ is Lipschitz from $L^{\frac{2(n-1)}{n-2}}(\R^{n-1})$ to $L^{\frac{2(n-1)}{n}}(\R^{n-1})$ with Lipschitz constant $\frac{n}{n-2}$. Therefore, we have
\begin{align}\label{mmm4}
    \norm{f-h_0}_{L^{\frac{2(n-1)}{n-2}}(\R^{n-1})}\geq\frac{n-2}{n} \norm{\nabla\Psi(f)-\nabla\Psi(h_0)}_{L^{\frac{2(n-1)}{n}}(\R^{n-1})}.
\end{align}
Note that $\nabla\Psi(h_0)\in\mathcal{M}_{\mathrm{Neu}}$.
From \eqref{mmm3} and \eqref{mmm4}, we obtain
\begin{align}
    \Psi^*(g)-\Phi^*(g)\geq&\; C\norm{\nabla\Psi(f)-\nabla\Psi(h_0)}_{L^{\frac{2(n-1)}{n}}(\R^{n-1})}^2+C\norm*{g-\nabla\Psi(f)}_{L^{\frac{2(n-1)}{n}}(\R^{n-1})}^2\nonumber\\
    \geq&\;C\norm{\nabla\Psi(h_0)-g}_{L^{\frac{2(n-1)}{n}}(\R^{n-1})}^2\nonumber\\
    \geq &\;C\inf_{h\in\mathcal{M}_{\mathrm{Neu}}}\norm{h-g}_{L^{\frac{2(n-1)}{n}}(\R^{n-1})}^2.\nonumber
\end{align}
Here, $C$ is a positive constant that can be explicitly expressed in terms of $C_{\mathrm{BE}}$.
\hfill$\Box$

\vskip0.36in

	\begin{remark}
		Through further calculations, it can be shown that the a priori estimate \eqref{ooo1} is equivalent to the Hardy-Littlewood-Sobolev inequality in $\R^{n-1}$, a fact that is implicitly contained in \cite{beck}. Specifically, we express $\mathcal{P}[u]$ explicitly in terms of $u$ using the Green's representation formula:
        \begin{align}
            \mathcal{P}[u](x',t)=C_n\int_{\R^{n-1}}\frac{u(y')}{\left(|x'-y'|^2+t^2\right)^\frac{n-2}{2}}dy',\nonumber
        \end{align}
        where $x'\in\R^{n-1},t\in\R$, and $C_n$ is a dimensional constant. Substituting this representation into \eqref{ooo1}, we obtain:
        \begin{align}
           \Vert u\Vert_{L^{\frac{2n-2}{n}}(\mathbb{R}^{n-1})}^2\geq&\; S_{\mathrm{E}}(n)^{-1}\Vert \nabla \mathcal{P}[u]\Vert_{L^2(\mathbb{R}_{+}^n)}^2=S_{\mathrm{E}}(n)^{-1}\int_{\R^{n-1}}\mathcal{P}[u]u\nonumber\\ 
           =&\;S_{\mathrm{E}}(n)^{-1}C_n\int_{\R^{n-1}\times\R^{n-1}}\frac{u(x')u(y')}{|x'-y'|^{n-2}}dx'dy'.\nonumber
        \end{align}
        Moreover, the extremal functions of \eqref{ooo1} coincide with the extremal functions of the Hardy-Littlewood-Sobolev inequality in $\R^{n-1}$. Consequently, $S_{\mathrm{E}}(n)^{-1} C_n$ must equal the sharp constant of the Hardy-Littlewood-Sobolev inequality, thereby establishing the equivalence.
        
        We present Theorem \ref{cons4} here for two primary reasons. First, we believe that this estimate is of independent interest. Second, the operator $\mathcal{P}$ and the inequality \eqref{ooo1} will play a crucial role in Section \ref{sec7}, where they are applied to address the dual Sobolev norm.
	\end{remark}
	
	\section{Qualitative profile decomposition of the   Neumann   problem}\label{sec5}
	
	In this section we establish qualitative profile decomposition for the following nonlinear critical Neumann boundary problem in the same spirit of pioneer works of Struwe in \cite{struwe1984}:
	\begin{equation}\label{eqt}
		\begin{cases}
			\Delta u= 0 & \text{in }\R_+^n, \\
			\frac{\partial u}{\partial t}=-|u|^{p-1}u & \text{on }\partial\R_+^n.
		\end{cases}
	\end{equation}
	Although we deal with the trace version here, our main methods are parallel to those in \cite{Struwe2}, where \eqref{eleqt} is treated. We first consider the global compactness for general functions.

\vskip0.3in

	\begin{theorem}\label{ql}
		Let $n\geq 3$ and $(u_k)_{k\in \mathbb{N}}\subset H^1(\mathbb{R}_{+}^n)$ be a sequence of functions with $\int_{\mathbb{R}_{+}^{n}}|\nabla u_k|^2$ uniformly bounded. Assume that (recall the dual norm is defined in \eqref{dualnorm})
		\begin{equation}\label{wc}
			\left\Vert \Delta u_k +|u_k|^{p-1}u_k\right\Vert_{H^{-1}}\rightarrow 0\quad \text{as}\; k\rightarrow \infty.
		\end{equation}
		Then there exist a number $\nu\in\mathbb{N}$, a subsequence of $(u_k)_k$, which we still denote by $(u_k)_k$, a sequence $(z_1^{(k)},...,z_{\nu}^{(k)})$ of $\nu$-tuples of points in $\mathbb{R}^{n-1}$, a sequence $(\lambda_1^{(k)},...,\lambda_{\nu}^{k})$ of $\nu$-tuples of positive real numbers and a sequence of functions $(w_i)_{1\leq i\leq \nu}$, which are nontrivial solutions of \eqref{eqt}, such that
		\begin{equation}\label{rr1}
			\left\Vert \nabla\left(u_k-\sum\limits_{i=1}^{\nu}w_i[z_i^{(k)},\lambda_i^{(k)}]\right)\right\Vert_{L^2}\rightarrow 0\quad \text{as}\; k\rightarrow \infty,
		\end{equation}
		\begin{equation}\label{rr2}
			\left\Vert\nabla u_k\right\Vert_{L^2}^2-\sum\limits_{i=1}^{\nu}\left\Vert\nabla w_i[z_i^{(k)},\lambda_i^{(k)}]\right\Vert_{L^2}^2\rightarrow 0\quad \text{as}\; k\rightarrow \infty.
		\end{equation}
		Here $w_i[z,\lambda](x,t):=\lambda^{\frac{n-2}{2}}w_i(\lambda(x-z),\lambda t)$ for every $z\in\mathbb{R}^{n-1}, \lambda>0$.
	\end{theorem}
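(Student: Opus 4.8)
The plan is to run the classical induction-on-bubbles scheme of Struwe \cite{struwe1984}, in the form used for the bulk equation in \cite{Struwe2}, replacing the interior nonlinearity by the boundary one and using the Escobar trace inequality (Theorem \ref{thm: fractional} with $m=\alpha=1$) where the Sobolev inequality would be used. Two features are specific to the trace setting: the nonlinear term lives on $\partial\R^n_+=\R^{n-1}$, so all compactness is extracted from the trace map $\tau\colon H^1(\R^n_+)\to H^{1/2}(\R^{n-1})\hookrightarrow\hookrightarrow L^s_{\mathrm{loc}}(\R^{n-1})$ for $s<2^\dagger$; and the quantization threshold is $S_{\mathrm E}(n)^{n-1}$, the gradient energy of an Escobar bubble. \emph{Step 1 (the weak limit solves the equation).} Since $\norm{\nabla u_k}_{L^2}$ is bounded, pass to a subsequence with $u_k\rightharpoonup u_0$ in $H^1(\R^n_+)$ and $\tau u_k\to\tau u_0$ in $L^s_{\mathrm{loc}}(\R^{n-1})$, $s<2^\dagger$. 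For $\varphi\in C_c^\infty(\R^n)|_{\R^n_+}$ one has $\int\nabla u_k\cdot\nabla\varphi\to\int\nabla u_0\cdot\nabla\varphi$, while $|u_k|^{p-1}u_k$ is bounded in $L^{(2^\dagger)'}(\R^{n-1})$ and converges a.e.\ on $\operatorname{supp}\varphi$, so $\int_{\R^{n-1}}|u_k|^{p-1}u_k\,\varphi\to\int_{\R^{n-1}}|u_0|^{p-1}u_0\,\varphi$; combined with \eqref{wc} and density, $u_0$ is a weak solution of \eqref{eqt}.

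\emph{Step 2 (Brezis--Lieb splitting).} Put $v_k^1:=u_k-u_0\rightharpoonup 0$. The Brezis--Lieb lemma gives $\norm{\nabla v_k^1}_{L^2}^2=\norm{\nabla u_k}_{L^2}^2-\norm{\nabla u_0}_{L^2}^2+o(1)$ together with $\norm{|v_k^1|^{p-1}v_k^1-|u_k|^{p-1}u_k+|u_0|^{p-1}u_0}_{L^{(2^\dagger)'}(\R^{n-1})}\to 0$; since $u_0$ solves \eqref{eqt} exactly, this forces $\norm{\Delta v_k^1+|v_k^1|^{p-1}v_k^1}_{H^{-1}}\to 0$, i.e.\ $v_k^1$ is again an approximate solution. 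If $\norm{\nabla v_k^1}_{L^2}\to 0$ the process stops. Otherwise testing the approximate equation for $v_k^1$ against itself gives $\norm{\nabla v_k^1}_{L^2}^2=\norm{\tau v_k^1}_{L^{2^\dagger}(\R^{n-1})}^{2^\dagger}+o(1)$, so both stay bounded below, and the Escobar inequality yields $\liminf_k\norm{\nabla v_k^1}_{L^2}^2\ge S_{\mathrm E}(n)^{n-1}$. \emph{Step 3 (rescaling off a bubble).} Since $\norm{\nabla\cdot}_{L^2(\R^n_+)}$ and $\norm{\tau\cdot}_{L^{2^\dagger}(\R^{n-1})}$ are invariant under $f\mapsto\lambda^{(n-2)/2}f(\lambda(\cdot-z),\lambda\,\cdot)$, a L\'evy concentration-function argument applied to the bounded family of measures $|\tau v_k^1|^{2^\dagger}\,d\mathcal H^{n-1}$ (whose masses are bounded away from $0$) produces $z_k\in\R^{n-1}$, $\lambda_k>0$ so that the rescaled functions $\tilde v_k(x,t):=\lambda_k^{-(n-2)/2}v_k^1(z_k+\lambda_k^{-1}x,\lambda_k^{-1}t)$ satisfy $\int_{B^{n-1}(0,1)}|\tilde v_k|^{2^\dagger}=\sup_{z}\int_{B^{n-1}(z,1)}|\tilde v_k|^{2^\dagger}=\varepsilon_0$ for a small fixed $\varepsilon_0$. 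The sup-normalization excludes vanishing (Lions) and the smallness of $\varepsilon_0$ excludes concentration at a point, so along a subsequence $\tilde v_k\rightharpoonup w_1\not\equiv 0$, which solves \eqref{eqt} by the argument of Step 1; undoing the rescaling, $w_1[z_k,\lambda_k]$ is the first profile.

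\emph{Step 4 (iteration and termination).} Set $v_k^2:=v_k^1-w_1[z_k,\lambda_k]$; after rescaling $\tilde v_k-w_1\rightharpoonup 0$, so Brezis--Lieb in the rescaled coordinates (then undone by scale invariance) gives $\norm{\nabla v_k^2}_{L^2}^2=\norm{\nabla v_k^1}_{L^2}^2-\norm{\nabla w_1}_{L^2}^2+o(1)$, and $v_k^2$ is still an approximate solution. Repeating, each extracted $w_i$ costs at least $S_{\mathrm E}(n)^{n-1}$ of gradient energy while $\norm{\nabla u_k}_{L^2}^2$ is bounded, so the scheme terminates after finitely many steps $\nu$, with a remainder obeying $\norm{\nabla v_k^{\nu+1}}_{L^2}\to 0$; this is \eqref{rr1}. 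Iterating the energy identities yields $\norm{\nabla u_k}_{L^2}^2=\norm{\nabla u_0}_{L^2}^2+\sum_{i}\norm{\nabla w_i}_{L^2}^2+o(1)$, and since $\norm{\nabla w_i[z_i^{(k)},\lambda_i^{(k)}]}_{L^2}=\norm{\nabla w_i}_{L^2}$, \eqref{rr2} follows once the cross terms $\int_{\R^n_+}\nabla w_i[z_i^{(k)},\lambda_i^{(k)}]\cdot\nabla w_j[z_j^{(k)},\lambda_j^{(k)}]\to 0$ for $i\ne j$ are shown, which is the usual consequence of the asymptotic separation $\mu_{ij}\to 0$ of the scaling parameters produced by the construction.

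The main obstacle is Step 3: selecting $(z_k,\lambda_k)$ so that the rescaled sequence has a nontrivial weak limit that still solves \eqref{eqt}. This needs the boundary nonlinearity to pass to the limit in the moving/rescaled frame, i.e.\ uniform local trace compactness of $H^1$ of the rescaled half-space, and it needs the concentration function set up so that neither vanishing nor further point concentration occurs. Tightly linked to this is the bookkeeping in Step 4, namely that subtracting $w_i[z_i^{(k)},\lambda_i^{(k)}]$ preserves the approximate-solution property and that previously extracted profiles do not reappear; this rests on the asymptotic orthogonality of the scales and is handled exactly as in the bulk case \cite{Struwe2}. Everything else (the weak-limit step, the Brezis--Lieb identities, the energy quantization from the Escobar inequality, and termination) is routine.
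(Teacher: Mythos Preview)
Your approach is correct and follows the classical Struwe scheme, but it differs from the paper's proof in two respects worth noting. First, you begin by extracting the weak limit $u_0$ at the original scale (Steps~1--2) before any rescaling, whereas the paper goes directly to the rescaling step and absorbs any nontrivial weak limit into the first profile. Second, and more substantively, you base the rescaling in Step~3 on the \emph{boundary} concentration function $z\mapsto\int_{B^{n-1}(z,1)}|\tau v_k^1|^{2^\dagger}$, while the paper uses the \emph{gradient-energy} concentration function $\int_{B^+(z,t)}|\nabla v_k|^2$ taken over half-balls centered at points $(z,t)\in\overline{\R^n_+}$. The paper's choice forces an extra step: one must rule out that the energy concentrates deep in the interior (the maximizing center has $t_k\to\infty$), which the paper handles by showing the weak limit would then be a global harmonic function in $H^1(\R^n)$, hence zero, contradicting the $\epsilon$-mass normalization. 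Your boundary-based concentration function sidesteps this issue entirely since only boundary mass is tracked; in exchange, the non-triviality of $w_1$ (which you correctly flag as the main obstacle) must come from the local-compactness argument of testing the approximate equation against $(\tilde v_k-w_1)\eta^2$ and absorbing the boundary term via H\"older, the smallness of $\varepsilon_0$, and the Escobar inequality. This is precisely the analogue of the paper's cutoff computation and works the same way.

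One small correction in Step~4: \eqref{rr2} follows directly from the iterated Brezis--Lieb energy identity together with scale invariance of $\norm{\nabla\cdot}_{L^2}$; you do not need the vanishing of the cross terms $\int_{\R^n_+}\nabla w_i[z_i^{(k)},\lambda_i^{(k)}]\cdot\nabla w_j[z_j^{(k)},\lambda_j^{(k)}]$ for $i\neq j$. That asymptotic orthogonality is a consequence of \eqref{rr1} and \eqref{rr2} (expand the square in \eqref{rr1}), not an ingredient in their proof.
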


\vskip0.3in
\noindent{\bf Proof of Theorem \ref{ql}.} In the following proof, we will use $B_r(x,t)$ to denote the ball centered at $(x,t)$ with radius $r$ and omit the subscript $r$ when $r=1$. We also define $B_r^+(x,t)=B_r(x,t) \cap \R_+^n$, $\partial_bB_r^+(x,t)=\partial B_r^+(x,t)\cap\R^{n-1}$.
		
		Since $\int_{\mathbb{R}_{+}^{n}}|\nabla u_k|^2$ is uniformly bounded, we may assume $\int_{\mathbb{R}_{+}^{n}}|\nabla u_k|^2\rightarrow L\in [0,\infty)$. If $L=0$, then $u_k\rightarrow 0$ and the results hold clearly. In the following we assume $L>0$ and $\frac{L}{2}\leq\int_{\mathbb{R}_{+}^{n}}|\nabla u_k|^2\leq\frac{3L}{2}$ for any $k$. Due to translational and dilational invariances, we can take a sequence $(z_k)_{k}$ of points in $\mathbb{R}^{n-1}$, a sequence $(\lambda_k)_{k}$ of positive real numbers and a sequence $(t_k)_{k}$ of nonnegative numbers such that
		\begin{equation}\label{wc2}
			\int_{B^+(0,t_k)}\left|\nabla u_k\left[z_k,\lambda_k\right]\right|^2=\sup_{(z,t)\in\mathbb{R}^{n}_{+}}\int_{B^+(z,t)}\left|\nabla u_k\left[z_k,\lambda_k\right]\right|^2=\epsilon
		\end{equation}
		for any $k$ and some $\epsilon$ sufficiently small that will be determined later.
		
		If $t_k\rightarrow\infty$ as $k\to \infty$, we may assume $t_k\nearrow \infty$ and set $v_k(x,t)=u_k\left[z_k,\lambda_k\right](x,t+t_k)$ and $\Omega_k=\mathbb{R}^{n-1}\times (-t_k,\infty)$. Then $v_k\subset H^1(\Omega_k)$ and $\Omega_k\rightarrow\mathbb{R}^n$. Since $u_k\left[z_k,\lambda_k\right]$ is uniformly bounded, there exist a subsequence and a function $v\in H^1(\mathbb{R}^n)$ such that
		\begin{equation*}
			v_k\rightharpoonup v\quad \text{in}\; H^1(\Omega_m)\quad \text{for any fixed }m,
		\end{equation*}
		and
		\begin{equation*}
			v_k\rightarrow v\quad \text{in}\; L^2_{\mathrm{loc}}(\mathbb{R}^n).
		\end{equation*}
		From \eqref{wc} we also know that
		\begin{equation}\label{wc1}
			\left\Vert \Delta v_k +|v_k|^{p-1}v_k\right\Vert_{H^{-1}(\Omega_k)}\rightarrow 0\quad \text{as}\; k\rightarrow \infty.
		\end{equation}
		Hence $v$ is a global harmonic function in $\mathbb{R}^n$. But $v\in H^1(\mathbb{R}^n)$ and so $v$ must be a zero function. Now take a cut-off function $\eta\in C^{\infty}_c(\mathbb{R}^n)$ such that $0\leq\eta\leq 1$, $\eta=1$ in $B(0,0)$ and $\eta=0$ outside $B_2(0,0)$. Using \eqref{wc2} and \eqref{wc1} and H\"{o}lder inequality we can obtain
		\begin{equation*}
			\begin{aligned}
				o(1)&=\left\langle v_k\eta, \Delta v_k +|v_k|^{p-1}v_k\right\rangle\\
				&=\int_{B_2(0,0)} \nabla(v_k\eta)\cdot \nabla v_k\\
				&=\int_{B_2(0,0)}\eta |\nabla v_k|^2+\int_{B_2(0,0)}v_k\nabla\eta\cdot \nabla v_k\\
				&\geq \epsilon+o(1),
			\end{aligned}
		\end{equation*}
		where $o(1)\rightarrow 0$ as $k\rightarrow\infty$. The computation above yields a contradiction!
		
		So $t_k$ must be uniformly bounded. Without loss of generality, we assume $t_k\rightarrow \Bar{t}$ for some $\Bar{t}\in [0,\infty)$ and $|t_k-\Bar{t}|\leq \frac{1}{8}$. Choose cut-off functions $\eta_i\in C^{\infty}_c(\mathbb{R}^n)$ such that $0\leq\eta_i\leq 1$, $i=1,2$, $\eta_1=1$ in $B_2(0,\Bar{t})$, $\eta_1=0$ outside $B_3(0,\Bar{t})$, while $\eta_2=1$ in $B_3(0,\Bar{t})$, $\eta_2=0$ outside $B_4(0,\Bar{t})$. Set $v_k(x,t)=u_k\left[z_k,\lambda_k\right](x,t)$. There exists a function $v\in H^1(\mathbb{R}^{n}_{+})$ such that
		\begin{equation*}
			v_k\rightharpoonup v\quad \text{in}\; H^1(\mathbb{R}^{n}_{+}),
		\end{equation*}
		\begin{equation*}
			v_k\rightarrow v\quad \text{in}\; L^2_{\mathrm{loc}}(\mathbb{R}^n_{+}),
		\end{equation*}
		and
		\begin{equation*}
			v_k\rightarrow v\quad \text{in}\; L^q_{\mathrm{loc}}(\partial\mathbb{R}^{n}_+)\; \text{for any }1\leq q<p+1.
		\end{equation*}
		From \eqref{wc} we know that
		\begin{equation}\label{wc3}
			\left\Vert \Delta v_k +|v_k|^{p-1}v_k\right\Vert_{H^{-1}}\rightarrow 0\quad \text{as}\; k\rightarrow \infty.
		\end{equation}
		Using $(v_k-v)\eta_1^2$ as a test function, we then obtain
		\begin{align}
			o(1)&=\left\langle (v_k-v)\eta_1^2, \Delta v_k +|v_k|^{p-1}v_k\right\rangle \label{aabb} \\
			&=\int_{B^{+}_3(0,\Bar{t})} \nabla\left((v_k-v)\eta_1^2\right)\cdot \nabla v_k-\int_{\partial_b B^{+}_3(0,\Bar{t})}v_k|v_k|^{p-1}(v_k-v)\eta_1^2 \notag \\
			&=\int_{B^{+}_3(0,\Bar{t})}\left|\nabla ((v_k-v)\eta_1)\right|^2+o(1)-\int_{\partial_b B^{+}_3(0,\Bar{t})}|v_k-v|^{p+1}\eta_1^2+o(1) \notag \\
			&=\int_{B^{+}_3(0,\Bar{t})}\left|\nabla ((v_k-v)\eta_1)\right|^2-\int_{\partial_b B^{+}_3(0,\Bar{t})}((v_k-v)\eta_1)^2|(v_k-v)\eta_2|^{p-1}+o(1) \notag \\
			&\geq \int_{B^{+}_3(0,\Bar{t})}\left|\nabla ((v_k-v)\eta_1)\right|^2+o(1) \notag \\
			&\quad -\left(\int_{\partial_b B^{+}_3(0,\Bar{t})}|(v_k-v)\eta_1|^{p+1}\right)^{\frac{2}{p+1}}\left(\int_{\partial_b B^{+}_4(0,\Bar{t})}|(v_k-v)\eta_2|^{p+1}\right)^{\frac{p-1}{p+1}} \notag \\
			&\geq \int_{B^{+}_3(0,\Bar{t})}\left|\nabla ((v_k-v)\eta_1)\right|^2\left(1-S_{\mathrm{E}}(n)^{-\frac{p+1}{2}}\left(\int_{B^{+}_4(0,\Bar{t})}\left|\nabla ((v_k-v)\eta_2)\right|^2\right)^{p-1}\right)+o(1),\notag
		\end{align}
		where $o(1)\rightarrow 0$ as $k\rightarrow\infty$. Note that
		\begin{equation}\label{aabb1}
			\begin{aligned}
				\int_{B^{+}_4(0,\Bar{t})}\left|\nabla ((v_k-v)\eta_2)\right|^2&=\int_{B^{+}_4(0,\Bar{t})}\left|\nabla (v_k-v)\right|^2\eta_2^2+o(1)\\
				&=\int_{B^{+}_4(0,\Bar{t})}(|\nabla v_k|^2-|\nabla v|^2)\eta_2^2+o(1)\\
				&\leq \int_{B^{+}_4(0,\Bar{t})}|\nabla v_k|^2+o(1)\\
				&\leq C_0\epsilon+o(1).
			\end{aligned}
		\end{equation}
		Here $C_0$ is a number such that $B_4(0,0)$ can be covered by $C_0$ half unit balls. If we choose $\epsilon<\min\left\{\frac{L}{2},\frac{S_{\mathrm{E}}(n)^{\frac{p+1}{2(p-1)}}}{C_0}\right\}$, then \eqref{aabb} and \eqref{aabb1} indicate that
		\begin{equation*}
			\int_{B^{+}_3(0,\Bar{t})}\left|\nabla ((v_k-v)\eta_1)\right|^2\rightarrow 0\quad as\; k\rightarrow\infty.
		\end{equation*}
		Hence $v_k\rightarrow v$ strongly in $H^1(B_2^+(0,\Bar{t}))$. Since
		\begin{equation*}
			\int_{B^{+}_2(0,\Bar{t})}|\nabla v_k|^2\geq \int_{B^{+}(0,t_k)}|\nabla v_k|^2=\epsilon,
		\end{equation*}
		we know that $v\not\equiv 0$.
		
		To conclude the proof, first note that from \eqref{wc3} and the weak convergence, $v$ is indeed a nontrivial solution of \eqref{eqt}. Moreover we have
		\begin{equation}\label{re1}
			\left\Vert \nabla v_k\right\Vert_{L^2}^2=\left\Vert \nabla v\right\Vert_{L^2}^2+\left\Vert \nabla(v_k-v)\right\Vert_{L^2}^2+o(1)
		\end{equation}
		and
		\begin{equation}\label{re2}
			\begin{aligned}
				&\left\langle \phi, \Delta (v_k-v) +|v_k-v|^{p-1}(v_k-v)\right\rangle \\
				={}& \left\langle \phi, \Delta v_k +|v_k|^{p-1}v_k\right\rangle-\left\langle \phi, \Delta v +|v|^{p-1}v\right\rangle+o(1)\\
				={}& o(1)+0+o(1)=o(1)
			\end{aligned}
		\end{equation}
		for any $\phi\in H^1(\mathbb{R}^{n}_+)$ with $\Vert\nabla\phi\Vert_{L^2}\leq 1$. Here $o(1)$ is independent of the choice of $\phi$ and $o(1)\rightarrow 0$ as $k\rightarrow\infty$. Now we can choose a sequence $(z_1^{(k)})_{k}$ of points in $\mathbb{R}^{n-1}$ and a sequence $(\lambda_1^{(k)})_k$ of positive numbers such that $u_k-v[z_1^{(k)},\lambda_1^{(k)}]$ is a dilation of $v_k-v$. Define $w_1=v$, then the relations \eqref{re1} and \eqref{re2} reduce to
		\begin{equation*}
			\left\Vert \nabla u_k\right\Vert_{L^2}^2=\left\Vert \nabla w_1\right\Vert_{L^2}^2+\left\Vert \nabla\left(u_k-w_1\left[z_1^{(k)},\lambda_1^{(k)}\right]\right)\right\Vert_{L^2}^2+o(1)
		\end{equation*}
		and
		\begin{equation*}
			\left\Vert \Delta \left(u_k-w_1\left[z_1^{(k)},\lambda_1^{(k)}\right]\right) +\left|u_k-w_1\left[z_1^{(k)},\lambda_1^{(k)}\right]\right|^{p-1}\left(u_k-w_1\left[z_1^{(k)},\lambda_1^{(k)}\right]\right)\right\Vert_{H^{-1}}\rightarrow 0
		\end{equation*}
		as $k\rightarrow 0$. If $\left\Vert \nabla\left(u_k-w_1\left[z_1^{(k)},\lambda_1^{(k)}\right]\right)\right\Vert_{L^2}\rightarrow 0$, then the proof is end.

		If not, viewing $u_k-w_1\left[z_1^{(k)},\lambda_1^{(k)}\right]$ as new $u_k$ and using the above arguments repeatedly, we may find a sequence of nontrivial solutions $(w_i)_i$, some suitable dilations $(\lambda_i^{(k)})_{k,i}$ and translations $(z_i^{(k)})_{k,i}$ such that
		\begin{equation*}
			\left\Vert\nabla u_k\right\Vert_{L^2}^2=\sum\limits_{i=1}\left\Vert\nabla w_i[z_i^{(k)},\lambda_i^{(k)}]\right\Vert_{L^2}^2+\left\Vert \nabla\left(u_k-\sum\limits_{i=1}w_i\left[z_1^{(k)},\lambda_1^{(k)}\right]\right)\right\Vert_{L^2}^2+o(1).
		\end{equation*}
		Note that, for any nontrivial solution $v$ of \eqref{eqt}, by the Sobolev inequality we have
		\begin{equation*}
			\Vert\nabla v\Vert_{L^2}^2=\Vert v\Vert_{L^{2^\dagger}}^{2^\dagger}\leq S_{\mathrm{E}}(n)^{-\frac{2^\dagger}{2}}\Vert\nabla v\Vert_{L^2}^{2^\dagger}.
		\end{equation*}
		Hence we have the energy estimate
		\begin{equation*}
			\Vert\nabla v\Vert_{L^2}\geq S_{\mathrm{E}}(n)^{\frac{2^\dagger}{2(p-1)}}.
		\end{equation*}
		Since $u_k$ are uniformly bounded in $H^1$, there must exists a finite number $\nu$ such that
		$$\left\Vert \nabla\left(u_k-\sum\limits_{i=1}^{\nu}w_i\left[z_i^{(k)},\lambda_i^{(k)}\right]\right)\right\Vert_{L^2}=o(1),$$
		and the proof is completed.   \hfill$\Box$

\vskip0.36in

	To transition from the general case to the nonnegative case, we need the following Br\'ezis-Lieb type lemma, which has already been used by Mercuri and Willem in \cite{carlo} to obtain similar results for the p-Laplacian.
	\begin{lemma}\label{re3}
		Let $n\geq 3$ and $(u_k)_{k\in\mathbb{N}}\subset H^1(\mathbb{R}_{+}^n)$, assume that
		\begin{enumerate}
			\item[$(a)$] $\sup\limits_{k\in\N}\left\Vert\nabla u_k\right\Vert_{L^2}\leq \infty$,
			\item[$(b)$] $u_k\rightharpoonup u$ in $H^1(\mathbb{R}_{+}^n)$,
			\item[$(c)$] $\norm{(u_k)_{-}}_{L^{2^\dagger}(\mathbb{R}^{n-1})}\rightarrow 0$ as $k\rightarrow \infty$.
		\end{enumerate}
		Then $u\geq 0$ in $\R^{n-1}$ and
		\begin{equation*}
			\left\Vert (u_k-u)_{-}\right\Vert_{L^{2^\dagger}(\mathbb{R}^{n-1})}\rightarrow 0\quad as\; k\rightarrow \infty.
		\end{equation*}
	\end{lemma}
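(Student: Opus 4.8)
The plan is to reduce to a pointwise statement on the boundary $\R^{n-1}$ and then apply the classical Br\'ezis–Lieb lemma for the $L^{2^\dagger}$-norm. First I would invoke the trace embedding: since $(u_k)$ is bounded in $H^1(\R_+^n)$, the traces $(\tau u_k)$ are bounded in $L^{2^\dagger}(\R^{n-1})$, and by the compactness of the trace operator on bounded domains together with the weak convergence $u_k\rightharpoonup u$ in $H^1(\R_+^n)$, one has $\tau u_k\to \tau u$ in $L^q_{\mathrm{loc}}(\R^{n-1})$ for every $1\le q<2^\dagger$, and in particular $\tau u_k\to \tau u$ a.e.\ on $\R^{n-1}$ along a subsequence. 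The nonnegativity $u\ge 0$ on $\R^{n-1}$ is then immediate: from $\tau u_k\to \tau u$ a.e., the negative parts converge a.e., so $(\tau u)_-=\lim_k (\tau u_k)_-$ a.e., while hypothesis $(c)$ forces $\norm{(\tau u_k)_-}_{L^{2^\dagger}}\to 0$, whence $(\tau u_k)_-\to 0$ in measure and (along a further subsequence) a.e., giving $(\tau u)_-=0$ a.e.

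The core of the argument is the Br\'ezis–Lieb decomposition applied to $t\mapsto |t_-|^{2^\dagger}$, or more directly to the function $t\mapsto (t_-)^{2^\dagger}$ on $\R$. Writing $f_k=\tau u_k$ and $f=\tau u$, I would apply the Br\'ezis–Lieb lemma (the general version for a continuous function $j$ with $j(0)=0$ satisfying the growth/continuity hypotheses, here $j(t)=(t_-)^{2^\dagger}$) to the sequence $(f_k)$ which is bounded in $L^{2^\dagger}(\R^{n-1})$ and converges a.e.\ to $f$. This yields
\begin{equation*}
	\int_{\R^{n-1}} (f_k)_-^{2^\dagger} - \int_{\R^{n-1}} (f_k-f)_-^{2^\dagger} - \int_{\R^{n-1}} f_-^{2^\dagger} \to 0 \quad\text{as } k\to\infty.
\end{equation*}
Since we have just shown $f_-\equiv 0$, the last term vanishes, and hypothesis $(c)$ says the first term tends to $0$; hence $\int_{\R^{n-1}} (f_k-f)_-^{2^\dagger}\to 0$, which is exactly $\norm{(u_k-u)_-}_{L^{2^\dagger}(\R^{n-1})}\to 0$. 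To pass from the subsequence back to the full sequence, I would use the standard subsequence-of-subsequence argument: every subsequence of $(u_k)$ has a further subsequence along which the above reasoning applies and gives the limit $0$, so the whole sequence converges to $0$.

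The main obstacle — really the only delicate point — is justifying the a.e.\ convergence $\tau u_k\to \tau u$ on all of $\R^{n-1}$ (not just on bounded sets), and checking that $j(t)=(t_-)^{2^\dagger}$ meets the hypotheses of the Br\'ezis–Lieb lemma. The former is handled by exhausting $\R^{n-1}$ by balls, extracting a diagonal subsequence so that $\tau u_k\to \tau u$ a.e.\ on each ball $B_R$ and hence a.e.\ on $\R^{n-1}$; compactness of the trace on each $B_R^+$ (or rather on $H^1$ of the truncated half-cylinder) gives $L^q_{\mathrm{loc}}$ convergence for $q<2^\dagger$, from which a.e.\ convergence follows. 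For the latter, $j(t)=(t_-)^{2^\dagger}$ is continuous with $j(0)=0$, and the elementary inequality $|j(a+b)-j(a)|\le \varepsilon\, j(a)+C_\varepsilon\, j(b)$ for $a,b\in\R$ follows from the corresponding well-known inequality for $|\cdot|^{2^\dagger}$ restricted to negative parts (using $(a+b)_- \le a_- + b_-$). Once these two routine points are in place, the decomposition and the conclusion follow immediately.
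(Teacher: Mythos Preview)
Your overall strategy via the general Br\'ezis--Lieb lemma is sound, but the inequality you claim for $j(t)=(t_-)^{2^\dagger}$ is false as written: with $a=-1$, $b=1$ one has $|j(a+b)-j(a)|=1$ while $\varepsilon\,j(a)+C_\varepsilon\,j(b)=\varepsilon$. The point is that $j(b)$ vanishes for $b>0$ even though a positive $b$ can move $a+b$ across zero, so subadditivity of the negative part is not enough. The repair is standard: replace $j(a),j(b)$ on the right-hand side by $|a|^{2^\dagger},|b|^{2^\dagger}$. One checks case by case on the signs of $a$ and $a+b$ that $|((a+b)_-)^{2^\dagger}-(a_-)^{2^\dagger}|\le \varepsilon\,|a|^{2^\dagger}+C_\varepsilon\,|b|^{2^\dagger}$, and then the hypotheses of the general Br\'ezis--Lieb lemma are met (boundedness of $(f_k)$ in $L^{2^\dagger}$ gives $\sup_k\int|f_k-f|^{2^\dagger}<\infty$); your argument then goes through.

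The paper's proof takes a different and more elementary route that avoids Br\'ezis--Lieb altogether. Since $\|(u_k)_-\|_{L^{2^\dagger}}\to 0$, a subsequence of $((u_k)_-)$ is dominated by a fixed $g\in L^{2^\dagger}$ (the standard partial converse to dominated convergence). Then the pointwise bound $(u_k-u)_-\le (u_k)_-+u_+\le g+u_+\in L^{2^\dagger}$, together with $(u_k-u)_-\to 0$ a.e., lets one conclude directly by the dominated convergence theorem. This is shorter and exploits the specific feature that the negative parts themselves tend to $0$ in norm; your route uses heavier machinery but, once corrected, would apply equally well to more general nonlinear transformations of the trace.
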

	
	\vskip0.3in
\noindent{\bf Proof of Lemma \ref{re3}.}
		By the Sobolev compact embedding inequalities, we know that $u_k\rightarrow u$ a.e. in $\mathbb{R}^{n-1}$. After passing to a subsequence, we may assume that there exists a function $g\in L^{2^\dagger}$ such that
		\begin{equation*}
			|(u_k)_{-}|\leq g
		\end{equation*}
		for any $k$. Then we have
		\begin{equation*}
			|(u_k-u)_{-}|\leq u^+ +g.
		\end{equation*}
		By Lebesgue dominated convergence theorem, we immediately get that
		$ \left\Vert (u_k-u)_{-}\right\Vert_{L^{2^\dagger}}$ goes to 0 and $u\geq 0$ on $\mathbb{R}^{n-1}$. Since the subsequence is arbitrary, the proof is complete.
	\hfill$\Box$
	
	\begin{theorem}\label{ql1}
		Let $n\geq 3$ and $\nu\geq 1$ be positive numbers. Let $(u_k)_{k\in \mathbb{N}}\subset H^1(\mathbb{R}_{+}^n)$ be a sequence of functions such that $(\nu-\frac{1}{2})S_{\mathrm{E}}(n)^{n-1}\leq \int_{\mathbb{R}_{+}^{n}}|\nabla u_k|^2\leq (\nu+\frac{1}{2})S_{\mathrm{E}}(n)^{n-1}$, and assume that
		\begin{equation*}
			\left\Vert \Delta u_k +u_k^p\right\Vert_{H^{-1}}\rightarrow 0\quad as\; k\rightarrow \infty,
		\end{equation*}
		\begin{equation}\label{aqaqaq}
			\left\Vert (u_k)_{-}\right\Vert_{L^{2^\dagger}(\mathbb{R}^{n-1})}\rightarrow 0\quad as\; k\rightarrow \infty.
		\end{equation}
		Then there exist a subsequence of $(u_k)_k$, which we still denote by $(u_k)_k$, a sequence $(z_1^{(k)},...,z_{\nu}^{(k)})$ of $\nu$-tuples of points in $\mathbb{R}^{n-1}$ and a sequence $(\lambda_1^{(k)},...,\lambda_{\nu}^{k})$ of $\nu$-tuples of positive real numbers such that
		\begin{equation}\label{rer1}
			\left\Vert \nabla\left(u_k-\sum\limits_{i=1}^{\nu}U[z_i^{(k)},\lambda_i^{(k)}]\right)\right\Vert_{L^2}\rightarrow 0\quad as\; k\rightarrow \infty,
		\end{equation}
		\begin{equation}\label{rer2}
			\left\Vert\nabla u_k\right\Vert_{L^2}^2-\sum\limits_{i=1}^{\nu}\left\Vert\nabla U[z_i^{(k)},\lambda_i^{(k)}]\right\Vert_{L^2}^2\rightarrow 0\quad as\; k\rightarrow \infty,
		\end{equation}
		and
		\begin{equation}\label{rer3}
			\min\left(\frac{\lambda_i}{\lambda_j},\frac{\lambda_j}{\lambda_i},\frac{1}{\lambda_i\lambda_j |z_i-z_j|^2}\right)\rightarrow 0\quad as\; k\rightarrow \infty.
		\end{equation}
	\end{theorem}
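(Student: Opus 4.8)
The plan is to bootstrap from the general decomposition in Theorem~\ref{ql}: first extract a decomposition into nontrivial solutions of \eqref{eqt}, then use hypothesis \eqref{aqaqaq} together with the Br\'ezis--Lieb type Lemma~\ref{re3} to force each profile to be nonnegative, and finally invoke Ou's classification to recognize the profiles as Escobar bubbles. \emph{Step 1: reduction to Theorem~\ref{ql}.} Since the H\"older conjugate of $2^\dagger$ equals $2^\dagger/p$ and $H^1(\R_+^n)\hookrightarrow L^{2^\dagger}(\R^{n-1})$, for every $v\in H^1(\R_+^n)$ with $\norm{v}_{H^1}\le1$ we have $\left|\int_{\R^{n-1}}(|u_k|^{p-1}u_k-u_k^p)v\right|\lesssim\norm{(u_k)_-}_{L^{2^\dagger}(\R^{n-1})}^{p}\norm{v}_{H^1}$, which tends to $0$ by \eqref{aqaqaq}; hence $\norm{\lapl u_k+|u_k|^{p-1}u_k}_{H^{-1}}\to0$ and Theorem~\ref{ql} applies, producing an integer $\nu'$, parameters $(z_i^{(k)},\lambda_i^{(k)})_{1\le i\le\nu'}$, and nontrivial solutions $w_1,\dots,w_{\nu'}$ of \eqref{eqt} satisfying \eqref{rr1} and the energy splitting \eqref{rr2}.

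\emph{Step 2: nonnegativity of the profiles.} The point is to carry the smallness of the negative part through the inductive extraction performed inside the proof of Theorem~\ref{ql}. The elementary ingredient is that the rescalings $f\mapsto f[z,\lambda]$ are isometries of $L^{2^\dagger}(\R^{n-1})$ (the amplitude $\lambda^{(n-2)/2}$ raised to the power $2^\dagger$ balances the boundary Jacobian $\lambda^{-(n-1)}$) and commute with $f\mapsto f_-$, hence preserve $\norm{f_-}_{L^{2^\dagger}(\R^{n-1})}$. At the first stage, $v_k=u_k[z_k,\lambda_k]$ satisfies $\norm{(v_k)_-}_{L^{2^\dagger}(\R^{n-1})}\to0$ by \eqref{aqaqaq}, and $v_k\rightharpoonup w_1$ in $H^1(\R_+^n)$, so Lemma~\ref{re3} gives $w_1\ge0$ on $\R^{n-1}$ and $\norm{(v_k-w_1)_-}_{L^{2^\dagger}(\R^{n-1})}\to0$. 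Because the next remainder $u_k-w_1[z_1^{(k)},\lambda_1^{(k)}]$ is a rescaling of $v_k-w_1$, it too has $o(1)$ negative part in $L^{2^\dagger}(\R^{n-1})$, so the argument iterates: every $w_i$ is nonnegative on $\R^{n-1}$.

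\emph{Step 3: identification and the count.} A harmonic $w_i\in H^1(\R_+^n)$ with $w_i\ge0$ on $\R^{n-1}$ is nonnegative in all of $\R_+^n$, since testing the weak form of \eqref{eqt} against $(w_i)_-\in H^1(\R_+^n)$ (which vanishes on $\R^{n-1}$) yields $\int_{\R_+^n}|\nabla(w_i)_-|^2=0$. Thus $w_i$ is a nontrivial nonnegative solution of \eqref{eqeqeq}, so by \cite{ou} it equals an Escobar bubble $U[z_i,\lambda_i]$; in particular $\norm{\nabla w_i}_{L^2(\R_+^n)}^2=S_{\mathrm{E}}(n)^{n-1}$. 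Substituting into \eqref{rr2} gives $\norm{\nabla u_k}_{L^2}^2\to\nu' S_{\mathrm{E}}(n)^{n-1}$, and the sandwich $(\nu-\tfrac12)S_{\mathrm{E}}(n)^{n-1}\le\norm{\nabla u_k}_{L^2}^2\le(\nu+\tfrac12)S_{\mathrm{E}}(n)^{n-1}$ forces $\nu'=\nu$. After absorbing the composed dilations into the parameters, \eqref{rer1} and \eqref{rer2} follow.

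\emph{Step 4: weak interaction, and the main obstacle.} The estimate \eqref{rer3} is the usual asymptotic orthogonality of Struwe profiles, which I would prove by contradiction: if $\mu_{ij}^{(k)}$ stayed bounded away from $0$ along a subsequence for some $i\ne j$, then $U[z_i^{(k)},\lambda_i^{(k)}]$ and $U[z_j^{(k)},\lambda_j^{(k)}]$ would have comparable scales and nearby centers, so by integration by parts, the scaling symmetries, and Lemma~\ref{estimate of bubbles}, $\int_{\R_+^n}\nabla U[z_i^{(k)},\lambda_i^{(k)}]\cdot\nabla U[z_j^{(k)},\lambda_j^{(k)}]=\int_{\partial\R_+^n}U[z_i^{(k)},\lambda_i^{(k)}]^{p}U[z_j^{(k)},\lambda_j^{(k)}]$ would stay bounded away from $0$; but combining \eqref{rr1} with \eqref{rr2} and expanding $\norm{\nabla\sum_i U[z_i^{(k)},\lambda_i^{(k)}]}_{L^2}^2$ forces $\sum_{i\ne j}\int_{\R_+^n}\nabla U[z_i^{(k)},\lambda_i^{(k)}]\cdot\nabla U[z_j^{(k)},\lambda_j^{(k)}]=o(1)$, and since each summand is positive, each is $o(1)$ --- a contradiction. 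I expect the genuine difficulty to be the bookkeeping in Step~2, namely verifying that the parameters produced by the proof of Theorem~\ref{ql} really do realize each successive remainder as a rescaling of the weakly convergent sequence to which Lemma~\ref{re3} is applied, so that the smallness of the negative part survives every extraction step intact; once the $w_i$ are known to be Escobar bubbles, the remaining arguments are routine.
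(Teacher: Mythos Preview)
Your proposal is correct and follows essentially the same approach as the paper: apply Theorem~\ref{ql}, use Lemma~\ref{re3} iteratively to force nonnegativity of each profile, invoke Ou's classification, match $\nu'=\nu$ via the energy quantization, and read off \eqref{rer3} from \eqref{rer1}, \eqref{rer2} and Lemma~\ref{estimate of bubbles}. The only cosmetic difference is that the paper appeals to the Hopf maximum principle to upgrade boundary nonnegativity to interior nonnegativity, whereas you test against $(w_i)_-$; your Step~2 is also more explicit than the paper's one-line ``Using Lemma~\ref{re3} and the Hopf maximum principle, it is easy to see\dots'', and your concern about the bookkeeping there is well placed---that is indeed the only point requiring care, and your observation that rescaling preserves $\norm{f_-}_{L^{2^\dagger}}$ is exactly what makes the induction go through.
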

	
	\begin{remark}
		We do not assume that $u_k$ is nonnegative in the statement, because nonnegativity is not a property preserved under the repeating argument in the proof of Theorem \ref{ql}. In other words, we cannot ensure that $u_k-w_1\left[z_1^{(k)},\lambda_1^{(k)}\right]$ is nonnegative!
	\end{remark}
	
\vskip0.36in

\noindent{\bf Proof of Theorem \ref{ql1}.}  From the proof of Theorem $\ref{ql}$, there exist a number $\nu_0\in\mathbb{N}$, a sequence $(z_1^{(k)},...,z_{\nu_0}^{(k)})$ of $\nu_0$-tuples of points in $\mathbb{R}^{n-1}$, a sequence $(\lambda_1^{(k)},...,\lambda_{\nu_0}^{k})$ of $\nu_0$-tuples of positive numbers and a sequence $(w_i)_{1\leq i\leq \nu_0}$ of nontrivial solutions to \eqref{eqt} satisfying \eqref{rr1} and \eqref{rr2}. From the construction of the functions $(w_i)_{1\leq i\leq \nu_0}$, Lemma \ref{re3}, and the assumption \eqref{aqaqaq}, we deduce that each $w_i\geq 0$ in $\R^{n-1}$ for all $1\leq i\leq\nu_0$. Since each $w_i$ is harmonic in $\R^n_+$, it follows that $w_i\geq 0$ in $\R^n_+$ as well. This can be readily shown by choosing $(w_i)_-$ as a test function. From the classification results, we may assume $w_i=U$ for any $1\leq i\leq\nu_0$. Note that $\Vert\nabla U\Vert_{L^2}^2=S_{\mathrm{E}}(n)^{n-1}$. It follows from \eqref{rr2} that $\nu_0=\nu$. Therefore, \eqref{rer1} and \eqref{rer2} hold. Finally, \eqref{rer3} follows directly from \eqref{rer1}, \eqref{rer2} and \eqref{estimate of bubbles}.
\hfill$\Box$

\vskip0.236in

	\begin{remark}\label{energy gap}
		In the above theorem, the condition \eqref{aqaqaq} can be replaced by
		\begin{equation*}
			\limsup_{k\rightarrow\infty}\int_{\R_+^n}|\nabla(u_k)_+|^2>0
		\end{equation*}
		when we restrict to the case $\nu=1$. This argument is a direct consequence of Theorem \ref{ql} and the following energy gap inequality:
		\begin{equation*}
			\norm{W}_{H^1(\R_+^n)}^2\geq 2S_{\mathrm{E}}(n)^{n-1}
		\end{equation*}
		for any sign-changing solution $W$ of \eqref{eqt}. To prove this inequality, we choose two test functions $W_+\coloneqq\max\{W,0\}$ and $W_-\coloneqq\min\{W,0\}$. Using equation \eqref{eqt} and the Escobar trace inequality, we have
		\begin{equation*}
			\norm{W_+}_{H^1(\R_+^n)}^2=\norm{W_+}_{L^{2^\dagger}(\R^{n-1})}^{2^\dagger}\leq S_{\mathrm{E}}(n)^{-\frac{2^\dagger}{2}}\norm{W_+}_{H^1(\R_+^n)}^{2^\dagger},
		\end{equation*}
		\begin{equation*}
			\norm{W_-}_{H^1(\R_+^n)}^2=\norm{W_-}_{L^{2^\dagger}(\R^{n-1})}^{2^\dagger}\leq S_{\mathrm{E}}(n)^{-\frac{2^\dagger}{2}}\norm{W_-}_{H^1(\R_+^n)}^{2^\dagger}.
		\end{equation*}
		Hence $\norm{W_+}_{H^1}^2,\norm{W_-}_{H^1}^2\geq S_{\mathrm{E}}(n)^{n-1}$. Since $\norm{W}_{H^1}^2=\norm{W_+}_{H^1}^2+\norm{W_-}_{H^1}^2$, we get the desired inequality.
	\end{remark}

\vskip0.36in
	
\section{Quantitative profile decomposition  of  the   Neumann   problem }\label{sec6}
	
	In this section, we follow the idea of Figalli and Glaudo in \cite{figalli} to derive a quantitative profile decomposition for \eqref{eqt}, which is the Euler-Lagrange equation related to the Escobar trace inequality:
	
	\begin{equation}\label{trace}
		\norm*{\nabla \varphi}_{L^2(\R^n_+)}^2 - S_{\mathrm{E}}(n) \norm{\varphi}_{L^{\frac{2(n-1)}{n-2}}(\partial\R^n_+)}^2 \ge 0,\quad \forall\, \varphi\in H^1(\R^n_+).
	\end{equation}
	

\vskip0.23in
	
\noindent{\bf  Proof of Theorem \ref{thm:main_close}.}
		Let $\sigma=\sum\limits_{i=1}^\nu \alpha_i U[z_i,\lambda_i]$ be the linear combination of Escobar bubbles that is closest to $u$ in the $H^1$-norm, that is
		\begin{equation*}
			\norm{\nabla u-\nabla\sigma}_{L^2(\R^n_+)}
			= \min_{\substack{
					\tilde\alpha_1,\dots,\tilde\alpha_\nu\in\R
					\\
					\tilde z_1,\dots,\tilde z_\nu\in\R^{n-1}
					\\
					\tilde \lambda_1,\dots,\tilde \lambda_\nu
			}} \norm*{\nabla u-\nabla\left(
				\sum\limits_{i=1}^\nu \tilde\alpha_i U[\tilde z_i, \tilde \lambda_i]
				\right)}_{L^2(\R_+^n)} .
		\end{equation*}
		Let $\rho\defeq u-\sigma$, and denote $U_i\defeq U[z_i,\lambda_i]$. From \eqref{condition_1}, it follows that $\norm{\nabla\rho}_{L^2(\R_+^n)}\le \delta$. Furthermore, the family $(\alpha_i,U_i)_{1\le i\le \nu}$ is $\delta'$-interacting for some $\delta'$ that goes to zero as $\delta$ goes to $0$. We can assume $(\alpha_i)_{1\le i\le \nu}$ are positive.
		
		Since $\sigma$ minimizes the $H^1$-distance from $u$, $\rho$ is $H^1$-orthogonal to the manifold composed of linear combinations of these $\nu$ Escobar bubbles. That is, for any $1\le i\le \nu$, the following orthogonal conditions hold:
		\begin{align}
			&\int_{\R^n_+}\nabla\rho\cdot\nabla \phi = 0 \ \text{for $\phi=U_i,\,\partial_{\lambda}U_i$ and $\partial_{z_j} U_i$, $1\le j\le n-1$.}\label{ortho-1}
		\end{align}
		Using the fact that each $U_i$ satisfies the equation \eqref{eqeqeq}, the above conditions are equivalent to
		\begin{align}
			& \int_{\partial\R_+^n} \rho\cdot U_i^{p-1}\phi = 0 \ \text{for $\phi=U_i,\,\partial_{\lambda}U_i$ and $\partial_{z_j}U_i$, $1\le j\le n-1$.} \label{ortho-1-1}
		\end{align}
		To estimate $\norm{\nabla \rho}_{L^2(\R_+^n)}$, applying orthogonal condition \eqref{ortho-1}, we obtain
		\begin{equation}\label{main est}
			\begin{split}
				\int_{\R_+^n}|\nabla\rho|^2 ={}& \int_{\R_+^n}\nabla \rho \cdot \nabla u -\int_{\partial\R_+^n} \rho \abs{u}^{p-1}u + \int_{\partial\R_+^n} \rho \abs{u}^{p-1}u\\
				\le{}& \norm{\lapl u+\abs{u}^{p-1}u}_{H^{-1}}\norm{\nabla\rho}_{L^2(\R^n_+)} + \int_{\partial\R_+^n}\rho \abs{u}^{p-1}u.
			\end{split}
		\end{equation}
		To control the second term, we use the elementary estimates
    		\begin{equation}\label{estimate:minus}
            \begin{aligned}
    			\abs*{\abs{a+b}^{p-1}(a+b)-\abs{a}^{p-1}a-p\abs{a}^{p-1}b} \le {}& \begin{cases}
    				C_n\big( \abs{a}^{p-2} \abs{b}^2 + \abs{b}^p \big) & n=3, \\
    				C_n\abs{b}^p & n\ge 4,
    			\end{cases}
    			\\
    			\abs*{\left(\sum\limits_{i=1}^{\nu}a_i\right)\left|\sum\limits_{i=1}^{\nu}a_i\right|^{p-1}
    				-\sum\limits_{i=1}^\nu \abs{a_i}^{p-1}a_i}
    			\lesssim {}&
    			\sum\limits_{1\le i\not=j \le \nu} \abs{a_i}^{p-1}\abs{a_j},
    		\end{aligned}
        \end{equation}
		that hold for any $a,b\in\R$ and for any $a_1,\dots,a_\nu\in\R$. Then
		\begin{align*}
			\abs*{\abs{u}^{p-1}u - \sum\limits_{i=1}^\nu \alpha_i^p U_i^p} \le{}& \abs*{(\rho+\sigma)\abs{\rho+\sigma}^{p-1} - \sigma\abs{\sigma}^{p-1}} + \abs*{\sigma\abs{\sigma}^{p-1} - \sum\limits_{i=1}^\nu \alpha_i^p U_i^p} \\
			\le{}& p\abs{\sigma}^{p-1}\abs{\rho} + C_{n,\nu}\left(\abs{\sigma}^{p-2}\rho^2 + \abs{\rho}^p + \sum\limits_{1\le i\neq j\le \nu} U_i^{p-1} U_j\right).
		\end{align*}
		Combining \eqref{ortho-1-1} we obtain
		\begin{align*}
			\int_{\partial\R_+^n} \abs{u}^{p-1}u\rho \le{}& p \int_{\partial\R_+^n} \sigma^{p-1}\rho^2 + C_{n,\nu}\bigg(\chi_{\{n=3\}}\int_{\partial\R_+^n} \abs{\sigma}^{p-2}\abs{\rho}^3 + \abs{\rho}^{p+1}\bigg)\\& + C_{n,\nu}\sum\limits_{1\le i\neq j\le \nu} \int_{\partial \R_+^n} \abs{\rho} U_i^{p-1} U_j ,
		\end{align*}
		where $\chi_{\{n=3\}}$ means that terms only appear when $n=3$. By H\"{o}lder inequality and the Escobar trace inequality \eqref{trace},
		\begin{align*}
			& \chi_{\{n=3\}}\int_{\partial\R_+^n} \abs{\rho}^3\abs{\sigma}^{p-2} \le \norm{\rho}_{L^{p+1}(\partial \R_+^n)}^3\cdot \norm{\sigma}_{L^{\frac{p+1}{p-2}}(\partial\R_+^n)}\lesssim \norm{\nabla\rho}_{L^2(\R_+^n)}^3,\\
			& \int_{\partial\R_+^n}\rho^{p+1} = \norm{\rho}_{L^{p+1}(\partial\R_+^n)}^{p+1} \lesssim \|\nabla\rho\|_{L^2(\R_+^n)}^{p+1},\\
			& \int_{\partial\R_+^n} \abs{\rho}U_i^{p-1}U_j \le
			\norm{\rho}_{L^{p+1}(\partial\R_+^n)}\norm{U_i^{p-1}U_j}_{L^{\frac{p+1}{p}}(\partial\R_+^n)} \lesssim \norm{\nabla\rho}_{L^2(\R_+^n)}\norm{U_i^{p-1}U_j}_{L^{\frac{p+1}{p}}(\partial \R_+^n)}.
		\end{align*}
		Thanks to Lemma \ref{estimate of bubbles}, if $n=3$, then for any $i\neq j$ it holds
		\begin{equation*}
			\|U_i^{p-1}U_j\|_{L^{\frac{p+1}{p}}(\partial\R_+^n)} \approx  \mu_{ij}^{1/2} \approx \int_{\partial \R_+^n} U_i^{p}U_j.
		\end{equation*}
		Hence,
		\begin{equation}\label{last term}
			\begin{aligned}
				\int_{\partial\R_+^n} \abs{u}^{p-1}u\rho \le{}& p \int_{\partial\R_+^n} \sigma^{p-1}\rho^2 + C_{n,\nu}\left( \norm{\nabla\rho}_{L^2(\R_+^n)}^3 + \norm{\nabla\rho}_{L^2(\R_+^n)}^{p+1}\right)\\
				&+C_{n,\nu}
				\sum\limits_{1\le i\neq j\le \nu}\norm{\nabla\rho}_{L^2(\R_+^n)}\int_{\partial\R_+^n}U_i^pU_j.
			\end{aligned}
		\end{equation}
		We state here the following two inequalities which are important to our analysis. We postpone their proofs to Proposition \ref{sppp} and Proposition \ref{prop:bubbles}.
		\begin{itemize}
			\item For $n\ge 3$, if $\delta'$ is small enough, then
			\begin{align}\label{spectrum estimate}
				\int_{\partial\R_+^n}\sigma^{p-1}\rho^2 \le \frac{c(n,\nu)}{p}\norm{\nabla \rho}_{L^2(\R_+^n)}^2
			\end{align}
			for some $0<c(n,\nu)<1$.
			\item For $n\ge 3$ and a given $\tilde{\epsilon}>0$, we have
			\begin{align}\label{bubble estimate}
				\int_{\partial\R_+^n}U_i^pU_j \le \tilde{\epsilon}\|\nabla \rho\|_{L^2(\R_+^n)} + C\norm{\lapl u+\abs{u}^{p-1}u}_{H^{-1}} + \norm{\nabla\rho}_{L^2(\R_+^n)}^{\min(2,p)} .
			\end{align}
		\end{itemize}
		With these estimates, we can choose $\tilde{\epsilon}$ so that $c(n,\nu) + \nu^2\tilde{\epsilon}C_{n,\nu} < 1$. Combining \eqref{spectrum estimate}, \eqref{bubble estimate} into \eqref{last term} and then into \eqref{main est}, we get
		\begin{align*}
			\big(1-c(n,\nu)-\nu^2 \tilde{\epsilon} c_{n,\nu}\big) \norm{\nabla\rho}_{L^2(\R_+^n)} \lesssim \norm{\lapl u+\abs{u}^{p-1}u}_{H^{-1}} + \norm{\nabla\rho}_{L^2(\R_+^n)}^2 + \norm{\nabla\rho}_{L^2(\R_+^n)}^3
		\end{align*}
		for $n=3$. Since $\norm{\nabla\rho}_{L^2(\R_+^n)} \le \delta$, by choosing $\delta$ small enough we obtain the desired estimate
		\begin{equation}\label{conclusion}
			\norm{\nabla\rho}_{L^2(\R_+^n)} \lesssim \norm{\lapl u+\abs{u}^{p-1}u}_{H^{-1}}
		\end{equation}
		for $n=3$. The only thing left is to check that all $\alpha_i$ can be replaced by $1$ and \eqref{eq:interaction_estimate_statement} holds as well. Thanks to the estimate \eqref{conclusion} and the proof of \eqref{bubble estimate} as in the next subsection, both facts are true.  \hfill$\Box$

\vskip0.36in

	\begin{remark}
		If $n>3$, the proof fails because
		\begin{equation*}
			\norm{U_i^{p-1}U_j}_{L^{\frac{p+1}{p}}(\partial\R_+^n)} \approx \begin{cases}
				\ln(\mu_{ij})^{\frac{n}{2(n-1)}}\mu_{ij}^{\frac{n}4} & n=4 \\
				\mu_{ij} & n\ge 5
			\end{cases} \gg \int_{\partial\R_+^n} U_i^p U_j \approx \mu_{ij}^{\frac{n-2}{2}}.
		\end{equation*}
		But if $\nu=1$, the approximation of $\int_{\partial\R_+^n} \abs{u}^{p-1}u\rho$ will not contain the crossing term $\int_{\partial\R_+^n} \abs{\rho}U_i^{p-1} U_j$. Since \eqref{spectrum estimate} holds for all $n\ge 3$, we can still get the desired stability result:
		
		For $n\ge 3$, there exist a small constant $\delta=\delta(n)>0$ and a large constant $C=C(n)>0$ such that the following statement holds. Let $u\in H^1(\R_+^n)$ be a function such that
		\begin{equation*}
			\norm{\nabla u - \nabla \tilde{U}}_{L^2(\R_+^n)} \le \delta,
		\end{equation*}
		where $\tilde{U}$ is a Escobar bubble, then there exists another Escobar bubble $U$ such that
		\begin{equation*}
			\norm{\nabla u-\nabla U}_{L^2(\R_+^n)} \le C \norm{\lapl u + \abs{u}^{p-1}u}_{H^{-1}},
		\end{equation*}
		where $p=\frac{n}{n-2}$.
	\end{remark}
    \vskip0.2in
	In the following we are devoted to \eqref{spectrum estimate} and \eqref{bubble estimate}. The first estimate \eqref{spectrum estimate} follows directly from the spectral properties of $\rho$. We need the following two lemmas:
	\begin{lemma}\label{lemma:spectral}
		For $n\ge 3$, $x_0\in\R^{n-1}$, $\lambda_0>0$, there exists $\Lambda_3>\Lambda_2=\frac{n}{n-2}$ such that
		\begin{equation*}
			\Lambda_3 \le \frac{\int_{\R_+^n}|\nabla W|^2\,dxdt}{\int_{\partial\R_+^n}U[x_0,\lambda_0]^{\frac{2}{n-2}}W^2\,dx}
		\end{equation*}
		for all $W$ which is orthogonal to $U[x_0,\lambda_0]$, $\partial_\lambda U[x_0,\lambda_0]$ and $\partial_{z_j}U[x_0,\lambda_0]$ ($1\le j\le n-1$).
	\end{lemma}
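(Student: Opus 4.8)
The plan is to turn the Rayleigh quotient into a spectral problem on the boundary hyperplane and then carry it, by a conformal change of variables, to the classical Steklov eigenvalue problem on the unit ball $\B^n$, whose spectrum $\{\sigma_k=k\}_{k\ge 0}$ is explicit. Under the transfer the bubble $U[x_0,\lambda_0]$ becomes a constant, the weighted boundary integral $\int_{\partial\R_+^n}U[x_0,\lambda_0]^{2/(n-2)}W^2$ becomes the plain $L^2$ integral over $\mathbb S^{n-1}$, and the kernel directions $U$, $\partial_\lambda U$, $\partial_{z_j}U$ become the spherical harmonics of degrees $0$ and $1$; the gap then comes from $\sigma_2=2>\sigma_1=1$.

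\emph{Step 1 (normalization and reduction to the trace).} By the simultaneous invariance of numerator and denominator under $W\mapsto W[x_0,\lambda_0]$ and $U[x_0,\lambda_0]\mapsto U[0,1]$, I may take $x_0=0$, $\lambda_0=1$; write $U=U[0,1]$ and $p=\tfrac{n}{n-2}$. Integrating by parts and using that $U,\partial_\lambda U,\partial_{z_j}U$ solve the linearized Escobar problem (the linearized boundary conditions listed in the preliminaries), the hypothesis $\int_{\R_+^n}\nabla W\cdot\nabla\phi=0$ is equivalent, for these $\phi$, to $\int_{\partial\R_+^n}f\,U^{p-1}(\phi|_{\partial\R_+^n})=0$ with $f\defeq W|_{\partial\R_+^n}$; in particular it depends only on the trace $f$. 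Since the Poisson extension $\mathcal H f$ minimizes $\int_{\R_+^n}\abs{\nabla\,\cdot\,}^2$ among functions with trace $f$ and still satisfies the orthogonality, it suffices to prove the bound for $W=\mathcal H f$.

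\emph{Step 2 (conformal transfer) and Step 3 (Steklov spectrum).} Let $F\colon\overline{\R_+^n}\to\overline{\B^n}$ be a conformal diffeomorphism whose boundary trace is, up to an isometry of $\mathbb S^{n-1}$, the inverse stereographic projection, and let $\Phi>0$ be its conformal factor, $F^*\delta_{\B^n}=\Phi^{4/(n-2)}\delta_{\R_+^n}$; since the Escobar bubble is (up to a constant) exactly this conformal factor, $U=c_n\Phi$ with $c_n=\bigl(\tfrac{n-2}{2}\bigr)^{(n-2)/2}$, so $c_n^{p-1}=\tfrac{n-2}{2}$. Setting $\tilde W\defeq\Phi^{-1}(W\circ F^{-1})$, the conformal covariance of Escobar's boundary Yamabe quadratic form — both flat metrics have vanishing scalar curvature, $\partial\R_+^n$ is totally geodesic, and $\mathbb S^{n-1}$ has constant mean curvature in $\B^n$ — gives
\begin{equation*}
\int_{\R_+^n}\abs{\nabla W}^2=\int_{\B^n}\abs{\nabla\tilde W}^2+\frac{n-2}{2}\int_{\mathbb S^{n-1}}\tilde W^2\,d\sigma ,
\end{equation*}
while the conformal invariance of the critical boundary integral ($p+1=\tfrac{2(n-1)}{n-2}$, and $U^{p-1}f^2=c_n^{p-1}(\Phi|_{\partial})^{p+1}\tilde f^2$) yields $\int_{\partial\R_+^n}U^{p-1}f^2=\tfrac{n-2}{2}\int_{\mathbb S^{n-1}}\tilde f^2\,d\sigma$, with $\tilde f\defeq\tilde W|_{\mathbb S^{n-1}}$. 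Now $\int_{\B^n}\abs{\nabla\tilde W}^2$ is at least the Steklov Dirichlet energy of $\tilde f$, and the Steklov eigenvalues of $\B^n$ are $\sigma_k=k$ with eigenspaces the spherical harmonics of degree $k$. Using $\partial_\lambda U(x,0)=\tfrac{n-2}{2}U(x,0)\tfrac{1-\abs{x}^2}{1+\abs{x}^2}$ and $\partial_{z_j}U(x,0)\propto\tfrac{x_j}{1+\abs{x}^2}U(x,0)$, one checks that the conformal transforms of $U,\partial_\lambda U,\partial_{z_1}U,\dots,\partial_{z_{n-1}}U$ are the constant and the $n$ coordinate functions on $\mathbb S^{n-1}\subset\R^n$ — precisely the spherical harmonics of degree $\le 1$. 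Hence the orthogonality forces $\tilde f$ into the span of harmonics of degree $\ge 2$, so $\int_{\B^n}\abs{\nabla\tilde W}^2\ge 2\int_{\mathbb S^{n-1}}\tilde f^2\,d\sigma$, and the Rayleigh quotient is $\ge\bigl(2+\tfrac{n-2}{2}\bigr)/\tfrac{n-2}{2}=\tfrac{n+2}{n-2}$. Thus $\Lambda_3=\tfrac{n+2}{n-2}$ works, and the degree-$1$ layer reproduces $\Lambda_2=\tfrac{n}{n-2}$.

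The hard part is Step 2: fixing the conformal diffeomorphism and the exponent of $\Phi$ so that Escobar's functional transforms with exactly the boundary term $\tfrac{n-2}{2}\int_{\mathbb S^{n-1}}\tilde W^2$, and so that the $U^{p-1}$-weighted $L^2$ norm becomes the unweighted $L^2(\mathbb S^{n-1})$ norm — the identity $U=c_n\Phi$ and the value $c_n^{p-1}=\tfrac{n-2}{2}$ are best verified directly against $W=U$, for which the left side of the displayed identity is $\int_{\partial\R_+^n}U^{p+1}$ and the right side is $\tfrac{n-2}{2}c_n^{2}\abs{\mathbb S^{n-1}}$. Once this conformal dictionary is set up, Step 3 is immediate from the classical ball Steklov spectrum and the standard identification of the kernel of the linearized Escobar operator with the two lowest spherical-harmonic layers.
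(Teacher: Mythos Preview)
Your proposal is correct and follows essentially the same approach as the paper: although the lemma is first cited to Ho, the paper's own Section~\ref{sec7} (Theorem~\ref{spect0}) carries out exactly your conformal transfer to the ball, identifies $U,\partial_\lambda U,\partial_{z_j}U$ with the Steklov eigenspaces $E_0,E_1$, and reads off the sharp constant $\Lambda_3=\kappa_2=\tfrac{n+2}{n-2}$ from $\mu_2=2$. The only cosmetic difference is that the paper handles the non-harmonic part via the orthogonal summand $R_{-1}=H^1_0(\B^n)$ in the decomposition \eqref{decom2}, whereas you first reduce to harmonic extensions; both are equivalent since $R_{-1}$ contributes nothing to the denominator.
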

	For the proof of this lemma, see Ho \cite[Lemma 3.1]{Ho}.
	\begin{remark}
		A refined spectral gap inequality is given in Theorem \ref{spect0} in Section \ref{sec7}.
	\end{remark}
	\begin{lemma}\label{lemma:localization}
		For any $n\ge 3$, $\nu\in\N$ and $\epsilon>0$, there exists $\delta=\delta(n,\nu,\epsilon)>0$ such that if $\{U_i=U[z_i,\lambda_i]\}_{i=1}^\nu$ is a $\delta$-interacting family of $\nu$ Escobar bubbles, then there exist $\nu$ Lipschitz function $\Phi_i:\R^n\to [0,1]$ satisfying
		\begin{enumerate}
			\item Almost all mass of $U_i^{2^\dagger}$ on $\partial\R_+^n$ is in the region $\{\Phi_i=1\}$, that is
			\begin{equation*}
				\int_{\{\Phi_i=1\}\cap \partial\R_+^n} U_i^{2^\dagger} \ge (1-\epsilon)S_{\mathrm{E}}(n)^{n-1}
			\end{equation*}
			\item In the region $\{\Phi_i>0\}\cap \partial\R_+^n$, we have $\epsilon U_i > U_j$ for any $j\neq i$.
			\item The $L^n$-norm of $\Phi_i$ is small, that is
			\begin{equation*}
				\norm{\nabla \Phi_i}_{L^n(\R_+^n)} \le \epsilon.
			\end{equation*}
			\item For any $j\neq i$ such that $\lambda_j\le \lambda_i$, we have
			\begin{equation*}
				\frac{\sup_{\{\Phi_i>0\}\cap \partial\R_+^n} U_j}{\inf_{\{\Phi_i>0\}\cap \partial\R_+^n}U_j}<1+\epsilon.
			\end{equation*}
		\end{enumerate}
	\end{lemma}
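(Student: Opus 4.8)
The plan is to transplant the localization construction of Figalli and Glaudo \cite{figalli} to the half-space. All four conclusions are invariant under the dilations $(x,t)\mapsto(\lambda(x-z),\lambda t)$ of $\R^n_+$ --- in particular $\|\nabla\Phi\|_{L^n(\R^n_+)}$ is scale invariant because $n$ is the ambient dimension --- so it suffices to fix one index $i$ and assume, after rescaling, that $U_i=U[0,1]$. Then the remaining bubbles are $U_j=U[\zeta_j,\mu_j]$, $j\ne i$, and the $\delta$-interacting hypothesis reads $\min(\mu_j,\mu_j^{-1},(\mu_j|\zeta_j|^2)^{-1})\le\delta$ for all $j\ne i$. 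Throughout, $B_r$ denotes a ball in $\R^{n-1}$ or the corresponding half-ball in $\R^n_+$, as dictated by context.

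I would fix the auxiliary parameters in a strict order. First pick $R=R(n,\epsilon)$ so that $\int_{\{|x|>R\}}U[0,1]^{2^\dagger}\,dx<\tfrac\epsilon2 S_{\mathrm E}(n)^{n-1}$ on $\partial\R^n_+$, possible since $U[0,1]^{2^\dagger}$ is integrable on $\R^{n-1}$. Then pick integers $M=M(n,\nu,\epsilon)$, $M'=M'(n,\nu,\epsilon)$ so large that a radial logarithmic cutoff spanning $M$ (resp. $M'$) scales has $L^n(\R^n_+)$-gradient at most $\epsilon/(2\nu)$; this is the elementary estimate $\|\nabla\eta\|_{L^n(\R^n_+)}^n\lesssim(\log M)^{1-n}\to0$, valid since $n\ge2$. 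Put $C_\epsilon\defeq MR$. Let $\Phi^{(0)}$ be a logarithmic cutoff in $r\defeq|(x,t)|$ that is $1$ on $\{r\le R\}$, $0$ on $\{r\ge C_\epsilon\}$, $[0,1]$-valued, with $\|\nabla\Phi^{(0)}\|_{L^n(\R^n_+)}\le\epsilon/(2\nu)$. Call $j\ne i$ \emph{bad} if $\mu_j>1$ and $|\zeta_j|\le2C_\epsilon$; for such $j$ the interaction bound forces $\mu_j\ge(4\delta C_\epsilon^2)^{-1}$, hence $\rho_j\defeq C''(n,\epsilon)\mu_j^{-1/2}$ satisfies $\rho_j\le2C''(n,\epsilon)C_\epsilon\sqrt\delta$, where $C''$ is chosen large enough that $U_j<\epsilon\,U[0,1]$ on $(B_{2C_\epsilon}\setminus B_{\rho_j}(\zeta_j))\cap\R^{n-1}$ (there $U_j$ is in its tail while $U[0,1]\gtrsim_\epsilon1$). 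For each bad $j$ let $\Phi^{(j)}$ be a logarithmic cutoff in $|(x-\zeta_j,t)|$ equal to $0$ on $\{|(x-\zeta_j,t)|\le\rho_j\}$, equal to $1$ on $\{|(x-\zeta_j,t)|\ge M'\rho_j\}$, with $L^n(\R^n_+)$-gradient $\le\epsilon/(2\nu)$. Set $\Phi_i\defeq\Phi^{(0)}\prod_{j\text{ bad}}\Phi^{(j)}$, which is manifestly Lipschitz and $[0,1]$-valued.

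Choosing $\delta=\delta(n,\nu,\epsilon)$ small at the end, the four properties follow. Property (3) holds because $\Phi_i$ is a product of at most $\nu$ factors, each with $L^n(\R^n_+)$-gradient $\le\epsilon/(2\nu)$, and $|\nabla\Phi_i|\le\sum_k|\nabla\Phi^{(k)}|$. For (1), $\{\Phi_i=1\}\cap\partial\R^n_+\supseteq B_R\setminus\bigcup_{j\text{ bad}}B_{M'\rho_j}(\zeta_j)$, and the excised balls carry $U[0,1]^{2^\dagger}$-mass $\lesssim_{n,\nu}(M'\rho_j)^{n-1}\lesssim_{n,\nu,\epsilon}\delta^{(n-1)/2}$, negligible for small $\delta$. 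For (2), on $\{\Phi_i>0\}\cap\partial\R^n_+\subseteq B_{C_\epsilon}\setminus\bigcup_{j\text{ bad}}B_{\rho_j}(\zeta_j)$ the bad indices are controlled by the definition of $\rho_j$, while every other $j$ satisfies, on $B_{C_\epsilon}$, $U_j\lesssim_{n,C_\epsilon}\delta^{(n-2)/2}\,U[0,1]<\epsilon\,U[0,1]$ --- one checks separately $\mu_j\le\delta$, the case $\delta<\mu_j\le1$ (whence $|\zeta_j|\ge(\delta\mu_j)^{-1/2}$), and $\mu_j>1$ with $|\zeta_j|>2C_\epsilon$, bounding $U_j$ by its peak $\mu_j^{(n-2)/2}$ or by its tail $\mu_j^{-(n-2)/2}|x-\zeta_j|^{-(n-2)}$. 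For (4), if $\lambda_j\le\lambda_i$, i.e. $\mu_j\le1$, then either $\mu_j\le\delta$ or $|\zeta_j|\ge(\delta\mu_j)^{-1/2}\gg C_\epsilon$; in either case $U_j$ varies slowly over $B_{C_\epsilon}\supseteq\{\Phi_i>0\}\cap\partial\R^n_+$, with $\log(\sup U_j/\inf U_j)\lesssim_n\max(\mu_j C_\epsilon,\,C_\epsilon|\zeta_j|^{-1})\lesssim_{n,C_\epsilon}\sqrt\delta<\epsilon$.

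The main obstacle is the competition between (1),(3), which want $\{\Phi_i>0\}$ spread over $\sim\log M$ scales out to radius $\sim C_\epsilon$, and (2),(4), which want $\{\Phi_i>0\}$ confined to the region where $U_i$ truly dominates and the flatter bubbles are almost constant. This is resolved precisely by the order of quantifiers: $\epsilon$ first, then $R,M,M'$ and hence $C_\epsilon$, and only then $\delta\to0$, so that the scale separation forced by the $\delta$-interacting hypothesis overwhelms the now-fixed size $C_\epsilon$ of the support. The secondary difficulty --- a far more concentrated bubble nested inside $B_{C_\epsilon}$ --- is exactly what the extra factors $\Phi^{(j)}$ handle, the point being that such a bubble's core has radius $\rho_j=O(\sqrt\delta)$, so removing it costs a negligible amount of both $U_i^{2^\dagger}$-mass and $L^n$-gradient.
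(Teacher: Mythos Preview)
Your proposal is correct and follows essentially the same approach as the paper: normalize to $U_i=U[0,1]$, build $\Phi_i$ as a product of a large logarithmic cutoff centered at the origin with smaller logarithmic excisions around the more concentrated nearby bubbles (your ``bad'' indices coincide with the paper's set $J=\{j:\lambda_j>1,\ |z_j|<2R\}$), and then verify the four properties by exploiting the scale separation forced by $\delta\to0$. The paper itself only writes down the form of $\Phi_i$ and refers to \cite[Lemma~3.9]{figalli} for the computations, so your write-up is in fact more detailed than the paper's own sketch; the order of choosing parameters ($\epsilon$, then $R,M,M'$, then $\delta$) that you emphasize is exactly the mechanism implicit in that reference.
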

	The key point is to construct the following cut-off function:
	\begin{equation*}
		\varphi=\varphi_{x_0,r,R}:\R^n\to [0,1],\quad \varphi=\begin{cases}
			1 & |x-x_0|\le r,\\
			\frac{\ln R-\ln |x-x_0|}{\ln R-\ln r}& r<|x-x_0|<R,\\
			0 & R<|x-x_0|.
		\end{cases}
	\end{equation*}
	For a fixed $1\le i \le \nu$, we may assume $U_i=U[0,1]$ and take $\Phi_i$ as the following form:
	\begin{equation*}
		\Phi_i\defeq \varphi_{0,\epsilon R,R} \prod_{j\in J}(1-\varphi_{(z_j,0),R_j,\epsilon^{-1}R_j}),
	\end{equation*}
	where $J=\{1\le j\le \nu : \lambda_j > 1 \text{ and } |z_j|<2R \}$. Then, we can choose suitable $R,\epsilon,R_j$ to ensure that $\Phi_i$ satisfies all conditions. Note that $\varphi_{(z,0),r,R}(x',0)$ is exactly the cut-off function in $\R^{n-1}=\partial \R_+^n$ with a similar expression, so the computations are similar to those of \cite[Lemma 3.9]{figalli} and we omit the details.
	
	With these two lemmas, we can now prove \eqref{spectrum estimate}:
	\begin{proposition}\label{sppp}
		Let $n\ge 3$ and $\nu\in \N$. There exists a constant $\delta=\delta(n,\nu)>0$ such that if $\sigma=\sum\limits_{i=1}^\nu\alpha_i U[z_i,\lambda_i]$ is a linear combination of Escobar bubbles and $\rho\in H^1(\R_+^n)$ satisfies \eqref{ortho-1-1} with $U_i=U[z_i,\lambda_i]$, then
		\begin{equation*}
			\int_{\partial\R_+^n} \sigma^{p-1}\rho^2 \le \frac{c}{p}\int_{\R_+^n}|\nabla\rho|^2,
		\end{equation*}
		where $p=\frac{n}{n-2}$ and $c=c(n,\nu)<1$.
	\end{proposition}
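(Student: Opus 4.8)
\emph{Proof proposal.} The plan is to localise the spectral gap inequality of Lemma~\ref{lemma:spectral} around each bubble by means of the cut-off functions $\Phi_i$ from Lemma~\ref{lemma:localization}, in the spirit of Figalli and Glaudo. Fix a parameter $\epsilon>0$, to be chosen at the very end depending only on $n$ and $\nu$, and let $\delta=\delta(n,\nu,\epsilon)$ be the threshold produced by Lemma~\ref{lemma:localization}; we shrink $\delta$ further so that the coefficients obey $\abs{\alpha_i-1}\le\delta$, hence $\alpha_i^{p-1}\le 1+C(n)\epsilon$. We may normalise each $U_i=U[0,1]$ when it is the one under consideration, so that $U_i,\ \partial_\lambda U_i,\ \partial_{z_j}U_i$ are all $\lesssim_n U_i$ pointwise. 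First I would record two elementary facts. By property~(2) of Lemma~\ref{lemma:localization}, on $\{\Phi_i>0\}$ we have $U_j\le\epsilon U_i$ for $j\neq i$, so there $0\le\sigma\le(1+C\nu\epsilon)\alpha_i U_i$ and therefore $\sigma^{p-1}\le(1+C\epsilon)\alpha_i^{p-1}U_i^{p-1}$; moreover property~(2) forces the sets $\{\Phi_i>0\}$ to be pairwise disjoint (an overlap would give $\epsilon^2>1$), hence so are the sets $\{\Phi_i=1\}$ and $\sum_i\Phi_i^2\le 1$. Write $\partial\R_+^n=\big(\bigsqcup_i\{\Phi_i=1\}\big)\sqcup R_0$. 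On $R_0$ use the crude bound $\sigma^{p-1}\lesssim_{n,\nu}\sum_j U_j^{p-1}$, Hölder with the exponents dual to $L^{2^\dagger}(\partial\R_+^n)$ together with $(p-1)(n-1)=2^\dagger$, the mass estimate $\int_{R_0}U_j^{2^\dagger}\le\int_{\{\Phi_j<1\}}U_j^{2^\dagger}\le\epsilon\,S_{\mathrm{E}}(n)^{n-1}$ from property~(1), and the Escobar trace inequality $\norm{\rho}_{L^{2^\dagger}(\partial\R_+^n)}^2\lesssim\norm{\nabla\rho}_{L^2(\R_+^n)}^2$, to get
\begin{equation*}
	\int_{R_0}\sigma^{p-1}\rho^2\le C(n,\nu)\,\epsilon^{\frac1{n-1}}\int_{\R_+^n}\abs{\nabla\rho}^2 .
\end{equation*}

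Next I would handle each core region $\{\Phi_i=1\}$. Since $\Phi_i\equiv1$ there and the integrand is nonnegative,
\begin{equation*}
	\int_{\{\Phi_i=1\}}\sigma^{p-1}\rho^2\le(1+C\epsilon)\alpha_i^{p-1}\int_{\partial\R_+^n}U_i^{p-1}(\Phi_i\rho)^2 .
\end{equation*}
The point is that $\Phi_i\rho$ only \emph{approximately} satisfies the orthogonality relations \eqref{ortho-1-1}. For $\phi\in V_i\defeq\operatorname{span}\{U_i,\partial_\lambda U_i,\partial_{z_1}U_i,\dots,\partial_{z_{n-1}}U_i\}$, \eqref{ortho-1-1} gives $\int_{\partial\R_+^n}(\Phi_i\rho)U_i^{p-1}\phi=\int_{\partial\R_+^n}(\Phi_i-1)\rho\,U_i^{p-1}\phi$, and since $\abs{U_i^{p-1}\phi}\lesssim_n U_i^{p}$ and $p\cdot\frac{2(n-1)}{n}=2^\dagger$, Hölder and property~(1) bound the right-hand side by $\lesssim_{n}\epsilon^{\theta}\norm{\nabla\rho}_{L^2(\R_+^n)}$ for some $\theta=\theta(n)>0$. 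Writing $\Phi_i\rho=w_i+\psi_i$ with $\psi_i\in V_i$ and $w_i\perp V_i$ in the weighted inner product $\int_{\partial\R_+^n}U_i^{p-1}(\cdot)(\cdot)$ — equivalently, since the elements of $V_i$ are eigenfunctions of the underlying Steklov-type problem, $w_i$ is orthogonal to $V_i$ for $\int_{\R_+^n}\nabla(\cdot)\cdot\nabla(\cdot)$ as well — the Gram matrix of $V_i$ is a fixed invertible matrix, so the coefficients of $\psi_i$ are $O(\epsilon^\theta\norm{\nabla\rho})$ and hence $\norm{\nabla\psi_i}_{L^2(\R_+^n)}+\big(\int_{\partial\R_+^n}U_i^{p-1}\psi_i^2\big)^{1/2}\lesssim\epsilon^\theta\norm{\nabla\rho}_{L^2(\R_+^n)}$. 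Since $\int_{\partial\R_+^n}U_i^{p-1}w_i\psi_i=0$, Lemma~\ref{lemma:spectral} applied to $w_i$ yields
\begin{equation*}
	\int_{\partial\R_+^n}U_i^{p-1}(\Phi_i\rho)^2=\int_{\partial\R_+^n}U_i^{p-1}w_i^2+O\big(\epsilon^{2\theta}\norm{\nabla\rho}^2\big)\le\frac1{\Lambda_3}\int_{\R_+^n}\abs{\nabla w_i}^2+C\epsilon^{2\theta}\norm{\nabla\rho}^2 .
\end{equation*}

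Then I would reassemble the gradient. Expanding $\abs{\nabla(\Phi_i\rho)}^2=\Phi_i^2\abs{\nabla\rho}^2+2\Phi_i\rho\,\nabla\Phi_i\cdot\nabla\rho+\rho^2\abs{\nabla\Phi_i}^2$ and using property~(3), Hölder, and the Sobolev inequality $\norm{\rho}_{L^{2n/(n-2)}(\R_+^n)}\lesssim\norm{\nabla\rho}_{L^2(\R_+^n)}$, one gets $\int_{\R_+^n}\abs{\nabla(\Phi_i\rho)}^2\le\int_{\R_+^n}\Phi_i^2\abs{\nabla\rho}^2+C\epsilon\norm{\nabla\rho}^2$, and combining with $\abs{\int\nabla(\Phi_i\rho)\cdot\nabla\psi_i}\lesssim\epsilon^\theta\norm{\nabla\rho}^2$ also $\int_{\R_+^n}\abs{\nabla w_i}^2\le\int_{\R_+^n}\Phi_i^2\abs{\nabla\rho}^2+C\epsilon^\theta\norm{\nabla\rho}^2$. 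Summing over $i$, using $\alpha_i^{p-1}\le1+C\epsilon$, $\sum_i\Phi_i^2\le1$, and the $R_0$ bound from the first paragraph,
\begin{equation*}
	\int_{\partial\R_+^n}\sigma^{p-1}\rho^2\le\Big(\tfrac{1+C\epsilon}{\Lambda_3}+C(n,\nu)\,\epsilon^{\min(\theta,\,1/(n-1))}\Big)\int_{\R_+^n}\abs{\nabla\rho}^2 .
\end{equation*}
Since $\Lambda_3>\Lambda_2=p$ by Lemma~\ref{lemma:spectral}, the bracketed coefficient converges to $1/\Lambda_3<1/p$ as $\epsilon\to0$; choosing $\epsilon=\epsilon(n,\nu)$ so small that it is $<c(n,\nu)/p$ with $c(n,\nu)\defeq p\Lambda_3^{-1}(1+C\epsilon)+p\,C(n,\nu)\epsilon^{\min(\theta,1/(n-1))}<1$ finishes the proof, and this choice of $\epsilon$ fixes $\delta=\delta(n,\nu)$ through Lemma~\ref{lemma:localization}.

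\emph{Main obstacle.} The delicate point is the second paragraph: the localised function $\Phi_i\rho$ satisfies the orthogonality conditions only up to an error, so one must project it onto $V_i$ and show that \emph{both} the weighted $L^2(\partial\R_+^n)$ and the $H^1(\R_+^n)$ norms of the projection error are $o(1)\norm{\nabla\rho}$; this rests on the $L^{2^\dagger}$-mass concentration of the bubbles (property~(1)) and on the fact that the two relevant notions of orthogonality coincide on the eigenspace $V_i$. A second, more bookkeeping-type difficulty is ensuring every cut-off error is genuinely quantitatively small, which is exactly where the scaling-critical smallness $\norm{\nabla\Phi_i}_{L^n(\R_+^n)}\le\epsilon$ of property~(3), combined with the Sobolev embedding $H^1(\R_+^n)\hookrightarrow L^{2n/(n-2)}(\R_+^n)$, is used; keeping the sum $\sum_i\Phi_i^2\le1$ (disjoint supports) is essential, since an extra factor $\nu$ here would destroy the gain $1/\Lambda_3<1/p$ for $\nu\ge2$.
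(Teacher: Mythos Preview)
Your proposal is correct and follows essentially the same approach as the paper: localise via the cut-offs $\Phi_i$, bound the off-core region $\{\sum_i\Phi_i^2<1\}$ by H\"older together with the mass concentration property~(1), show that $\Phi_i\rho$ is almost orthogonal to $U_i,\partial_\lambda U_i,\partial_{z_j}U_i$ so that Lemma~\ref{lemma:spectral} applies with error $o(1)\norm{\nabla\rho}^2$, control $\int\abs{\nabla(\Phi_i\rho)}^2$ through property~(3) and the Sobolev embedding $H^1(\R_+^n)\hookrightarrow L^{2^*}(\R_+^n)$, and finally sum using the disjointness of the supports. The only cosmetic differences are that the paper writes the near-orthogonality directly as an error term in the spectral inequality rather than performing an explicit projection $\Phi_i\rho=w_i+\psi_i$, and splits according to $\{\sum\Phi_i^2\ge1\}$ versus its complement, which coincides with your $\bigsqcup_i\{\Phi_i=1\}$ versus $R_0$ since the supports are disjoint and $0\le\Phi_i\le1$.
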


\vskip0.2in
\noindent{\bf Proof of Proposition \ref{sppp}.} Let $\Phi_1,\dots,\Phi_\nu$ be the cut-off functions built in Lemma \ref{lemma:localization} for some $\epsilon=\epsilon(\delta)$. Thanks to Lemma \ref{lemma:localization}-(2), it holds
		\begin{align*}
			\int_{\partial\R_+^n}\sigma^{p-1}\rho^2 \le{}& \int_{\{\sum\limits \Phi^2_i \ge 1\}\cap \partial\R_+^n} \left(\sum\limits_{i=1}^{\nu} \Phi_i^2\right)\sigma^{p-1}\rho^2 + \int_{\{\sum\limits \Phi^2_i < 1\}\cap \partial\R_+^n} \sigma^{p-1}\rho^2 \\
			\le{}& (1+C\epsilon^{p-1})\sum\limits_{i=1}^\nu \int_{\partial \R_+^n} \Phi_i^2\rho^2 U_i^{p-1} + \int_{\{\sum\limits \Phi^2_i < 1\}\cap \partial\R_+^n} \sigma^{p-1}\rho^2.
		\end{align*}
		By Lemma \ref{lemma:localization}-(1), using the H\"{o}lder and Escobar trace inequalities we obtain
		\begin{equation*}
			\int_{\{\sum\limits \Phi^2_i < 1\}\cap \partial\R_+^n} \sigma^{p-1}\rho^2 \le \left(\int_{\{\sum\limits \Phi^2_i < 1\}\cap \partial\R_+^n} \sigma^{p+1}\right)^{\frac{p-1}{p+1}}\norm{\rho}_{L^{p+1}(\partial\R_+^n)}^{2} \le C\epsilon^{\frac 1{n-1}}\norm{\nabla\rho}_{L^2(\R_+^n)}^2.
		\end{equation*}
		Let $\psi:\R_+^n\to \R$ be, up to scaling, one of the functions $U_i,\,\partial_{\lambda}U_i,\, \partial_{z_j}U_i$, with $\int_{\partial\R_+^n} \psi^2U_i^{p-1}=1$. Then by the orthogonal conditions \eqref{ortho-1-1} one gets
		\begin{align*}
			\left|\int_{\partial\R_+^n} (\rho\Phi_i)\psi U_i^{p-1}\right| ={}& \left|\int_{\partial\R_+^n} \rho\psi U_i^{p-1}(1-\Phi_i)\right| \le \left|\int_{\{\Phi_i<1\}\cap \partial\R_+^n} \rho\psi U_i^{p-1}\right| \\
			\le{}& \norm{\rho}_{L^{p+1}}\left(\int_{\partial\R_+^n} \psi^2U_i^{p-1}\right)^{1/2}\left(\int_{\{\Phi_i<1\}\cap \partial\R_+^n} U_i^{p+1} \right)^{\frac{p-1}{p+1}}\\
			\le{}& C\epsilon^{\frac{1}{n-1}}\norm{\nabla\rho}_{L^2(\R_+^n)},
		\end{align*}
		Which means $\rho\Phi_i$ is almost orthogonal to $\psi$. Hence, by Lemma \ref{lemma:spectral},
		\begin{equation*}
			\int_{\partial\R_+^n} (\rho\Phi_i)^2U_i^{p-1} \le \frac{1}{\Lambda_3}\int_{\R_+^n}|\nabla(\rho\Phi_i)|^2 + C\epsilon^{\frac{2}{n-1}}\norm{\nabla\rho}_{L^2(\R_+^n)}^2.
		\end{equation*}
		Note that
		\begin{equation*}
			\int_{\R_+^n}|\nabla(\rho\Phi_i)|^2 = \int_{\R_+^n}|\nabla\rho|^2\Phi_i^2 + \int_{\R_+^n}\rho^2|\nabla\Phi_i|^2 + 2 \int_{\R_+^n}\rho\Phi_i\nabla\rho\cdot\nabla\Phi_i.
		\end{equation*}
		Using the H\"{o}lder and Sobolev inequality (to use Sobolev inequality in the whole space, we need to extend $\rho\in H^1(\R_+^n)$ to $H_0^1(\R^n)$ by $\rho(x,t)\defeq\rho(x,|t|)$), we have
		\begin{align*}
			\int_{\R_+^n}\rho^2|\nabla\Phi_i|^2 \le{}& \norm{\rho}_{L^{2^*}(\R_+^n)}^2\norm{\nabla\Phi_i}_{L^n(\R_+^n)}^2 \lesssim \epsilon^2\norm{\nabla\rho}_{L^2(\R_+^n)}^2,\\
			\int_{\R_+^n}\rho\Phi_i\nabla\rho\cdot\nabla\Phi_i \le{}& \norm{\rho}_{L^{2^*}(\R_+^n)}\norm{\Phi_i}_{L^\infty(\R_+^n)}\norm{\nabla\rho}_{L^2(\R_+^n)}\norm{\nabla\Phi_i}_{L^n(\R_+^n)} \lesssim \epsilon\norm{\nabla\rho}_{L^2(\R_+^n)}^2.
		\end{align*}
		As a consequence of Lemma \ref{lemma:localization}-(2), $\{\Phi_i\}_{i=1}^\nu$ have disjoint supports, and so
		\begin{equation*}
			\sum\limits_{i=1}^\nu \int_{\R_+^n}|\nabla\rho|^2\Phi_i^2 \le \int_{\R_+^n}|\nabla\rho|^2.
		\end{equation*}
		Combining all these estimates, we finally get
		\begin{align*}
			\int_{\partial\R_n^n}\sigma^{p-1}\rho^2 \le \frac{1+o(1)}{\Lambda_3}\int_{\R_+^n}|\nabla\rho|^2 + o(1)\norm{\nabla\rho}_{L^2(\R_+^n)}^2 = \left(\frac{1}{\Lambda_3} + o(1)\right)\norm{\nabla\rho}_{L^2(\R_+^n)}^2,
		\end{align*}
		where $o(1)$ denotes some small quantities going to zero as $\epsilon\to 0$. Since $\Lambda_3>p$, we get the desired result. \hfill$\Box$

\vskip0.36in

	To prove the second estimate \eqref{bubble estimate}, we show the following stronger proposition:
	\begin{proposition}\label{prop:bubbles}
		Let $n\ge 3$ and $\nu\in \N$. For any $\hat{\epsilon}>0$, there exists $\delta=\delta(n,\nu,\hat{\epsilon})>0$ such that the following statement holds. Let $u=\rho + \sum\limits_{i=1}^\nu \alpha_iU_i$, where $(\alpha_i,U_i)_{1\le i\le \nu}$ is $\delta$-interacting, and $\rho$ satisfies the orthogonal conditions \eqref{ortho-1-1} and $\norm{\nabla\rho}_{L^2(\R_+^n)}\le 1$. Then for any $1\le i\le \nu$,
		\begin{equation}\label{estimate of alpha}
			|\alpha_i-1| \lesssim \hat{\epsilon}\norm{\nabla\rho}_{L^2(\R_+^n)} + \norm{\lapl u+\abs{u}^{p-1}u}_{H^{-1}} + \norm{\nabla\rho}_{L^2(\R_+^n)}^{\min(2,p)},
		\end{equation}
		where $p=\frac{n}{n-2}$. And for any pair of indices $i\neq j$,
		\begin{equation}\label{estimate of pairing}
			\int_{\partial\R_+^n} U_i^{p}U_j \lesssim \hat{\epsilon}\norm{\nabla\rho}_{L^2(\R_+^n)} + \norm{\lapl u+\abs{u}^{p-1}u}_{H^{-1}} + \norm{\nabla\rho}_{L^2(\R_+^n)}^{\min(2,p)}.
		\end{equation}
	\end{proposition}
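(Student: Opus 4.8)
The strategy follows Figalli–Glaudo \cite{figalli}: test the equation $\lapl u + \abs{u}^{p-1}u$ against carefully chosen functions that isolate a single bubble. First I would fix an index $i$ and test the weak equation against $\Phi_i^2 U_i$, where $\Phi_i$ is the cut-off function from Lemma \ref{lemma:localization} (with parameter $\epsilon = \epsilon(\delta)$). Using the orthogonality conditions \eqref{ortho-1-1} to kill the leading $\rho$-linear term, the boundary integral $\int_{\partial\R_+^n} \abs{u}^{p-1}u\,\Phi_i^2 U_i$ expands via the elementary estimates \eqref{estimata:minus}, \eqref{estimate:minus2} into: the main term $\alpha_i^p\int \Phi_i^2 U_i^{p+1}$, which by Lemma \ref{lemma:localization}-(1) is $\approx_n \alpha_i^p S_{\mathrm{E}}(n)^{n-1}$; cross-bubble terms $\int \Phi_i^2 U_i U_j^p$ and $\int\Phi_i^2 U_i^p U_j$ for $j\neq i$; and error terms controlled by powers of $\norm{\nabla\rho}_{L^2}$ together with the term $\norm{\lapl u + \abs{u}^{p-1}u}_{H^{-1}}\norm{\nabla(\Phi_i^2 U_i)}_{L^2}$, where the latter gradient norm is $\lesssim 1$ uniformly. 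Matching the left side $\int_{\R_+^n}\nabla u\cdot\nabla(\Phi_i^2 U_i)$ — which, after integrating by parts and using that $U_i$ solves \eqref{eqeqeq}, reduces to $\alpha_i\int\Phi_i^2 U_i^{p+1}$ plus lower-order pieces — against the right side yields $\abs{\alpha_i^p - \alpha_i}\lesssim$ (cross terms) $+\ \hat\epsilon\norm{\nabla\rho}_{L^2} + \norm{\lapl u+\abs{u}^{p-1}u}_{H^{-1}} + \norm{\nabla\rho}_{L^2}^{\min(2,p)}$, and since $\abs{\alpha_i-1}\le\delta$ is small, $\abs{\alpha_i^p-\alpha_i}\approx(p-1)\abs{\alpha_i-1}$, giving \eqref{estimate of alpha} modulo the cross terms.

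To handle the cross terms and simultaneously derive \eqref{estimate of pairing}, I would test the equation against $\Phi_i^2 U_j$ for the specific pair $i\neq j$ (say with $\lambda_i \ge \lambda_j$, so that $\Phi_i$ localizes near the more concentrated bubble). The key algebraic identity is that $\int_{\partial\R_+^n}\abs{u}^{p-1}u\,\Phi_i^2 U_j$, after the same expansion, produces the term $\alpha_i^p\int_{\partial\R_+^n}\Phi_i^2 U_i^p U_j$ as its dominant contribution — and by Lemma \ref{estimate of bubbles 2} together with Lemma \ref{lemma:localization}-(1), this is comparable to the full interaction $\int_{\partial\R_+^n} U_i^p U_j$. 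The remaining pieces are again bounded by $\norm{\lapl u+\abs{u}^{p-1}u}_{H^{-1}}\norm{\nabla(\Phi_i^2 U_j)}_{L^2}$, by $\rho$-error terms, and by strictly smaller bubble interactions $\int U_i^{p-1}U_j^2$, $\int U_i U_j^p$, etc. One must check using Lemma \ref{estimate of bubbles} that all such auxiliary bubble integrals are $\lesssim \mu_{ij}^{\theta}$ with $\theta > \min(\alpha,\beta)/2$ appearing in the leading term, hence negligible relative to $\int U_i^p U_j \approx_n \mu_{ij}^{(n-2)/2}$, after possibly absorbing them via a smallness-of-$\delta$ (i.e. smallness-of-$\mu_{ij}$) argument. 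Here one also needs the estimate $\abs{\partial_\lambda U}\le \frac{n-2}{2\lambda}\abs{U}$ and Lemma \ref{lemma:localization}-(4) to control factors coming from derivatives of the localization.

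The bookkeeping is delicate but self-contained: one runs the two families of tests $(\Phi_i^2 U_i)_i$ and $(\Phi_i^2 U_j)_{i\neq j}$ in tandem, obtaining a linear system whose unknowns are $(\abs{\alpha_i-1})_i$ and $(\int U_i^p U_j)_{i\neq j}$, with a diagonally-dominant (after $\delta$ small) coefficient structure, and solves it to get both \eqref{estimate of alpha} and \eqref{estimate of pairing} with the same right-hand side. Finally, the $\norm{\nabla\rho}_{L^2}^{\min(2,p)}$ exponent arises precisely because the quadratic error $\int\abs{\sigma}^{p-2}\rho^2$-type terms give $\norm{\nabla\rho}_{L^2}^2$ when $p\ge 2$ (i.e. $n\le 4$), while for $n\ge 5$ one has $p = \tfrac{n}{n-2}<2$ and the relevant Taylor remainder $\abs{\abs{\sigma+\rho}^{p-1}(\sigma+\rho) - \abs{\sigma}^{p-1}\sigma - p\abs{\sigma}^{p-1}\rho}\lesssim\abs{\rho}^p$ contributes $\norm{\nabla\rho}_{L^2}^p$ via the Escobar trace inequality \eqref{trace}.

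\medskip

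\emph{Main obstacle.} The hardest part is the interaction estimate — ensuring that when we test against $\Phi_i^2 U_j$ the dominant term really is $\int_{\partial\R_+^n} U_i^p U_j$ and that every competing bubble integral (including those created by $\nabla\Phi_i$) is of strictly higher order in $\mu_{ij}$, uniformly over all $\nu$-tuples in the $\delta$-interacting class. This requires a careful case analysis via Lemma \ref{estimate of bubbles} for the various exponent pairs $(\alpha,\beta)$ that appear, and the logarithmic borderline case $\alpha=\beta=n$ must be checked not to spoil the comparison. For $n=3$ and $\nu\ge 2$ the analysis closes because $p+1 = \tfrac{2(n-1)}{n-2} = 4$ makes the relevant Hölder exponents land exactly where Lemma \ref{estimate of bubbles} gives $\|U_i^{p-1}U_j\|_{L^{(p+1)/p}}\approx \int U_i^p U_j$; for $\nu=1$ there are no cross terms at all and only the $\Phi_1^2 U_1$ test is needed, which is why the theorem extends to all $n\ge 3$ in that case.
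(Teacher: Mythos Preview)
Your proposal has a genuine gap in the choice of the second test function. Testing against $\Phi_i^2 U_j$ (for $j\neq i$) does \emph{not} allow you to kill the $\rho$-linear contribution: the orthogonality conditions \eqref{ortho-1}, \eqref{ortho-1-1} give $\int_{\R_+^n}\nabla\rho\cdot\nabla U_j=0$ and $\int_{\partial\R_+^n}U_i^{p-1}\rho\,\xi=0$ only for $\xi\in\{U_i,\partial_\lambda U_i,\partial_{z_k}U_i\}$, \emph{not} for $\xi=U_j$. Concretely,
\[
\int_{\R_+^n}\nabla\rho\cdot\nabla(\Phi_i^2 U_j)=\int_{\R_+^n}\nabla\rho\cdot\nabla\big((\Phi_i^2-1)U_j\big),
\]
and since the mass of $U_j$ lies outside $\{\Phi_i=1\}$, one has $\norm{\nabla((\Phi_i^2-1)U_j)}_{L^2}\approx 1$, so this term is $O(\norm{\nabla\rho}_{L^2})$ rather than $o(\norm{\nabla\rho}_{L^2})$. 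Likewise the boundary $\rho$-term $p\int_{\partial\R_+^n}U_i^{p-1}\rho\,\Phi_i^2 U_j$ is not controlled by any available orthogonality. With an $O(\norm{\nabla\rho}_{L^2})$ error on the right-hand side, your linear system cannot yield the claimed bound with the small prefactor $\hat\epsilon$.

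The paper's proof avoids this by taking the second test function to be $\Phi_i\,\partial_\lambda U_i$ (not $\Phi_i U_j$): since $\partial_\lambda U_i$ lies in the tangent space $T_{U_i}$, the $\rho$-linear term \emph{is} killed by \eqref{ortho-1-1}, up to an $o(1)$ from the cutoff. The pair $\xi\in\{U_i,\partial_\lambda U_i\}$ then yields two equations in the two unknowns $(\alpha_i-\alpha_i^p)$ and $V_2(0)$, where $V_2=\sum_{j>i}\alpha_jU_j$ is essentially constant on $\{\Phi_i>0\}$ by Lemma~\ref{lemma:localization}-(4); the $2\times 2$ system is nondegenerate because $\max_{\xi}\big|\int U_i^p\xi-\theta\int U_i^{p-1}\xi\big|$ is bounded below uniformly in $\theta$. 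The contribution of $V_1=\sum_{j<i}\alpha_jU_j$ is handled by \emph{induction} on $i$ (after ordering $\lambda_1\ge\cdots\ge\lambda_\nu$), using the symmetry $\int U_i^pU_j=\int U_j^pU_i$. You should replace the $\Phi_i^2 U_j$ test by the $\Phi_i\,\partial_\lambda U_i$ test and organize the argument inductively rather than as a simultaneous linear system.
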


\vskip0.2in
\noindent{\bf Proof of Proposition \ref{prop:bubbles}.}
		Let $U_i=U[z_i,\lambda_i]$ for $1\le i\le \nu$, and we can assume $\lambda_1\ge \lambda_2\ge \cdots \ge \lambda_\nu$. Let $\{\Phi_i\}_{i=1}^\nu$ be the cut-off functions built in Lemma \ref{lemma:localization} for some fixed $\epsilon>0$ depending on $\delta$. We prove the statement by induction on the index $i$. Fix $1\le i\le \nu$ and assume the statement holds for any $1\le j< i$, denote $U=U_i,\, \alpha=\alpha_i$, $V=\sum\limits_{j\neq i}\alpha_jU_j$ and $\Phi=\Phi_i$. Without loss of generality, we may assume $U=U[0,1]$.
		
		First we have the following identity on the boundary $\partial \R_+^n$:
		\begin{align*}
			& (\alpha-\alpha^p)U^p - p(\alpha U)^{p-1}V \\
			={}& -\frac{\partial u}{\partial t} - |u|^{p-1}u - \sum\limits_{j\neq i}\alpha_j U_j^p + \frac{\partial\rho}{\partial t} + p(\alpha U)^{p-1}\rho + \left[|\sigma+\rho|^{p-1}(\sigma+\rho)-\sigma^p - p\sigma^{p-1}\rho \right] \\
			& + \left[(\alpha U+V)^p - (\alpha U)^p - p(\alpha U)^{p-1} V\right] + \left[p\sigma^{p-1}\rho - p(\alpha U)^{p-1}\rho\right].
		\end{align*}
		By Lemma \ref{lemma:localization}-(2), in the region $\{\Phi>0\}$, we have
		\begin{align*}
			& \sum\limits_{j\neq i}\alpha_j U_j^p = o(U^{p-1}V), \\
			& \Big| |\sigma+\rho|^{p-1}(\sigma+\rho)-\sigma^p - p\sigma^{p-1}\rho \Big| \lesssim \abs{\rho}^p + \chi_{\{n=3\}}U^{p-2}\abs{\rho}^2, \\
			& \Big| (\alpha U+V)^p - (\alpha U)^p - p(\alpha U)^{p-1} V \Big| \lesssim \abs{V}^p + \chi_{\{n=3\}} U^{p-2}V^2 = o(U^{p-1}V), \\
			& \Big| p\sigma^{p-1}\rho - p(\alpha U)^{p-1}\rho \Big| = o(U^{p-1}\abs{\rho}).
		\end{align*}
		Here we denote $o(E)$ for any expression that goes to zero when divided by $E$ if $\delta\to 0$, and $\chi_{\{n=3\}}$ means terms that only appears when $n=3$. Applying these estimates into the identity we get
		\begin{equation}\label{est: step 1}
			\begin{split}
				& \left| (\alpha-\alpha^p)U^p - (p\alpha^{p-1}+ o(1))U^{p-1}V - \frac{\partial\rho}{\partial t} + \left(\frac{\partial u}{\partial t} + |u|^{p-1}u \right) - p(\alpha U)^{p-1}\rho\right|\\
				\lesssim{}& \abs{\rho}^p + o(U^{p-1}\abs{\rho}) + \chi_{\{n=3\}} U^{p-2}\rho^2.
			\end{split}
		\end{equation}
		Let $\xi$ be either $U$ or $\partial_\lambda U$, then $\int_{\partial\R_+^n}U^{p-1}\xi\rho = 0$ by the orthogonal conditions of $\rho$. Testing \eqref{est: step 1} by $\xi\Phi$, we get
		\begin{equation}\label{est: step 2}
			\begin{split}
				& \left|\int_{\partial\R_+^n} \Big[ (\alpha-\alpha^p)U^p-(p\alpha^{p-1}+o(1))U^{p-1}V\Big]\xi\Phi\right|\\
				\lesssim{}& \left|-\int_{\partial\R_+^n}\frac{\partial\rho}{\partial t}\xi\Phi+\int_{\partial\R_+^n}\left(\frac{\partial u}{\partial t} + |u|^{p-1}u\right)\xi\Phi\right|+\left|\int_{\partial\R_+^n}U^{p-1}\xi\rho\Phi\right| \\
				& + \int_{\partial\R_+^n}\abs{\rho}^p\abs{\xi}\Phi + o\left(\int_{\partial\R_+^n} U^{p-1}\abs{\xi}\abs{\rho}\Phi\right) + \chi_{\{n=3\}} \int_{\partial\R_+^n} U^{p-2}\rho^2\abs{\xi}\Phi.
			\end{split}
		\end{equation}
		We now bound each term in the right-hand side of \eqref{est: step 2}. Noticing that
		\begin{equation}\label{est: step 3.1}
			\begin{split}
				-\int_{\partial\R_+^n}\frac{\partial\rho}{\partial t}\xi\Phi ={}& \int_{\R_+^n} \nabla\rho\cdot\nabla(\xi\Phi) + \xi\Phi\Delta\rho =  \int_{\R_+^n} \nabla\rho\cdot\nabla(\xi(\Phi-1)) + \xi\Phi\lapl u.
			\end{split}
		\end{equation}
		\begin{equation}\label{est: step 3.2}
			\begin{split}
				\left|\int_{\partial\R_+^n}-\frac{\partial\rho}{\partial t}\xi\Phi+\int_{\partial\R_+^n}\left(\frac{\partial u}{\partial t} + |u|^{p-1}u\right)\xi\Phi\right|
				\le{}& \norm{\lapl u + \abs{u}^{p-1}u}_{H^{-1}} \norm{\nabla(\xi\Phi)}_{L^{2}(\R_+^n)}\\
				& + \norm{\nabla\rho}_{L^2(\R_+^n)}\norm{\nabla(\xi(\Phi-1))}_{L^2(\R_+^n)},\\
				\left|\int_{\partial\R_+^n}U^{p-1}\xi\rho\Phi\right| = \left|\int_{\partial\R_+^n}U^{p-1}\xi\rho(\Phi-1)\right| \lesssim{}& \norm{\nabla\rho}_{L^2(\R_+^n)}\left(\int_{\{\Phi<1\}\cap\partial\R_+^n} (U^{p-1}\abs{\xi})^{\frac{p+1}{p}}\right)^{\frac{p}{p+1}}, \\
				\int_{\partial\R_+^n}\abs{\rho}^p\abs{\xi}\Phi \le \int_{\partial\R_+^n} \abs{\rho}^p\abs{\xi} \lesssim{}& \norm{\nabla\rho}_{L^2(\R_+^n)}^p \norm{\xi}_{L^{p+1}(\partial\R_+^n)},\\
				\int_{\partial\R_+^n} U^{p-1}\abs{\xi}\abs{\rho}\Phi \le \int_{\partial\R_+^n} U^{p-1}\abs{\xi}\abs{\rho} \lesssim{}& \norm{\nabla\rho}_{L^2(\R_+^n)}\norm{U^{p-1}\xi}_{L^{\frac{p+1}{p}}(\partial\R_+^n)}, \\
				\chi_{\{n=3\}} \int_{\partial\R_+^n} U^{p-2}\rho^2\abs{\xi}\Phi \le \int_{\partial\R_+^n} U^{p-2}\rho^2\abs{\xi} \lesssim{}& \norm{\nabla\rho}_{L^2(\R_+^n)}^2\norm{U^{p-2}\xi}_{L^{\frac{p+1}{p-1}}(\partial\R_+^n)}.
			\end{split}
		\end{equation}
		
		Again, thanks to Lemma \ref{lemma:localization}-(1) and Lemma \ref{lemma:localization}-(3), and the observation that $\abs{\partial_\lambda U} \le \frac{n-2}{2}U$, we have
		\begin{align*}
			& \norm{\nabla(\xi(\Phi-1))}_{L^2(\R_+^n)}^2 = \int_{\R_+^n} |\nabla(\xi(\Phi-1))|^2\\
			={}& \int_{\R_+^n} |\nabla\xi|^2(1-\Phi)^2 + \int_{\R_+^n} |\xi|^2|\nabla\Phi|^2 + 2(\Phi-1)\xi\nabla\xi\cdot\nabla\Phi \\
			={}& -\int_{\R_+^n} \xi\, \nabla\cdot((1-\Phi)^2\nabla\xi) - \int_{\partial\R_+^n} (1-\Phi)^2\xi\frac{\partial\xi}{\partial t}\\
			&+ \int_{\R_+^n} |\xi|^2|\nabla\Phi|^2 + \int_{\R_+^n} 2(\Phi-1)\xi\nabla\xi\cdot\nabla\Phi \\
			\lesssim{}& \norm{\xi}_{L^{2^*}(\R_+^n)}\norm{\nabla\Phi}_{L^n(\R_+^n)}\norm{\nabla\xi}_{L^2(\R_+^n)} + \int_{\{\Phi<1\}\cap\partial\R_+^n} U^{p+1} + \norm{\nabla\Phi}_{L^n(\R_+^n)}^2\norm{\xi}_{L^{2^*}(\R_+^n)}^2 \\
			\lesssim{}& \epsilon + \epsilon^2 = o(1).
		\end{align*}
		Thus,
		\begin{equation}\label{est: step 4}
			\begin{alignedat}{3}
				& \norm{\nabla(\xi(\Phi-1))}_{L^2(\R_+^n)} = o(1), \quad  &&\norm{\nabla(\xi\Phi)}_{L^2(\R_+^n)}\lesssim 1, \quad \int_{\{\Phi<1\}\cap\partial\R_+^n} (U^{p-1}\abs{\xi})^{\frac{2(n-1)}{n}} = o(1),\\
				& \norm{\xi}_{L^{p+1}(\partial\R_+^n)} \lesssim 1, &&\norm{U^{p-1}\xi}_{L^{\frac{2(n-1)}{n}}(\partial\R_+^n)} \lesssim 1, \quad\quad  \norm{U^{p-2}\xi}_{L^{\frac{p+1}{p-1}}(\partial\R_+^n)}\lesssim 1.
			\end{alignedat}
		\end{equation}
		Using \eqref{est: step 3.1}, \eqref{est: step 3.2} and \eqref{est: step 4}, it follows by \eqref{est: step 2} that
		\begin{equation}\label{est: step 5}
			\begin{split}
				& \left|\int_{\partial\R_+^n} \Big[ (\alpha-\alpha^p)U^p-(p\alpha^{p-1}+o(1))U^{p-1}V\Big]\xi\Phi\right| \\
				\lesssim{}& o(1)\norm{\nabla\rho}_{L^2(\R_+^n)} + \norm{\nabla\rho}_{L^2(\R_+^n)}^{\min(2,p)} +\norm{\lapl u+\abs{u}^{p-1}u}_{H^{-1}}.
			\end{split}
		\end{equation}
		Now, split $V$ into $V_1\defeq \sum\limits_{j<i}\alpha_jU_j$ and $V_2\defeq \sum\limits_{j>i}\alpha_jU_j$. By induction, we may assume that the statement holds for all $j<i$, and by $\int_{\partial\R_+^n} U_i^p U_j = \int_{\R_+^n} \nabla U_i \cdot \nabla U_j = \int_{\partial\R_+^n} U_j^p U_i$, we get
		\begin{equation}\label{est: step 5.1}
			\int U^{p-1}V_1\abs{\xi}\Phi \lesssim o(1)\norm{\nabla\rho}_{L^2(\R_+^n)} +\norm{\lapl u+\abs{u}^{p-1}u}_{H^{-1}}+ \norm{\nabla\rho}^{\min(2,p)}_{L^2(\R_+^n)}.
		\end{equation}
		For $V_2$, thanks to Lemma \ref{lemma:localization}-(4), $V_2(x)=(1+o(1))V_2(0)$ in the region $\{\Phi>0\}\cap \partial\R_+^n$. To show \eqref{estimate of alpha}, if $\alpha=1$ then it's done. If not, let $\theta\defeq \frac{p\alpha^{p-1}V_2(0)}{\alpha-\alpha^p}$, then by \eqref{est: step 5} and \eqref{est: step 5.1} we have
		\begin{equation}\label{est: step 6.1}
			\begin{split}
				& \abs{\alpha-\alpha^p}\left|\int_{\partial\R_+^n} \big( U^p - (1+o(1))\theta U^{p-1} \big)\xi\Phi\right| \\
				\lesssim{}& o(1)\norm{\nabla\rho}_{L^2(\R_+^n)} + \norm{\lapl u+\abs{u}^{p-1}u}_{H^{-1}} + \norm{\nabla\rho}^{\min(2,p)}_{L^2(\R_+^n)}.
			\end{split}
		\end{equation}
		Since almost all the mass of $U$ on the boundary is in the region $\{\Phi=1\}$, and $\Phi\equiv 1$ on a large ball centered at $0$, we get
		\begin{equation*}
			\int_{\partial\R_+^n} \big( U^p - (1+o(1))\theta U^{p-1} \big)\xi\Phi = \int_{\partial\R_+^n} U^p\xi -\int_{\partial\R_+^n}\theta U^{p-1}\xi + o(1).
		\end{equation*}
		We can easily check that there exists $\epsilon'>0$ such that for any $\theta\in \R$,
		\begin{equation*}
			\max_{\xi\in \{U,\partial_\lambda U\}} \left(\left|\int_{\partial\R_+^n} U^p\xi -\int_{\partial\R_+^n}\theta U^{p-1}\xi\right|\right) \ge \epsilon'.
		\end{equation*}
		Thus, choosing suitable $\xi$ in \eqref{est: step 6.1} we get \eqref{estimate of alpha}. Choosing $\xi=U$ in \eqref{est: step 6.1} implies
		\begin{equation*}
			\left|\int_{\partial\R_+^n} U^pV\Phi \right| \lesssim o(1)\norm{\nabla\rho}_{L^2(\R_+^n)} + \norm{\lapl u+\abs{u}^{p-1}u}_{H^{-1}} + \norm{\nabla\rho}^{\min(2,p)}_{L^2(\R_+^n)}.
		\end{equation*}
		And in particular, since $\Phi\equiv 1$ in $B(0,1)$,
		\begin{equation*}
			\left|\int_{B(0,1)\cap\partial\R_+^n} U^pU_j\right| \lesssim o(1)\norm{\nabla\rho}_{L^2(\R_+^n)} + \norm{\lapl u+\abs{u}^{p-1}u}_{H^{-1}} + \norm{\nabla\rho}^{\min(2,p)}_{L^2(\R_+^n)}.
		\end{equation*}
		This, combining with Lemma \ref{estimate of bubbles 2}, shows \eqref{estimate of pairing} for all $j>i$. Since the case $j<i$ has been shown in \eqref{est: step 5.1}, we get that the estimate \eqref{estimate of pairing} holds for all $j\neq i$, and this concludes the proof by induction.  \hfill$\Box$

\vskip0.36in
As a direct consequence of Theorem \ref{ql1} and Theorem \ref{thm:main_close}, now we can prove Theorem  \ref{them4}  about the  stability result for nonnegative functions.

\vskip0.236in
\noindent{\bf Proof of Theorem  \ref{them4}.}  Based on the qualitative result Theorem \ref{ql1}, there exists $\epsilon>0$ such that, if $\norm{\lapl u+u^p}_{H^{-1}}<\epsilon$, then there exist $\nu$ Escobar bubbles $U_1,U_2,\dots, U_\nu$ such that
\begin{equation*}
\norm*{\nabla u - \sum\limits_{i=1}^{\nu}\nabla U_i}_{L^2(\R_+^n)} \le \delta.
\end{equation*}
This is exactly the hypothesis of Theorem \ref{thm:main_close}. When $\norm{\lapl u+u^p}_{H^{-1}}\geq\epsilon$, Our results follows directly.
\vskip0.13in
Finally, we show that linear decay is the sharpest in our cases. For $0<\delta<1$, we show that there exist a nonnegative function $u\in H^1(\mathbb{R}_{+}^n)$, a constant $C$ depending only on $n,\nu$ and a $\delta$-interacting family $U_1,...,U_\nu$ of Escobar bubbles such that
\begin{equation}\label{z=1}
\left\Vert \nabla\left(u-\sum\limits_{i=1}^{\nu}U_i\right)\right\Vert_{L^2}\leq 2\inf_{W_1,...,W_\nu}\left\Vert \nabla\left(u-\sum\limits_{i=1}^{\nu}W_i\right)\right\Vert_{L^2}\leq \delta
\end{equation}
and
\begin{equation}\label{z=2}
\left\Vert \Delta u +u^p\right\Vert_{H^{-1}}\leq C\left\Vert \nabla\left(u-\sum\limits_{i=1}^{\nu}U_i\right)\right\Vert_{L^2}.
\end{equation}
To prove the estimates \eqref{z=1}-\eqref{z=2}, we  first fix $\nu$ distinct directions $\theta_1,...,\theta_\nu$ in $\mathbb{S}^{n-2}$. For any $0<\epsilon\ll 1\ll R<\infty$, we set $u_{\epsilon,R}(x,t)=\sum\limits_{i=1}^{\nu}U_{R,i}(x,t)+\rho_{\epsilon}(x,t)$, where $U_{R,i}=U[R\theta_i,1]$ and $ \rho_{\epsilon}(x,t)=\left(1-\frac{\sqrt{|x|^2+|t-1|^2}}{\epsilon}\right)_{+}$. Note that $\rho_\epsilon$ is radially symmetric about the point $(0,1)$. It is straightforward to compute that $|\nabla\rho_\epsilon|=\frac{1}{\epsilon}$ within $B^n_{\epsilon}(0,1)$. And consequently, we have $\norm{\nabla\rho_\epsilon}_{L^2}\leq C_n \epsilon^\frac{n-2}{2}$ for some constant $C_n$. When $R$ is sufficiently large, the family $\{U_{R,i}\}_{1\leq i\leq \nu}$ forms a clear $\delta$-interacting set, and moreover, $\Vert\nabla U_{R,i}\Vert_{L^\infty(B^n_{\epsilon})}$ tend to $0$ uniformly as $R\rightarrow\infty$.
		
		Now fix $\epsilon=\left(\frac{\delta}{4C_n}\right)^{\frac{2}{n-2}}$, then there exists $R_{\epsilon}\gg1$ such that, when $R\geq R_{\epsilon}$, we have
		\begin{equation*}
			\frac{\delta}{8}\leq\inf_{W_1,...,W_\nu}\left\Vert \nabla\left(u_{\epsilon,R}-\sum\limits_{i=1}^{\nu}W_i\right)\right\Vert_{L^2}\leq \left\Vert \nabla\left(u_{\epsilon,R}-\sum\limits_{i=1}^{\nu}U_{R,i}\right)\right\Vert_{L^2}\leq\frac{\delta}{2}
		\end{equation*}
		and
		\begin{equation*}
			\begin{aligned}
				\left\Vert \Delta u_{\epsilon,R}+u_{\epsilon,R}^p\right\Vert_{H^{-1}}&\leq  \left\Vert \sum\limits_{i=1}^\nu\left(\Delta U_{R,i}+U_{R,i}^p\right)+\Delta\rho_{\epsilon}+\rho_{\epsilon}^p\right\Vert_{H^{-1}}+\delta\\
				&=\left\Vert \Delta\rho_{\epsilon}\right\Vert_{H^{-1}}+\delta=\frac{5}{4}\delta.
			\end{aligned}
		\end{equation*}
		Thus $u_{\epsilon,R}$ is a function satisfying the requirements.  \hfill$\Box$

\section{Refined estimates for the one bubble case}\label{sec7}
	In this section, we aim to give an explicit upper bound for the stability constant $C_{\mathrm{CP}}(n,1)$. To achieve it, let us first study the non-degeneracy of \eqref{eqt}.
	
	Consider the following conformal map from the unit ball $\B^n$ to $\R_+^n$ ($n\geq 3$):
	\begin{equation*}
		F(y',y_n)=\left(\frac{2y'}{(1+y_n)^2+|y'|^2},\frac{1-|y|^2}{(1+y_n)^2+|y'|^2}\right),\quad y'=(y_1,\dots,y_{n-1})\in\R^{n-1}.
	\end{equation*}
	The inverse map is given by
	\begin{equation*}
		F^{-1}(x,t)=\left(\frac{2x}{(1+t)^2+|x|^2},\frac{1-t^2-|x|^2}{(1+t)^2+|x|^2}\right),\quad x=(x_1,\dots,x_{n-1})\in\R^{n-1}.
	\end{equation*}
	For any function $\varphi\in H^1(\R_+^n)$, we can define $\F[\varphi]$ in $\B^n$ by
	\begin{equation*}
		\F[\varphi](y)=(U[0,1]^{-1}\varphi)\circ F(y)\quad \text{for }y\in\B^n.
	\end{equation*}
	It is not hard to verify the following identities (we refer to \cite[Proof of Proposition 1.2]{Escobar} and \cite[Proof of Lemma 3.1]{Ho} for similar identities):
	\begin{equation*}
		\int_{\R^{n-1}}|\varphi|^{2^\dagger}\,dx=\left(\frac{n-2}{2}\right)^{n-1}\int_{\partial \B^n}|\F[\varphi]|^{2^\dagger}d\sigma,
	\end{equation*}
	\begin{equation*}
		\int_{\R_+^n}|\nabla\varphi|^2\,dxdt=\left(\frac{n-2}{2}\right)^{n-2}\left(\int_{\B^n}|\nabla \F[\varphi]|^2\,dy+\frac{n-2}{2}\int_{\partial\B^n}\F[\varphi]^2\,d\sigma\right),
	\end{equation*}
	\begin{equation*}
		\int_{\R^{n-1}}U[0,1]^{p-1}\varphi^2\,dx=\left(\frac{n-2}{2}\right)^{n-1}\int_{\partial\B^n}\F[\varphi]^2\,d\sigma,
	\end{equation*}
	\begin{equation*}
		\Delta\varphi=0\quad\text{in }\R^n_+\iff \Delta\F[\varphi]=0\quad\text{in }\B^n,
	\end{equation*}
	\begin{equation*}
		\F[U[0,1]]=1,\quad\F[\partial_\lambda U[0,1]]=-\frac{n-2}{2}y_n,\quad\F[\partial_{z_i}U[0,1]]=-\frac{n-2}{2}y_i,\quad1\leq i\leq n-1.
	\end{equation*}
	In the following we always equip $H^1(\B^n)$ with the norm
	\begin{equation*}
		\norm{u}_{H^1(\B^n)}^2\coloneqq \left(\frac{n-2}{2}\right)^{n-2}\left(\int_{\B^n}|\nabla u|^2\,dy+\frac{n-2}{2}\int_{\partial\B^n}u^2\,d\sigma\right).
	\end{equation*}
	From Poincar\'e inequality, this norm is equivalent to the standard Sobolev norm on $\B^n$. Under this norm, $H^1(\B^n)$ is isometric to $H^1(\R_+^n)$ via map $\F$.
	
	It is well known (see \cite{stek} for example) that the quotient
	\begin{equation*}
		\frac{\int_{\B^n}|\nabla u|^2\,dy}{\int_{\partial\B^n}u^2\,d\sigma}
	\end{equation*}
	is related to the Steklov eigenvalue problem on $\B^n$:
	\begin{equation}\label{stek}
		\begin{cases}
			\Delta u=0&\text{in }\B^n\\
			\frac{\partial u}{\partial \Vec{n}}=\mu u&\text{on }\partial\B^n,
		\end{cases}
	\end{equation}
	where $\Vec{n}$ denotes the outward unit normal vector. The Steklov eigenspaces $E_k$ are the restrictions of the spaces $H_k^n$ of homogeneous harmonic polynomials of degree k in $\R^n$. The corresponding eigenvalue $\mu_k=k$ has multiplicity
	\begin{equation*}
		\dim\,E_k=C_{n+k-1}^{n-1}-C_{n+k-3}^{n-1}.
	\end{equation*}
	Note that $E_0$ is spanned by the constant function $1=\F[U[0,1]]$ and $E_1$ is spanned by the $n$ coordinate functions:
	\begin{equation*}
		\left\{y_i=-\frac{2}{n-2}\F[\partial_{z_i} U[0,1]],\ 1\leq i\leq n-1;\ y_n=-\frac{2}{n-2}\F[\partial_\lambda U[0,1]]\right\}.
	\end{equation*}
	The eigenvalue problem provides an orthogonal decomposition of $H^1(\B^n)$:
	\begin{equation}\label{decom}
		H^1(\B^n)=\bigoplus_{i=-1}^\infty E_i,
	\end{equation}
	where $E_{-1}\coloneqq \left\{u\in H^1(\B^n)|\;u=0 \ \text{on }\partial\B^n\right\}=H^1_0(\B^n)$.
	
	Using the isometry $\F$ and identities above, we can transform the equation \eqref{stek} to
	\begin{equation}\label{stek2}
		\begin{cases}
			\Delta u=0&\text{in }\R^n_+\\
			\frac{\partial u}{\partial t}=-\left(1+\frac{2\mu}{n-2}\right)U[0,1]^{p-1}u&\text{on }\R^{n-1}.
		\end{cases}
	\end{equation}
	Set
	\begin{equation}\label{kappa}
		\kappa_k=1+\frac{2\mu_k}{n-2}=1+\frac{2k}{n-2}
	\end{equation} and $R_i=\F^{-1}(E_i)$, then the decomposition \eqref{decom} yields
	\begin{equation}\label{decom2}
		H^1(\R_+^n)=\bigoplus_{i=-1}^\infty R_i,
	\end{equation}
	where $R_{-1}=\left\{u\in H^1(\R_+^n)|\;u=0\ \text{on }\R^{n-1}\right\}$ and $R_k$ is the eigenspace of \eqref{stek2} with eigenvalue $\kappa_k$ for $k\geq 0$. From the explicit expressions of $E_0$ and $E_1$, we have
	\begin{equation*}
		R_0=\F^{-1}(E_0)=\text{Span}\left\{U[0,1]\right\}
        \end{equation*}
        and
            
        \begin{equation*}R_1=\F^{-1}(E_1)=\text{Span}\left\{\partial_{z_i}U[0,1],1\leq i\leq n-1;\partial_{\lambda}U[0,1]\right\}.
	\end{equation*}
	
	Let $T_U=R_0\oplus R_1$ be the tangent space at $U[0,1]$ in the manifold $\mathcal{M}_{\mathrm{E}}$ of Escobar bubbles. Thanks to the arguments above, we can establish the following non-degeneracy result:
	\begin{proposition}[spectral gap]\label{spect0}
		Let $\rho\in T_U^{\perp}$. Then
		\begin{equation*}
			\norm{\rho}^2_{H^1( \R_+^n)}\geq \frac{n+2}{n-2}\int_{\R^{n-1}}U[0,1]^{p-1}\rho^2.
		\end{equation*}
		Moreover, the equality holds if and only if $\rho\in R_2$, i.e. $\F[\rho]$ is a homogeneous harmonic polynomial of degree $2$.
	\end{proposition}
	
	Now we can state and prove our main results.
	\begin{proposition}(asymptotic behavior near $\mathcal{M}_{\mathrm{E}}$)\label{asymp}
		Let $n\geq 3$. Assume $(u_k)_k\subset H^1(\R_+^n)$ is a sequence of functions such that
		\begin{equation*}
			\frac{1}{2}S_{\mathrm{E}}(n)^{n-1}\leq\int_{\R_+^n}|\nabla u_k|^2\leq \frac{3}{2}S_{\mathrm{E}}(n)^{n-1},
		\end{equation*}
		and
		\begin{equation*}
			\left\Vert \Delta u_k +|u_k|^{p-1}u_k\right\Vert_{H^{-1}}\rightarrow 0\quad as\; k\rightarrow \infty.
		\end{equation*}
		Then
		\begin{equation}\label{asympt}
			\liminf_{k\rightarrow\infty}\frac{\left\Vert \Delta u_k +|u_k|^{p-1}u_k\right\Vert_{H^{-1}}}{d(u_k,\mathcal{M}_{\mathrm{E}})}\geq \frac{2}{n+2}.
		\end{equation}
		Moreover, equality holds if and only if the following holds: Assume after suitable normalizations, translations and dilations, $d(u_k,\mathcal{M}_{\mathrm{E}})=\norm{u_k-U[0,1]}_{H^1}$. Let $u_k=\sum\limits_{i=-1}^{\infty}\rho_{k,i}$ be the unique decomposition with respect to \eqref{decom2}. Then (up to some subsequence)
		\begin{equation}\label{asympt1}
			\norm{u_k-U[0,1]-\rho_{k,2}}_{H^1}=o(\norm{\rho_{k,2}}_{H^1}).
		\end{equation}
	\end{proposition}

\vskip0.3in

\begin{remark}
A natural corollary of above theorem is that $C_{\mathrm{CP}}(n,1)\leq\frac{2}{n+2}$. It seems that the proof of Theorem \ref{thm:main_close} may only give the inequality \eqref{asympt}. The proof there cannot give such upper bound because the identification of equality case is not available there. One possible reason is that \eqref{main est} causes a loss.
\end{remark}
	

\vskip0.23in

	Before the proof, let us make two observations:
	\begin{equation*}
		\left\Vert \Delta u_k +|u_k|^{p-1}u_k\right\Vert_{H^{-1}}=\norm{u_k-\mathcal{P}[|u_k|^{p-1}u_k]}_{H^1},
	\end{equation*}
	\begin{align}\label{obs1}
		\int_{\R^{n-1}}u\mathcal{P}[v]=\int_{\R^{n-1}}v\mathcal{P}[u]\quad\text{for any }u,v\in L^{\frac{2(n-1)}{n}}(\R^{n-1}).
	\end{align}
	Recall that the operator $\mathcal{P}$ is defined in \eqref{neumann}.
	
	For simplicity, in the following proofs, we will write $U$ to denote $U[0,1]$.
	
\vskip0.3in

\noindent{\bf Proof of Proposition \ref{asymp}.}
		From Remark \ref{energy gap}, we can assume $d(u_k,\mathcal{M}_{\mathrm{E}})=\norm{u_k-U}_{H^1}$ without loss of generality. Take the decomposition $u_k=\sum\limits_{i=-1}^{\infty}\rho_{k,i}$ with respect to \eqref{decom2}. Since $U$ minimizes the distance between $u_k$ and $\mathcal{M}_{\mathrm{E}}$, one can derive $\rho_{k,1}=0$ by taking variations. Assume $\rho_{k,0}=\beta_kU$. Since $d(u_k,\mathcal{M}_{\mathrm{E}})\rightarrow 0$, we have $\beta_k\rightarrow 1$. Set $v_k=\rho_{k,-1}+\sum\limits_{i=3}^{\infty}\rho_{k,i}$ and $w_k=\rho_{k,-1}+\sum\limits_{i=2}^{\infty}\rho_{k,i}$. Then $u_k=\beta_kU+w_k$. Let us expand the dual norm:
			\begin{align}\label{rev1}
				\left\Vert \Delta u_k +|u_k|^{p-1}u_k\right\Vert_{H^{-1}}^2&=\norm{u_k-\mathcal{P}[|u_k|^{p-1}u_k]}_{H^1}^2\nonumber\\
				&=\norm{u_k}_{H^1}^2+\norm{\mathcal{P}[|u_k|^{p-1}u_k]}_{H^1}^2+2\left\langle u_k,\mathcal{P}[|u_k|^{p-1}u_k]\right\rangle_{H^1}\nonumber\\
				&=\norm{u_k}_{H^1}^2-2\int_{\R^{n-1}}|u_k|^{p+1}+\int_{\R^{n-1}}|u_k|^{p-1}u_k\mathcal{P}[|u_k|^{p-1}u_k]\,.
			\end{align}
        
		Next we aim to estimate the three terms in \eqref{rev1}. For the first two terms, from the orthogonal decomposition and the estimate \eqref{estimate:minus}, we have
		\begin{align}
            &\norm{u_k}_{H^1}^2=\beta_k^2\norm{U}_{H^1}^2+\norm{w_k}_{H^1}^2
        \end{align}
        and
        \begin{align}
			&\int_{\R^{n-1}}|u_k|^{p+1}=\beta_k^{p+1}\int_{\R^{n-1}}U^{p+1}+\frac{p(p+1)}{2}\beta_k^{p-1}\int_{\R^{n-1}}U^{p-1}w_k^2+o(\norm{w_k}_{H^1}^2).
        \end{align}
        For the third term, thanks to the linearity of $\mathcal{P}$, the observation \eqref{obs1} and the orthogonal conditions, we have
        \begin{align}
			&\int_{\R^{n-1}}|u_k|^{p-1}u_k\mathcal{P}[|u_k|^{p-1}u_k]\nonumber\\
			={}&\int_{\R^{n-1}}\left(\beta_k^{p}U^p+p\beta_k^{p-1}U^{p-1}w_k+A\right)\mathcal{P}\left[\beta_k^{p}U^p+p\beta_k^{p-1}U^{p-1}w_k+A\right]\nonumber\\
			={}&\int_{\R^{n-1}}\beta_k^{p}U^p\mathcal{P}[\beta_k^{p}U^p]+\int_{\R^{n-1}}p\beta_k^{p-1}U^{p-1}w_k\mathcal{P}[p\beta_k^{p-1}U^{p-1}w_k]+\int_{\R^{n-1}}A\mathcal{P}[A]\nonumber\\
			&+2\int_{\R^{n-1}}\beta_k^{p}U^p\mathcal{P}[p\beta_k^{p-1}U^{p-1}w_k]+2\int_{\R^{n-1}}p\beta_k^{p-1}U^{p-1}w_k\mathcal{P}[A]+2\int_{\R^{n-1}}\beta_k^{p}U^p\mathcal{P}[A]\nonumber\\
			={}&\beta_k^{2p}\int_{\R^{n-1}}U^{p+1}+p^2\beta_k^{2p-2}\int_{\R^{n-1}}U^{p-1}w_k\mathcal{P}[U^{p-1}w_k]+2\beta_k^{p}\int_{\R^{n-1}}UA\nonumber\\
            &+\int_{\R^{n-1}}A\mathcal{P}[A]+2p\beta_k^{p-1}\int_{\R^{n-1}}U^{p-1}w_k\mathcal{P}[A],
		\end{align}
        where $A=|u_k|^{p-1}u_k-\beta_k^{p}U^p-p\beta_k^{p-1}U^{p-1}w_k$. From the H\"older inequality, the inequality \eqref{escobar}, \eqref{ooo1} and the estimate \eqref{estimate:minus}, we can obtain
        \begin{align}
            \int_{\R^{n-1}}|A\mathcal{P}[A]|\leq \norm*{A}_{L^\frac{2n-2}{n}(\R^{n-1})}\norm*{\mathcal{P}[A]}_{L^\frac{2n-2}{n-2}(\R^{n-1})}\lesssim \norm*{A}_{L^\frac{2n-2}{n}(\R^{n-1})}^2=o(\norm{w_k}_{H^1}^2).
        \end{align}
        Similarly, we have
        \begin{align}
            \int_{\R^{n-1}}U^{p-1}w_k\mathcal{P}[A]=o(\norm{w_k}_{H^1}^2).
        \end{align}
        To estimate $\int_{\R^{n-1}}UA$, we split the integration domain $\R^{n-1}$ into two parts: $\{2|w_k|\leq U\}$ and $\{2|w_k|\geq U\}$. In the first part, we can expand $|u_k|^{p-1}u_k$ up to third order and derive
        \begin{align}\label{qqq1}
            UA=\frac{p(p-1)}{2}\beta_k^{p-2}U^{p-1}w_k^2+O(\chi_{\{p\geq2\}}U^{p-2}|w_k|^3+|w_k|^{p+1}).
        \end{align}
        In the second part, we can see \eqref{qqq1} also holds pointwisely. Therefore, we deduce that
        \begin{align}\label{vvv6}
            \int_{\R^{n-1}}UA=\frac{p(p-1)}{2}\beta_k^{p-2}\int_{\R^{n-1}}U^{p-1}w_k^2+o(\norm{w_k}_{H^1}^2).
        \end{align}
        Combining the  estimates \eqref{rev1}-\eqref{vvv6} above and using $\beta_k=1+o(1)$, we finally get
		\begin{equation*}
			\begin{aligned}
				\left\Vert \Delta u_k +|u_k|^{p-1}u_k\right\Vert_{H^{-1}}^2={}&(\beta_k-\beta_k^p)^2\norm{U}_{H^1}^2+\norm{w_k}_{H^1}^2-2p\int_{\R^{n-1}}U^{p-1}w_k^2\\
				&+p^2\int_{\R^{n-1}}U^{p-1}w_k\mathcal{P}[U^{p-1}w_k]+o(\norm{w_k}_{H^1}^2).
			\end{aligned}
		\end{equation*}
		Note that
		\begin{equation*}
			\norm{w_k}_{H^1}^2=\norm{\rho_{k,-1}}_{H^1}^2+\sum\limits_{i=2}^{\infty}\norm{\rho_{k,i}}_{H^1}^2,
		\end{equation*}
		\begin{equation*}
			\int_{\R^{n-1}}U^{p-1}w_k^2=\sum\limits_{i=2}^{\infty}\kappa_i^{-1}\norm{\rho_{k,i}}_{H^1}^2,
		\end{equation*}
		\begin{equation*}
			\int_{\R^{n-1}}U^{p-1}w_k\mathcal{P}[U^{p-1}w_k]=\sum\limits_{i=2}^{\infty}\kappa_i^{-2}\norm{\rho_{k,i}}_{H^1}^2,
		\end{equation*}
		where $\kappa_k$ is defined in \eqref{kappa}. Hence we obtain
		\begin{align}\label{dddd}
				\left\Vert \Delta u_k +|u_k|^{p-1}u_k\right\Vert_{H^{-1}}^2={}&(\beta_k-\beta_k^p)^2\norm{U}_{H^1}^2+\norm{\rho_{k,-1}}_{H^1}^2\nonumber\\
				&+\sum\limits_{i=2}^{\infty}(1-p\kappa_i^{-1})^2\norm{\rho_{k,i}}_{H^1}^2+o(\norm{w_k}_{H^1}^2)\nonumber\\
				\geq&(\beta_k-\beta_k^p)^2\norm{U}_{H^1}^2+\norm{\rho_{k,-1}}_{H^1}^2\nonumber\\
				&+(1-p\kappa_2^{-1})^2\sum\limits_{i=2}^{\infty}\norm{\rho_{k,i}}_{H^1}^2+o(\norm{w_k}_{H^1}^2)\nonumber\\
				\geq&(\beta_k-\beta_k^p)^2\norm{U}_{H^1}^2+(1-p\kappa_2^{-1})^2\norm{w_k}_{H^1}^2+o(\norm{w_k}_{H^1}^2).
			\end{align}
		The denominator of the quotient can be evaluated by
		\begin{equation*}
			d(u_k,\mathcal{M}_{\mathrm{E}})^2=\norm{u_k-U}_{H^1}^2=(1-\beta_k)^2\norm{U}_{H^1}^2+\norm{w_k}_{H^1}^2.
		\end{equation*}
		Now the full quotient holds
		\begin{equation}\label{dddd1}
			\frac{\left\Vert \Delta u_k +|u_k|^{p-1}u_k\right\Vert_{H^{-1}}^2}{d(u_k,\mathcal{M}_{\mathrm{E}})^2}\geq \frac{(\beta_k-\beta_k^p)^2\norm{U}_{H^1}^2+(1-p\kappa_2^{-1})^2\norm{w_k}_{H^1}^2+o(\norm{w_k}_{H^1}^2)}{(1-\beta_k)^2\norm{U}_{H^1}^2+\norm{w_k}_{H^1}^2}.
		\end{equation}
		Since $\beta_k-\beta_k^p=(p-1+o(1))(1-\beta_k)$ and $p-1>1-p\kappa_2^{-1}$, \eqref{asympt} follows directly from \eqref{dddd1}.
		
		Assume that the equality in \eqref{asympt} holds, then \eqref{dddd} indicates
		\begin{equation*}
			\norm{\rho_{k,-1}}_{H^1}^2+\sum\limits_{i=3}^{\infty}(1-p\kappa_i^{-1})^2\norm{\rho_{k,i}}_{H^1}^2=o((1-p\kappa_2^{-1})^2\norm{\rho_{k,2}}_{H^1}^2)
		\end{equation*}
		and \eqref{dddd1} implies
		\begin{equation*}
			|1-\beta_k|=o(\norm{w_k}_{H^1}^2).
		\end{equation*}
		These are equivalent to \eqref{asympt1}. The proof is complete. \hfill$\Box$

\vskip0.3in

\noindent{\bf Proof of Theorem \ref{them5}.}
The main idea is to test the quotient with a suitable function. Let $u=U+\varepsilon\rho$ for some $\rho\in R_2,\norm{\rho}_{H^1}=1$ to be determined and $\varepsilon$ sufficiently small. From the implicit function theorem, we know $d(u,\mathcal{M}_{\mathrm{E}})=\norm{u-U}_{H^1}$. Since $|\rho|\leq CU$ by definition, here we can expand the dual norm up to third order:
\begin{equation*}
			\begin{aligned}
				\left\Vert \Delta u +|u|^{p-1}u\right\Vert_{H^{-1}}^2&=\norm{u}_{H^1}^2-2\int_{\R^{n-1}}|u|^{p+1}+\int_{\R^{n-1}}|u|^{p-1}u\mathcal{P}[|u|^{p-1}u],\\
				\norm{u}_{H^1}^2&=\norm{U}_{H^1}^2+\varepsilon^2\norm{\rho}_{H^1}^2=\norm{U}_{H^1}^2+\varepsilon^2,
			\end{aligned}
		\end{equation*}
		\begin{equation*}
			\begin{aligned}
				\int_{\R^{n-1}}|u|^{p+1}=&\int_{\R^{n-1}}U^{p+1}+\frac{p(p+1)}{2}\varepsilon^2\int_{\R^{n-1}}U^{p-1}\rho^2\\
				&+\frac{p(p+1)(p-1)}{6}\varepsilon^3\int_{\R^{n-1}}U^{p-2}\rho^3+o(\varepsilon^3)\\
				=&\norm{U}_{H^1}^2+\frac{p(p+1)}{2}\varepsilon^2\kappa_2^{-1}+\frac{p(p+1)(p-1)}{6}\varepsilon^3\int_{\R^{n-1}}U^{p-2}\rho^3+o(\varepsilon^3),
			\end{aligned}
		\end{equation*}
		\begin{align*}
			&\int_{\R^{n-1}}|u|^{p-1}u\mathcal{P}[|u|^{p-1}u]\\={}&\int_{\R^{n-1}}\left(U^p+pU^{p-1}\varepsilon\rho+\frac{p(p-1)}{2}U^{p-2}\varepsilon^2\rho^2+B\right)\mathcal{P}\left[U^p+pU^{p-1}\varepsilon\rho\right]\\
			&+\int_{\R^{n-1}}\left(U^p+pU^{p-1}\varepsilon\rho+\frac{p(p-1)}{2}U^{p-2}\varepsilon^2\rho^2+B\right)\mathcal{P}\left[\frac{p(p-1)}{2}U^{p-2}\varepsilon^2\rho^2+B\right]\\
			={}&\int_{\R^{n-1}}U^p\mathcal{P}[U^p]+p^2\varepsilon^2\int_{\R^{n-1}}U^{p-1}\rho\mathcal{P}[U^{p-1}\rho]+\frac{p^2(p-1)^2}{4}\varepsilon^4\int_{\R^{n-1}}U^{p-2}\rho^2\mathcal{P}[U^{p-2}\rho^2]\\
			&+\int_{\R^{n-1}}B\mathcal{P}[B]+2\int_{\R^{n-1}}\left(\varepsilon pU^{p-1}\rho+\varepsilon^2\frac{p(p-1)}{2}U^{p-2}\rho^2+B\right)\mathcal{P}[U^p]\\
			&+p^2(p-1)\varepsilon^3\int_{\R^{n-1}}U^{p-2}\rho^2\mathcal{P}[U^{p-1}\rho]+2\int_{\R^{n-1}}\left(\varepsilon pU^{p-1}\rho+\varepsilon^2\frac{p(p-1)}{2}U^{p-2}\rho^2\right)\mathcal{P}[B]\\
			={}&\int_{\R^{n-1}}U^{p+1}+\left(p^2\kappa_2^{-1}+p(p-1)\right)\varepsilon^2\int_{\R^{n-1}}U^{p-1}\rho^2++p^2(p-1)\kappa_2^{-1}\varepsilon^3\int_{\R^{n-1}}U^{p-2}\rho^3\\
			&+2\int_{\R^{n-1}}BU+o(\varepsilon^3)\\
			={}&\int_{\R^{n-1}}U^{p+1}+\left(p^2\kappa_2^{-1}+p(p-1)\right)\varepsilon^2\int_{\R^{n-1}}U^{p-1}\rho^2+p^2(p-1)\kappa_2^{-1}\varepsilon^3\int_{\R^{n-1}}U^{p-2}\rho^3\\
			&+\frac{p(p-1)(p-2)}{3}\varepsilon^3\int_{\R^{n-1}}U^{p-2}\rho^3+o(\varepsilon^3)\\
			={}&\norm{U}_{H^1}^2+\left(p^2\kappa_2^{-1}+p(p-1)\right)\varepsilon^2\kappa_2^{-1}+o(\varepsilon^3)\\
			&+\varepsilon^3\left(p^2(p-1)\kappa_2^{-1}+\frac{p(p-1)(p-2)}{3}\right)\int_{\R^{n-1}}U^{p-2}\rho^3,
		\end{align*}
		where $B=|u|^{p-1}u-U^p-pU^{p-1}\varepsilon\rho-\frac{p(p-1)}{2}U^{p-2}\varepsilon^2\rho^2$. Combining terms above, we can compute
		\begin{equation*}
			\begin{aligned}
				\left\Vert \Delta u +|u|^{p-1}u\right\Vert_{H^{-1}}^2=\varepsilon^2(1-p\kappa_2^{-1})^2+\varepsilon^3\frac{p(p-1)}{3}(\kappa_2^{-1}-1)\int_{\R^{n-1}}U^{p-2}\rho^3+o(\varepsilon^3).
			\end{aligned}
		\end{equation*}
		Note that
		\begin{equation*}
			d(u,\mathcal{M}_{\mathrm{E}})^2=\norm{u-U}_{H^1}^2=\varepsilon^2.
		\end{equation*}
		The full quotient is
		\begin{equation*}
			\frac{\left\Vert \Delta u +|u|^{p-1}u\right\Vert_{H^{-1}}^2}{d(u,\mathcal{M}_{\mathrm{E}})^2}=(1-p\kappa_2^{-1})^2+\varepsilon\frac{p(p-1)}{3}(\kappa_2^{-1}-1)\int_{\R^{n-1}}U^{p-2}\rho^3+o(\varepsilon).
		\end{equation*}
		Since $\kappa_2=\frac{n+2}{n-2}$, it suffices to take a suitable $\rho$ such that $\int_{\R^{n-1}}U^{p-2}\rho^3<0$. Let
		\begin{equation*}
			\rho=-\F^{-1}[y_1y_2+y_2y_3+y_3y_1],
		\end{equation*}
		then
		\begin{equation*}
			\begin{aligned}
				\int_{\R^{n-1}}U^{p-2}\rho^3&=-\left(\frac{n-2}{2}\right)^{n-1}\int_{\B^n}(y_1y_2+y_2y_3+y_3y_1)^3\,dy\\
				&=-\left(\frac{n-2}{2}\right)^{n-1}\int_{\B^n}6y_1^2y_2^2y_3^2\,dy<0.
			\end{aligned}
		\end{equation*}
		The proof is complete.  \hfill$\Box$
        
\vskip0.3in
\noindent{\textbf{
Acknowledgements}}\;\;We would like to thank the anonymous referees for very carefully reading this manuscript and  valuable comments that greatly improve this paper.

\vskip0.3in
\noindent{\Large\textbf{Declarations}}
~\\
~\\
\textbf{Conflict of interest}\quad On behalf of all authors, the corresponding author states that there is no Conflict of interest.~\\
{\textbf{Data Availability Statements}}\quad All data generated or analyzed during this study are included in this article.
\vskip0.5in

	\addcontentsline{toc}{section}{References}
	
\end{document}